\documentclass[11pt]{amsart}
\usepackage{amsfonts,latexsym,amsthm,amssymb,amsmath,amscd,euscript,tikz, tikz-cd}
\usepackage[alphabetic, msc-links, bibtex-style, nobysame]{amsrefs}
\usepackage{stackengine}
\usepackage{framed}
\usepackage{xfrac}
\usepackage[makeroom]{cancel}
\usepackage{faktor}
\usepackage{braket}
\usepackage{pgf,tikz,pgfplots}
\pgfplotsset{compat=1.17}
\usepgfplotslibrary{fillbetween} 
\usepackage{mathrsfs}
\usetikzlibrary{arrows}
\definecolor{wrwrwr}{rgb}{0.3803921568627451,0.3803921568627451,0.3803921568627451}
\definecolor{rvwvcq}{rgb}{0.08235294117647059,0.396078431372549,0.7529411764705882}
\definecolor{mblue}{rgb}{0.2, 0.3, 0.8}
\definecolor{morange}{rgb}{1, 0.5, 0}
\definecolor{mgreen}{rgb}{0.1, 0.4, 0.2}
\definecolor{mred}{rgb}{0.5, 0, 0}
\definecolor{ForestGreen}{RGB}{34,139,34}
\usepackage{hyperref}
\hypersetup{colorlinks=true,citecolor=ForestGreen,linkcolor = blue,urlcolor =black,linkbordercolor={1 0 0}}
\usepackage{float}

\numberwithin{equation}{section}

\usepackage{lmodern}
\usepackage{supertabular}
\usepackage{verbatim}
\usepackage{amssymb}
\usepackage{enumerate}
\usepackage{stmaryrd}
\usepackage{bbm}
\usepackage{mathtools}
\usepackage{microtype}
\usepackage{setspace}
\usepackage{tikz,tikz-cd}
\usetikzlibrary{matrix,calc,positioning,arrows,decorations.pathreplacing,patterns,knots}

\newtheorem{theorem}{{Theorem}}[section]
\newtheorem*{theorem*}{Theorem}
\newtheorem{lemma}[theorem]{Lemma}
\newtheorem{proposition}[theorem]{Proposition}

\newtheorem{corollary}[theorem]{Corollary}
\newtheorem*{corollary*}{Corollary}

\theoremstyle{definition}

\usepackage{lmodern,url,enumerate,mathtools,microtype}
\usepackage[hmargin = 1in,vmargin=1in]{geometry}
\usepackage{graphicx}
\usepackage{subcaption}

\newcommand{\ve}{\varepsilon}

\newcommand{\mr}[1]{{\rm #1}}
\newcommand{\cA}{\mathcal{A}}

\newcommand{\cL}{\mathcal{L}}

\newcommand{\cP}{\mathcal{P}}

\newcommand{\bR}{\mathbb{R}}
\newcommand{\bS}{\mathbb{S}}\newcommand{\bT}{\mathbb{T}}

\newcommand{\bZ}{\mathbb{Z}}

\newcommand{\nc}{\newcommand}

\nc{\on}{\operatorname}
\nc{\p}{\partial}
\nc{\ol}{\overline}
\nc{\ul}{\underline}
\nc{\pa}{\partial}

\nc{\pb}{\partial_b}
\nc{\pc}{\partial_c}
\nc{\pd}{\partial_d}
\nc{\pe}{\partial_e}
\nc{\pf}{\partial_f}
\nc{\pg}{\partial_g}
\nc{\ph}{\partial_h}
\nc{\pari}{\partial_i}
\nc{\pj}{\partial_j}
\nc{\pk}{\partial_k}
\nc{\pl}{\partial_l}
\nc{\pell}{\partial_\ell}
\nc{\parm}{\partial_m}
\nc{\pn}{\partial_n}
\nc{\po}{\partial_o}
\nc{\pp}{\partial_p}
\nc{\pq}{\partial_q}
\nc{\pr}{\partial_r}
\nc{\ps}{\partial_s}
\nc{\pt}{\partial_t}
\nc{\pu}{\partial_u}
\nc{\pv}{\partial_v}
\nc{\pw}{\partial_w}
\nc{\px}{\partial_x}
\nc{\py}{\partial_y}
\nc{\pz}{\partial_z}

\nc{\Spec}{\on{Spec}}

\nc{\sn}{\mr{sn}}
\nc{\cn}{\mr{cn}}
\nc{\dn}{\mr{dn}}

\numberwithin{equation}{section}
\makeatletter
\@addtoreset{equation}{section}
\makeatother

\title{New minimal surfaces in the sphere from capillary minimal cones}
\date{\today}

\author[Benjy Firester]{Benjy Firester}
\address{Department of Mathematics, MIT \newline 
{\href{mailto:benjyfir@mit.edu}{benjyfir@mit.edu}}}
\author[Raphael Tsiamis]{Raphael Tsiamis}
\address{Department of Mathematics, Columbia University \newline
{\href{mailto:r.tsiamis@columbia.edu}{r.tsiamis@columbia.edu}}}

\begin{document}

\begin{abstract}
    For every $p,q\geq 1$, we construct minimal embeddings of $\mathbb{S}^p \times \mathbb{S}^q \times \mathbb{S}^1$ in $\mathbb{S}^{p + q + 2}$ by doubling the links of free-boundary minimal cones in $\mathbb{R}^{p+q+3}$ with bi-orthogonal symmetry. This solves problems posed by Hsiang-Lawson and Hsiang-Hsiang. The equivariance reduces the minimal surface equation to an ODE, and we prove the existence of capillary minimal cones for every contact angle. We obtain free-boundary solutions as limits of capillary surfaces via a singular shooting problem with infinite initial slope. As the contact angle degenerates to $0$, rescalings of the capillary cones converge to a homogeneous solution of the one-phase Bernoulli problem, further illustrating the connection between one-phase free boundaries and minimal surfaces through the capillary functional.
\end{abstract}

\maketitle
\vspace{-0.3cm}

\section{Introduction}

We construct new minimal surfaces in the sphere as minimal embeddings $\bS^p \times \bS^q \times \bS^1 \hookrightarrow \bS^{p+q+2}$, for any $p,q \geq 1$, by doubling the links of free-boundary minimal cones in $\bR^{n+1}_+$.
These free-boundary surfaces arise as limits of families of capillary minimal surfaces constructed by solving a free boundary ODE.
Under suitable rescalings these cones converge, in the shallow-angle regime, to a homogeneous solution of the one-phase Bernoulli problem.
Unlike the examples in~\cite{FTW-1}, our families of free-boundary cones do not enjoy additional reflection symmetries and exhibit different geometric properties.
Topologically, the links are diffeomorphic to $\bS^{n-k-1}\times \bS^{k-1}\times \bS^1$ with isometry group $O(n-k)\times O(k) \times \bZ_2$, and the induced metric can be expressed as a doubly warped product over the $\bS^1$ factor. 
This construction resolves problems posed by Hsiang-Lawson~\cite{hsiang-lawson-jdg}*{Ch.~III \S~3} and Hsiang-Hsiang~\cite{hsiang-hsiang}*{Problem~3} on the existence of minimal submanifolds in the sphere, and advances the Hsiang-Lawson program in low cohomogeneity through a new, self-contained approach.
Our construction also generalizes results of Carlotto-Schulz and Wang-Wang-Zhou~\cites{carlotto-schulz, spherical-bernstein-xin-zhou} to arbitrary pairs of spherical factors.

The construction of minimal hypersurfaces in spheres $\bS^n$ has received great attention, leading to the development of many key techniques in differential geometry.
These include the Hsiang-Lawson framework for the study of minimal submanifolds with large symmetry groups~\cites{hsiang-lawson-jdg , spherical-bernstein, spherical-bernstein-2 , ferus-karcher , free-boundary-mcgrath }, Kapouleas' gluing and desingularization methods~\cites{doubling-clifford-torus, doubling-two-sphere, kapouleas-mcgrath-cpam , kapouleas-wiygul-annalen , choe-soret }, min-max constructions~\cites{ willmore-conjecture , infinitely-many-MN , marques-neves-song , haslhofer-ketover , XinZhouAnnals, WangZhouFourSpheres, song , ko-min-max , spherical-bernstein-xin-zhou }, and the Allen-Cahn approach~\cites{guaraco-phase-transitions , allen-cahn-3-manifolds}.
Separately, White~\cite{white-mapping-degree} constructed complete minimal surfaces asymptotic to a given minimizing cone $\mathbf{C}$ provided that its link is not a homology sphere. 
The topological complexity of the links of cones is connected to their density, as proved by Ilmanen-White and Bernstein-Wang~\cites{ilmanen-white, bernstein-wang} using techniques from mean curvature flow.

Our first result presents the following:

\begin{theorem}\label{thm:MinimalSurfaceExistence}
For every $p, q \geq 1$, there exists a minimal embedding of $\bS^p \times \bS^q \times \bS^1$ in $\bS^{p+q+2}$. 
\end{theorem}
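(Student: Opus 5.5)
The plan follows the strategy announced in the introduction: produce a free-boundary minimal cone $\mathbf{C}$ in the half-space $\bR^{n+1}_+$, with $n=p+q+2$, that is invariant under $O(p+1)\times O(q+1)$ acting on $\bR^{n} = \bR^{p+1}\times\bR^{q+1}$. The constant $\bS^p\times\bS^q\times\bS^1$ minimal embedding is then the double of its link across the equatorial great sphere $\partial \bR^{n+1}_+\cap \bS^{n}$. Write points as $(x,y,z)$ with $x\in\bR^{p+1}$, $y\in\bR^{q+1}$, $z\ge 0$. A cone invariant under the symmetry group is determined by a curve in the orbit space, namely the quarter-sphere of $(|x|,|y|,z)$ with unit norm. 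Parametrize that space by the angle $\theta\in[0,\pi/2]$ between $|x|$ and $|y|$ and by the height angle $\varphi$, and seek the profile as a graph $\varphi=\varphi(\theta)$ or as a general arc. The minimal surface equation for the cone is equivalent to minimality of the link in $\bS^n$. This reduces to a geodesic equation for the metric $\sin^{2p}\!\cdot\cos^{2q}(\text{warping})\,g_{\text{orbit}}$, which is a second-order ODE in the standard Hsiang--Lawson way. I would write this ODE out explicitly.

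The profile curve must start on the boundary $\{z=0\}$ and meet it orthogonally, since that is the free-boundary condition. Its interior should rise into $z>0$, and it should end again orthogonally on $\{z=0\}$. Equivalently, the curve is symmetric under the reflection of the orbit space induced by $z\mapsto -z$. Doubling it then gives a closed geodesic loop in the orbit space of $\bS^n$ that encircles neither singular edge $\{|x|=0\}$ nor $\{|y|=0\}$. Each point of the loop has orbit $\bS^p\times\bS^q$, so the doubled link is $\bS^p\times\bS^q\times\bS^1$. The link meets the equator orthogonally, so the reflected union is $C^1$ and satisfies the minimal surface equation weakly. By elliptic regularity it is a smooth minimal hypersurface, and it is embedded as long as the profile arc is embedded in the open quarter-region.

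The existence of the arc is where the work lies. I would embed the free-boundary problem in the family of capillary problems with contact angle $\gamma\in(0,\pi/2]$. Shoot from the boundary point $\theta_0$ with initial angle $\gamma$, and define a return map that records the angle at which the solution comes back to $\{z=0\}$. A continuity or degree argument in $(\theta_0,\gamma)$ should give capillary cones for each $\gamma$, using symmetric solutions or matching of the two contact angles. The free-boundary case $\gamma=\pi/2$ has infinite initial slope in graph coordinates, so the ODE is singular there. I would treat it either by passing to the limit $\gamma\to\pi/2$ with uniform estimates, or by solving a singular initial value problem as a convergent series in $\sqrt{\theta-\theta_0}$ by a contraction argument. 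The estimates must keep the curve away from the edges $\theta=0,\pi/2$ and keep it embedded. For this I would use comparison with the Clifford-type cones over $\bS^p\times\bS^q$ and monotonicity of an energy or angle function along the ODE.

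The main obstacle is the singular shooting step. One has to show that the limiting curve exists, that it returns to $z=0$ orthogonally rather than hitting a singular edge, and that it does not degenerate into the boundary in the limit, i.e. its height does not go to $0$. I would attack the last point first, by a lower bound on the maximal height obtained from comparison with the Bernoulli-type blow-up regime.
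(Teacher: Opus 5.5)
\textbf{There is a genuine gap: the existence of the arc, which is the whole content of the theorem, is never argued.} Your framework matches the paper's: the equivariant profile curve in the orbit space, orthogonal contact with $\{z=0\}$ as the free-boundary condition, doubling across the equator, and topology read off from a loop avoiding both singular edges. Your series in $\sqrt{\theta-\theta_0}$ is also essentially how the paper handles vertical shots.

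The existence step, however, is deferred to ``a continuity or degree argument should give capillary cones.'' A return map is defined only on initial data whose shots actually come back to $\{z=0\}$. You never say what happens at the ends of the parameter range, or at the boundary of the set where the map is defined. So there is nothing for a continuity or degree argument to act on. If by ``symmetric solutions'' you mean profiles symmetric under $|x|\leftrightarrow|y|$, those exist only when $p=q$. For $p\neq q$ the orbit space has no such reflection, and that is exactly the difficulty the paper must overcome. Likewise, ``comparison with Clifford-type cones and monotonicity of an energy or angle function'' names no specific quantity or inequality.

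\textbf{What is missing is a two-sided dichotomy plus a mechanism that turns the transition into orthogonality.} The paper writes the cone as a graph $z=\rho f(t)$, which makes embeddedness automatic. It shoots from $f(t_1)=0$ with vertical slope $f'(t_1)=\infty$, defined as the $M\to\infty$ limit via the Bernoulli-type inequality $f''<cf'-C(1-f^2)(f')^3$. The argument then has four parts.
\begin{enumerate}[(a)]
\item For $t_1<\ve_n$, the comparison $f'\le((t/t_1)^{k-1}-1)^{-1/2}$ makes the shot small and nearly flat shortly after $t_1$. It then tracks a small multiple of the hypergeometric solution of $\cL_{n,k}f=0$ and returns to zero with finite slope (Proposition~\ref{prop:shoot-infinite-slope}).
\item For $t_1$ within $\bar\ve_n$ of the Clifford value $\sqrt{(k-1)/(n-2)}=\sqrt{q/(p+q)}$, the shot never returns. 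The proof blows up to the limit equation~\eqref{eqn:phi-limit-equation}. A Riccati equation for $L(\tau)$ then shows $\phi-2s\phi'$ must turn negative, contradicting $h_f=f-Af'>0$ (Lemma~\ref{lemma:analysis-of-limit-equation}, Proposition~\ref{prop:small-breather}).
\item The structural facts of Proposition~\ref{prop:unifying-proposition} are: a single maximum, $h_f>0$, and zeros on opposite sides of the Clifford value. Together with the uniform bound $f(\sqrt{(k-1)/(n-2)})\le 2/\sqrt{n}$ for returning shots (Lemma~\ref{lemma:bound-at-sqrt}), they force a single outcome. The only way to lose ``returns with finite slope'' is to return with $f'\to-\infty$ (Lemma~\ref{lemma:convergence-of-solutions}).
\item At the transition point $t_*$, the shot therefore meets $\{z=0\}$ vertically at both ends. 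No angle matching, no two-parameter degree argument, and no limit of capillary cones as $\gamma\to\pi/2$ is needed.
\end{enumerate}

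Your concern about degeneration into the boundary also disappears: $t_*\in(\ve_n,\sqrt{(k-1)/(n-2)}-\bar\ve_n)$ and the initial slope is infinite. The hard obstruction is (b), ruling out returns from near the Clifford cone, not a lower bound on the height. Without analogues of (a), (b), and this transition mechanism, your plan does not yet produce the arc.
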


Theorem~\ref{thm:MinimalSurfaceExistence} is obtained from the study of capillary minimal cones as the other endpoint of a capillary interpolation between free-boundary minimal surfaces and the one-phase problem, first introduced in~\cite{FTW-1}.

\begin{theorem}\label{thm:capillaryConeExistence}
    For all $n \geq 4$, $2 \leq k \leq n-2$, and $\theta \in (0,\tfrac{\pi}{2}]$, there exists a capillary minimal cone $\tilde{\mathbf{C}}_{n,k,\theta}$ that is topologically $C(\bS^{n-k-1}\times \bS^{k-1}\times [0,1])$ with isometry group $O(n-k) \times O(k)$. 
 
    The double of the free-boundary cone $\tilde{\mathbf{C}}_{n,k,\frac{\pi}{2}}$ is a non-isoparametric minimal hypercone $\hat{\mathbf{C}}_{n,k} \subset \bR^{n+1}$.
    Its link $\mathbf{S}_{n,k} \subset \bS^n$ is topologically $\bS^{n-k-1} \times \bS^{k-1} \times \bS^1$, invariant under $O(n-k) \times O(k) \times \bZ_2$.
\end{theorem}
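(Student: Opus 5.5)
We reduce the construction to an ODE problem using the $O(n-k)\times O(k)$ symmetry. Write $\bR^{n+1}_+=\{x_{n+1}\ge 0\}$ and $x=(y,z,t)\in\bR^{n-k}\times\bR^{k}\times\bR_{\ge0}$. A cone invariant under $O(n-k)\times O(k)$ is determined by its trace in the orbit space: the quarter-plane $r=|y|$, $s=|z|$, together with the height $t$. The trace is a cone over a curve $\gamma$ in the spherical octant $\{(r,s,t)\in\bS^2: r,s,t\ge0\}$. Parametrize $\gamma$ by arclength with angle $\phi$ in the $(r,s)$-direction and latitude $\psi$ (so $t=\sin\psi$). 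Minimality of the cone becomes a second-order ODE for $\gamma$, namely the geodesic equation for the conformally weighted metric $r^{2(n-k-1)}s^{2(k-1)}\,g_{\bS^2}$. The boundary conditions are these:
\begin{itemize}
\item $\gamma$ starts and ends on the equator $\{t=0\}$;
\item it meets the equator at angle $\theta$ (the capillary condition);
\item it avoids the axes $r=0$ and $s=0$, which gives the topology $\bS^{n-k-1}\times\bS^{k-1}\times[0,1]$ of the link.
\end{itemize}
We look for $\gamma$ symmetric about neither axis. The reflection symmetry $r\leftrightarrow s$ is only available when $n-k=k$, and this explains why these cones lack the extra symmetries of \cite{FTW-1}.

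For fixed $\theta\in(0,\tfrac\pi2)$ we run a shooting argument. Shoot from an equatorial point at angle $\phi_0\in(0,\tfrac\pi2)$ into the octant with contact angle $\theta$, and follow the weighted geodesic until it first returns to $t=0$. We record the signed discrepancy between the contact angle at the return point and $\theta$. As $\phi_0\to 0$ the curve is pushed by the singular weight toward the $s$-axis, and we will compare it with the explicit solution near the axis, the cone over $\bS^{k-1}$. As $\phi_0\to\tfrac\pi2$ we compare it with the solution near the other axis. We then show the discrepancy has opposite signs at the two ends, and the intermediate value theorem yields a capillary solution. Along the way we need a priori control:
\begin{itemize}
\item the curve returns to the equator before hitting an axis, which we get from a convexity or Sturm comparison argument using the Simons-type cone $\{r^2/(n-k-1)=s^2/(k-1)\}$ as a barrier;
\item the curve is embedded.
\end{itemize}

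The free-boundary case $\theta=\tfrac\pi2$ is where the shooting becomes singular: perpendicular contact means infinite initial slope when $\gamma$ is written as a graph over the equator. We therefore obtain $\tilde{\mathbf C}_{n,k,\pi/2}$ as a limit $\theta_j\to\tfrac\pi2$ of the capillary curves. This requires uniform estimates: curvature bounds on compact subsets away from the axes, and a uniform lower bound on the distance to the axes. With these, Arzelà–Ascoli and continuous dependence for the ODE pass the boundary angle to the limit. I expect this step, keeping the limit curve nondegenerate, to be the main obstacle. We must rule out collapse onto the equator, and also rule out convergence to a union of the planar cone and a cone over an axis. The first thing I would try is an energy (density) bound together with a monotone quantity along the ODE, for instance the weighted angular momentum $r^{n-k-1}s^{k-1}\cos(\text{angle})$, which separates the limit from the degenerate configurations.

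Finally, reflect across $\{t=0\}$. Because the contact is perpendicular, the doubled curve is a smooth closed weighted geodesic, so its cone $\hat{\mathbf C}_{n,k}$ is smooth away from the origin and minimal. Its link is $\bS^{n-k-1}\times\bS^{k-1}$ times a circle, and it carries the isometries $O(n-k)\times O(k)\times\bZ_2$. To see that $\hat{\mathbf C}_{n,k}$ is non-isoparametric, note that the isoparametric hypersurfaces with these orbit types are the Clifford-type products, whose orbit-space curves are single points (orbits). Since $\mathbf S_{n,k}$ has cohomogeneity one over a nontrivial curve, it has nonconstant principal curvatures; for example, the curvature of $\gamma$ varies, because $\gamma$ meets $t=0$ and is not a great circle.
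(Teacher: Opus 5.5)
\textbf{There is a genuine gap in the shooting step.} Your reduction to weighted geodesics of $r^{2(n-k-1)}s^{2(k-1)}g_{\bS^2}$ in the orbit octant is correct. The problem is the claimed sign change: the discrepancy does \emph{not} have opposite signs as $\phi_0\to 0$ and $\phi_0\to\frac{\pi}{2}$.

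\emph{The end where the curve leaves the nearby axis.} Fix a shooting direction. At one end the curve moves away from the nearby axis. It flattens at small height and then follows a small multiple of the Jacobi field regular at that axis; in the variable $t=s/\sqrt{r^2+s^2}$ this is ${}_2F_1(\frac{n-1}{2},-\frac{1}{2};\frac{k}{2};t^2)$. So it returns with contact angle tending to $0$. This is Proposition~\ref{prop:shoot-infinite-slope} together with the estimate $R[t_1]>0$ in Corollary~\ref{cor:shooting-matching}.

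\emph{The end where the curve heads into the nearby axis.} Blown up, the curve is a generalized-catenoid profile for the weight of that axis. It necks, turns back, flattens at height $o(1)$, and then follows a small multiple of the Jacobi field regular at that axis, ${}_2F_1(\frac{n-1}{2},-\frac{1}{2};\frac{n-k}{2};1-t^2)$. That function also vanishes with small slope, so the return angle again tends to $0$. Moreover, for such an overturned curve the wetted angle at the starting point is $\pi-\theta$, not $\theta$.

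Both ends therefore give the same sign, and the intermediate value theorem yields nothing. If you switch to an oriented arrival angle, it does run from $0$ to $\pi$. But then you need the return map to be defined and continuous for every $\phi_0$, including overturned geodesics that may approach an axis again, and you need embeddedness at the crossing. Nothing in your proposal supplies this.

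\textbf{The missing idea is an interior threshold, not a comparison of the two ends.} The paper writes the cone as $z=\rho f(t)$ and shoots graphically. Proposition~\ref{prop:unifying-proposition}$(ii)$, proved with $h_f=f-Af'$ and Lemma~\ref{lemma:change-of-variables}, shows that every positive graphical solution with two zeros satisfies $t_1<\sqrt{\frac{k-1}{n-2}}<t_2$. That meridian is exactly your Simons-type cone, but its role is the opposite of the one you assign it: it separates the two contact points.
\begin{itemize}
\item Shots starting at or beyond it never return as graphs, uniformly in the slope (Proposition~\ref{prop:small-breather}).
\item Shots from $t_1$ near $0$ return with small angle.
\item By the compactness of Lemmas~\ref{lemma:slope-infinity-solution} and~\ref{lemma:convergence-of-solutions}, the graphical return can be lost at the supremum $t_*$ only through a vertical return. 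Hence $R[t_1]\to-\infty$ as $t_1\uparrow t_*$.
\end{itemize}
The intermediate value theorem is then applied on $(0,t_*)$, and embeddedness is automatic because the profiles are graphs.

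\textbf{The free-boundary case.} The same threshold argument with vertical initial slope gives the free-boundary cone directly, with both contacts vertical. No limit $\theta_j\to\frac{\pi}{2}$ is needed. In your arclength formulation the orthogonal shot is not even singular, so the obstacle you flag is not where the difficulty lies.

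\textbf{Two smaller points.}
\begin{itemize}
\item $r^{n-k-1}s^{k-1}\cos(\cdot)$ is not a first integral, since no rotation of $\bS^2$ preserves the weight. This lack of symmetry is why the planar Hsiang--Lawson methods break down here, and you give no reason for monotonicity.
\item Your non-isoparametric argument is wrong as stated. The orbit space is two-dimensional, so every invariant hypersurface has a curve, not a point, as profile. For example, $\bS^{n-k-1}(r_1)\times\bS^{k}(r_2)$ is $O(n-k)\times O(k)$-invariant and isoparametric, with an arc as profile.
\end{itemize}
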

The resulting minimal submanifolds of $\bS^n$ are non-isoparametric, meaning that they have non-constant principal curvatures.
Isoparametric minimal surfaces in the sphere have been classified using algebraic and analytic techniques, cf.~\cites{cecil-chi-jensen, miyaoka, chi-four-curvatures}. 
Theorem~\ref{thm:MinimalSurfaceExistence} is also connected to questions posed by Choe-Fraser~\cite{choe-fraser}*{\S~4} on the possible topology and diffeomorphism types of minimal submanifolds inside manifolds with positive Ricci curvature.

A minimal surface $\Sigma \subset \bS^n$ with reflection symmetry $p_{n+1} \leftrightsquigarrow -p_{n+1}$, restricted to the upper half-space $\bR^{n+1}_+$ and $\bS^n_+$, forms a free-boundary surface. 
Conversely, since the boundary of $\bS^n_+$ is totally geodesic, any free-boundary surface can be doubled to produce an embedded minimal surface in the sphere.
Therefore, when viewed as a graph, any such free-boundary minimal surface must have infinite slope along the boundary equator, making the graphical shooting approach singular in these coordinates.
Using a bi-orthogonal symmetry, the capillary minimal surface equation reduces to a free boundary ODE.
For capillary angles $\theta \in (0,\tfrac{\pi}{2})$, shooting techniques that match the contact angle are well-defined, and we show that the resulting solutions converge uniformly to an infinite-slope shot.
This yields the free-boundary minimal surface in Theorem~\ref{thm:MinimalSurfaceExistence}.

The most symmetric case of Theorem~\ref{thm:MinimalSurfaceExistence} where $n=2k$ (hence $n-k-1=k-1$), recovers the minimal hypertori produced by Carlotto-Schulz~\cite{carlotto-schulz}.
Using numerical methods, Perdomo approximated the spectrum and index of the Carlotto-Schulz examples~\cites{perdomo-spectrum, perdomo-carlotto-schulz}.
Analogues of this construction for constant mean curvature hypersurfaces are discussed in~\cites{huang-wei-2022 , perdomo-navigating}.
Very recently, Wang-Wang-Zhou~\cite{spherical-bernstein-xin-zhou} provided an alternative construction of embedded spheres $\bS^3$ and hypertori $\bS^1 \times \bS^1 \times \bS^1$ inside the $4$-sphere, using equivariant min-max theory.

As a consequence of our general technique, we extend the results of~\cites{carlotto-schulz, spherical-bernstein-xin-zhou} to the capillary setting as follows.
\begin{theorem}\label{thm:carlotto-schulz}
    For every $k \geq 2$, there exists a smooth one-parameter family of capillary cones $\{ \bar{\mathbf{C}}_{k,a} \}_{a \in (0,a^*_k] } \subset \bR^{2k+1}$ that interpolates between a free-boundary minimal cone and a homogeneous solution of the one-phase problem through cones of every angle.
    More precisely,
    \begin{enumerate}[(i)]
        \item As $a \downarrow 0$, the rescaled cones $\frac{1}{a} \bar{\mathbf{C}}_{k,a}$ converge in $C^{\infty}_{\textup{loc}}$ to the graph of a homogeneous solution of the one-phase problem~\eqref{eqn:one-phase-problem}.
        \item As $a \in (0,a^*_k]$, the cones $\bar{\mathbf{C}}_{k,a}$ attain every capillary angle $\theta \in (0,\frac{\pi}{2}]$.
        \item The cone $\bar{\mathbf{C}}_{k,a^*_k}$ is the free-boundary cone $\hat{\mathbf{C}}_{2k,k,\frac{\pi}{2}}$, whose double has link given by a minimal embedded $\bS^{k-1} \times \bS^{k-1} \times \bS^1 \hookrightarrow \bS^{2k}$.
    \end{enumerate}
\end{theorem}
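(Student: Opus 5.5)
We reduce Theorem~\ref{thm:carlotto-schulz} to an ODE shooting problem, using the extra symmetry available when $n=2k$. We write points of $\bR^{2k+1}_+$ as $(x,y,z)$ with $x,y\in\bR^k$, $z\geq 0$, and use the invariants $r=|x|$, $s=|y|$. An $O(k)\times O(k)$-invariant cone is then determined by its profile curve in the quarter-sphere $\{r,s\geq 0,\ z\geq 0,\ r^2+s^2+z^2=1\}$. When $n=2k$ the reflection $r\leftrightarrow s$ is an additional symmetry, so we look for profiles symmetric about the diagonal $r=s$.

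We parametrize the profile as a graph $z=u(\phi)$ over the angle $\phi=\arctan(s/r)$, with $\phi\in(\phi_-,\phi_+)$, symmetric about $\pi/4$, and with boundary on $\{z=0\}$. The capillary minimal-cone equation then becomes a second-order ODE in $u$. The family parameter is the height $a=u(\pi/4)$, with initial slope $u'(\pi/4)=0$ imposed by the symmetry. We shoot from $\pi/4$ until $u$ first vanishes at some $\phi_+(a)$, define $\phi_-=\pi/2-\phi_+$ by reflection, and let $\theta(a)$ be the contact angle determined by $u'(\phi_+)$ after accounting for the cone geometry. Since the solution depends smoothly on $a$ and the zero is transversal, this gives a smooth one-parameter family $\bar{\mathbf{C}}_{k,a}$.

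The proof of the three items then proceeds as follows.

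For item (i), we rescale $u=a v$. As $a\downarrow 0$, the minimal surface operator linearizes to the Laplacian on the cone. The rescaled profiles $v$ converge in $C^\infty_{\rm loc}$ to the solution of the linear ODE with $v(\pi/4)=1$, $v'(\pi/4)=0$, namely the homogeneous degree-one harmonic function $\Delta v=0$ on an $O(k)\times O(k)$-invariant cone with $|\nabla v|=$ const on its zero set. This is the homogeneous one-phase solution of \eqref{eqn:one-phase-problem}. In this limit the contact angle is of order $a$, so $\theta(a)\to 0$.

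For items (ii) and (iii), $\theta$ is continuous in $a$, and we define $a^*_k$ as the first height at which the slope at the zero becomes infinite, so that $\theta\to\pi/2$. By the intermediate value theorem every angle in $(0,\pi/2]$ is then attained. Identifying the limit with $\hat{\mathbf{C}}_{2k,k,\frac{\pi}{2}}$ follows from the construction in Theorem~\ref{thm:capillaryConeExistence}, where the free-boundary cone was obtained as the limit of capillary solutions with infinite initial slope. Doubling the limit cone across $\{z=0\}$ gives the stated embedded link.

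The main obstacle is proving that $a^*_k$ exists, i.e.\ that the slope at the zero blows up at a finite height, and that the shot reaching infinite slope is genuinely a smooth free-boundary solution. Two things can go wrong along the way:
\begin{itemize}
\item the solution may stop hitting $z=0$ before the contact angle reaches $\pi/2$;
\item the solution may degenerate into the singular locus $r=0$ or $s=0$.
\end{itemize}
To control both, we would first switch near the boundary to a graph $\phi=\phi(z)$, in which the free-boundary condition is regular. We would then use comparison with the Simons-type cones $r=s$ and with spherical barriers to obtain a priori bounds keeping $\phi_+$ away from $\pi/2$. Finally, we would appeal to the uniform convergence of the capillary shots to the infinite-slope shot, as in Theorem~\ref{thm:capillaryConeExistence}, to conclude smoothness of the limit.
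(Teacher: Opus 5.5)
Your skeleton matches the paper's. You shoot from the symmetric maximum $t=\tfrac{1}{\sqrt2}$ (your $\phi=\pi/4$) with $f(\tfrac1{\sqrt2})=a$, $f'(\tfrac1{\sqrt2})=0$. You reflect by $t\leftrightsquigarrow\sqrt{1-t^2}$ so the two contact angles agree, linearize as $a\downarrow0$, and close with the intermediate value theorem.

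The genuine gap is the step you yourself call the main obstacle. Nothing in your plan shows that the shots ever \emph{stop} reaching $\{z=0\}$, i.e.\ that $a^*_k:=\sup\{a: f_a \text{ reaches a zero}\}$ is finite. That finiteness drives the argument: zeros with finite slope persist under perturbation of $a$, so a continuity argument can force a vertical contact only at a finite supremum. Defining $a^*_k$ as ``the first height at which the slope becomes infinite'' presupposes exactly this.

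The tools you list do not supply it:
\begin{enumerate}[(a)]
\item Recasting the profile as $\phi=\phi(z)$ near the boundary only regularizes the passage through a vertical contact once one exists; the paper does the analogous thing with $u=f^2$ in Lemma~\ref{lemma:convergence-of-solutions}.
\item Comparison with $\{r=s\}$ and with spherical barriers concerns the location of $\phi_+$, not the height $a$. In the orbit space $\{r=s\}$ is the very arc $\phi=\pi/4$ that your shots cross horizontally.
\item The convergence in Lemma~\ref{lemma:slope-infinity-solution} is about initial slope $M\to\infty$ at a prescribed zero, not the terminal slope of your symmetric family.
\end{enumerate}
Without an upper bound on $a$, you cannot exclude that every $f_a$ meets $\{z=0\}$ with finite slope while $\theta(a)$ increases to a limit below $\pi/2$, or tends to $\pi/2$ only as $a\to\infty$. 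In either case (ii) and (iii) fail.

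The missing idea is the a priori height bound of Lemma~\ref{lemma:bound-at-sqrt}, which is especially clean in your symmetric setting. At $t_\alpha=\tfrac1{\sqrt2}$ we have $A(t_\alpha)=0$ and $f'(t_\alpha)=0$. Hence $\Psi:=f(f-Af')-\tfrac{1}{2k-2}$ satisfies $\Psi'(t_\alpha)=0$ and $\Psi''(t_\alpha)=-2ff''(t_\alpha)=4(2k-1)a^2>0$. At any critical point $t_*>t_\alpha$ before the zero, \eqref{eqn:Psi-at-critical-point} gives $\on{sgn}\Psi''(t_*)=\on{sgn}\Psi(t_*)$, because $(-f')(f-Af')>0$ and $A>0$ there. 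So if $a^2>\tfrac{1}{2k-2}$, $\Psi$ stays positive and increasing. But $\Psi=-\tfrac{1}{2k-2}$ at a zero with finite slope, and $\Psi\to0$ at a vertical zero (by \eqref{eqn:f(t)f'(t)-expansion}, $-Aff'\to\tfrac{1}{2k-2}$). Hence $a^*_k<\infty$.

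Two smaller points:
\begin{enumerate}[(a)]
\item In (i) you assume the linear limit $v$ vanishes before the singular locus $\phi=\pi/2$. This is needed even to know that $f_a$ reaches $\{z=0\}$ for small $a$. The paper proves it by writing $v=c^{-1}\bigl(F(t)+F(\sqrt{1-t^2})\bigr)$, with $F$ the hypergeometric solution regular at $t=0$, which tends to $-\infty$ as $t\uparrow1$.
\item (iii) does not follow from ``the construction in Theorem~\ref{thm:capillaryConeExistence}'' without a uniqueness result. What you actually need is only that the vertical-contact symmetric shot is itself a free-boundary cone; its double then has the stated link by Section~\ref{section:topology}.
\end{enumerate}
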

This theorem provides further evidence for the importance of the capillary functional in the study of minimal submanifolds, as it provides a bridge between minimal surfaces and solutions of the one-phase problem~\eqref{eqn:one-phase-problem}.
The first instance of such an interpolating family was discovered in~\cite{FTW-1}.

The general case of $n \neq 2k$ in Theorem~\ref{thm:capillaryConeExistence} (corresponding to $p \neq q$ in Theorem~\ref{thm:MinimalSurfaceExistence}) is more challenging than the case $n=2k$ ($p=q$), which enjoys an additional symmetry.
In the low cohomogeneity framework of Hsiang, Hsiang, and Lawson~\cites{hsiang-lawson-jdg , hsiang-hsiang }, the existence of solutions with certain symmetries was obtained via the study of planar dynamical systems; however, their technique encounters difficulties in the current setting, as discussed in~\cite{carlotto-schulz}.
Carlotto-Schulz obtain their surfaces by studying a non-planar, $3 \times 3$ ODE system.
The shooting method was also used in Angenent's construction of self-shrinking tori~\cite{angenent-tori}.

The main part of our paper is devoted to developing a novel, singular shooting method for graphical solutions of the capillary problem, allowing us to study the minimal surfaces and cones in question via single-variable ODE methods.
Notably, this provides an effective method to approximate these surfaces, which is often inaccessible through pure variational methods.
The existence of a minimally embedded $\bS^{k-1} \times \bS^{k-1} \times \bS^1 \hookrightarrow \bS^{2k}$, connected to a homogeneous one-phase solution by a smooth interpolating family of capillary hypersurfaces, follows using only Lemma~\ref{lemma:change-of-variables} (symmetry under $t \leftrightsquigarrow \sqrt{1-t^2}$) and Lemma~\ref{lemma:bound-at-sqrt} (a uniform bound for shots reaching zero) to prove Theorem~\ref{thm:carlotto-schulz}.
The constructions of Theorems~\ref{thm:MinimalSurfaceExistence} and~\ref{thm:capillaryConeExistence} are also connected to new $O(n-k) \times O(k)$-invariant homogeneous one-phase solutions constructed and studied in a related paper~\cite{six-way}.

\section{Construction of capillary surfaces}\label{section:construction-capillary}
We adopt many conventions from~\cite{FTW-1}.
Let $n \geq 4$ and $2 \leq k \leq n-2$, and decompose $\bR^{n+1} = \bR^{n-k}_x \oplus \bR^{k}_y \oplus \bR_z$.
For the capillary problem, we consider the upper half-space $\bR^{n+1}_+$ with boundary $\Pi := \{z = 0\} = \partial \{z \geq 0 \}$, so we assume $z \geq 0$ throughout.
The $O(n-k)\times O(k)$-action on the $(x,y)$-variables reduces the minimal surface equation to an ODE, where a natural equivariant coordinate system for the analysis is $\rho := \sqrt{|x|^2 + |y|^2}$ and $t := \frac{|y|}{\sqrt{|x|^2 + |y|^2}}$.
As computed in~\cite{FTW-1}, a graphical surface $\mathbf{C} = \{ z = U(|x|,|y|) > 0\}$ is a minimal cone if and only if $U = \rho f(t)$, where $f(t)$ satisfies the ODE
\begin{equation}\label{eqn:ode-star}\tag{$\star$}
    (1-t^2) f'' + (f - tf') + (n-2) \left( 1 + (1-t^2) \frac{(f')^2}{1 + f^2} \right) (f - A_{n,k} f') = 0,
\end{equation}
where $A_{n,k}(t) := t - \frac{k-1}{n-2} t^{-1}$.
The boundary angle for $\mathbf{C}$ at a point $t_*$ where $f(t_*) = 0$ is given by
\begin{equation}\label{eqn:boundaryAngleTerm}
 \theta = \on{arctan} (\sqrt{1 - t_*^2} \, |f'(t_*)|)
\end{equation}
A useful phenomenon in the study of families of capillary solutions is that, as the contact angle $\theta \downarrow 0$, the rescaled functions $\frac{1}{\tan \theta} f_{\theta}$ converge (subsequentially) to a multiple of a solution of the one-phase Bernoulli problem; this idea was utilized in~\cites{improved-regularity,FTW-1}.
For $\lambda \in (0,\infty)$ and $f$ a solution of~\eqref{eqn:ode-star}, the rescaled function $\frac{1}{\sqrt{\lambda}} f$ is a solution of the equation with parameter
\begin{equation}\label{eqn:capillary-ODE} \tag{$\star_\lambda$}
    (1-t^2) f'' + (f - tf') + (n-2) \left( 1 + (1-t^2) \frac{\lambda (f')^2}{1 + \lambda f^2} \right) (f - A_{n,k} f') = 0.
\end{equation}
Motivated by the above limiting behavior, we will study equation~\eqref{eqn:capillary-ODE} uniformly in $\lambda \in [0,\infty]$.
Then, equation~\eqref{eqn:capillary-ODE} includes the one-phase problem in the limit $\lambda \downarrow 0$; the resulting homogeneous solutions are further analyzed in~\cites{FTW-1 , FTW-stability-one-phase , six-way}.
When $\lambda = 0$, the problem~\eqref{eqn:capillary-ODE} specializes to the hypergeometric Legendre operator associated to $p(t) := t^{k-1} (1-t^2)^{\frac{n-k}{2}}$, 
\begin{equation}\label{eqn:legendre-form}
\cL_{n,k} f = (1-t^2) f'' + (n-1) (f - t f') + (k-1) t^{-1} f' = \tfrac{1-t^2}{p(t)} \left[ (p f')' + (n-1) \tfrac{p}{1-t^2} f \right].
\end{equation}
The solutions of $\cL_{n,k} f_0 = 0$ have the significance that $U(x,y) := \rho f_0(t)$ is harmonic in $\bR^n$, while the analogue of the boundary condition~\eqref{eqn:boundaryAngleTerm} implies that $|\nabla U| =1$ along $\partial \{ U > 0 \}$.
Consequently, $U$ is a solution of the \textit{one-phase Bernoulli} problem
\begin{equation}\label{eqn:one-phase-problem}\tag{OP}
        \Delta u = 0 \; \text{ in } \; \{ u > 0 \} 
        \qquad \text{and}\qquad 
        |\nabla u| = 1 \; \text{ on } \; \partial \{ u > 0 \}.
\end{equation}
This problem is closely connected to the theory of minimal surfaces, with many constructions and theorems having direct counterparts.
In low dimensions, classical methods such as the Weierstrass representation and gluing techniques have led to classification results and new examples~\cites{ entire-hairpins, traizet , jerison-kamburov, n-dim-catenoid ,  hines-kolesar-mcgrath}.
We refer the reader to~\cites{ one-phase-simon-solomon , FTW-1 , FTW-stability-one-phase } for some recent results on the one-phase problem and its connections to the theory of minimal surfaces.
As in~\cite{FTW-1}, Theorems~\ref{thm:capillaryConeExistence} and~\ref{thm:carlotto-schulz} deepen these connections by bridging minimal surfaces and one-phase solutions through interpolating families of capillary surfaces.

\subsection{Topology of cones in low cohomogeneity}\label{section:topology}

The coefficients $(1-t^2)$ and $A_{n,k}(t)$ of equation~\eqref{eqn:ode-star} produce singularities at $t=0$ and $t=1$, so any regular solution of this equation~\eqref{eqn:capillary-ODE} either satisfies $f'(0) = 0$ or is defined on a sub-interval $[t_1, t_2] \subset (0,1)$.
The symmetry of the equation under exchanging $k \leftrightsquigarrow n-k$ and $\hat{f}(t) := f(\sqrt{1-t^2})$, which corresponds to relabeling the ambient variables $(x,y) \in \bR^{n-k} \times \bR^k$, ensures that these are the only two possibilities up to ambient isometry, see Lemma~\ref{lemma:change-of-variables}.
In particular, the graph of $U = \rho f(t)$ defines a smooth, complete minimal cone with boundary contained in $\Pi$ if and only if $(i)$ either $f$ is even with $f'(0) = 0$ and $f$ attains a zero at $\pm t_0$ for $0 < t_0 < 1$, or $(ii)$ $f$ has two zeros $0 < t_1 < t_2 < 1$. 
The first case for cones with this symmetry group is treated in~\cite{FTW-1}, and we consider the second case in this paper.
The capillary minimal cone problem for solutions of type $(ii)$ is equivalent to finding a free boundary solution $f$ to equation~\eqref{eqn:ode-star} satisfying $\sqrt{1-t_1^2}|f'(t_1)| = \sqrt{1 - t_2^2}|f'(t_2)| = \theta$.
We will approach this as an ODE shooting problem to find the appropriate $t_1$ for each $\theta$. 

The capillary minimal cone $\mathbf{C} = \text{graph}(\rho f(t))$ admits the spherical parametrization
\[
F(\rho, t , \xi , \eta) = \left( \rho \sqrt{1-t^2}  \xi, \rho t  \eta , \rho f(t)\right) , \qquad \xi \in \bS^{n-k-1}, \; \eta \in \bS^{k-1},
\]
for which $|F(\rho, t , \xi, \eta)|^2 = \rho^2 ( 1+ f(t)^2)$.
Consequently, the link of the capillary cone is a $O(n-k) \times O(k)$-invariant surface $\Sigma \subset \bS^n_+$ inside the upper hemisphere with parametrization
\begin{equation}\label{eqn:link-equation}
    \Sigma := \left\{ \left( \frac{\sqrt{1-t^2}}{\sqrt{1+f^2}} \xi, \frac{t}{\sqrt{1+f^2}} \eta, \frac{f}{\sqrt{1+f^2}} \right) : (\xi, \eta, t) \in \bS^{n-k-1} \times \bS^{k-1}\times I_\theta \right\} \subset \bS^n_+
\end{equation}
where $I_{\theta} \subset [0,1]$ denotes the positive phase of $f$.
For cones of type $(ii)$, the resulting function $U = \rho f(t)$ corresponds to a graph over a conical annulus in the base, of the form
\[
\Gamma := \{ (x,y) : t_1 \rho < |y| < t_2 \rho \}
\]
with two boundary components on $\Pi = \{ p_{n+1} \geq 0 \}$, namely the two cones $\{ |y| = t_i \rho \}$.
Topologically, the link of such a cone is diffeomorphic to $\bS^{n-k-1} \times \bS^{k-1} \times I_\theta$.
We denote such a capillary cone with contact angle $\theta$ by $\tilde{\mathbf{C}}_{n,k,\theta}$, in parallel with the cones $\mathbf{C}_{n,k,\theta}$ of type $(i)$, constructed in~\cite{FTW-1}*{Theorem 1.3}.
By construction, $\tilde{\mathbf{C}}_{n,k,\theta}$ is a regular capillary cone with an isolated singularity at the origin.

When $\theta = \frac{\pi}{2}$, the cone $\mathbf{C}$ is called a \textit{free-boundary minimal cone} and may be doubled across $\Pi$ to produce a complete minimal hypercone $\hat{\mathbf{C}}_{n,k}$ in $\bR^{n+1}$.
The link of this cone is obtained by doubling $\Sigma$ as in~\eqref{eqn:link-equation} across the equator of $\bS^n_+$, thereby producing a closed, $O(n-k) \times O(k) \times \bZ_2$-invariant minimal hypersurface $\mathbf{S}_{n,k}$ of the sphere, which is topologically $\bS^{n-k-1}\times \bS^{k-1} \times \bS^1$.
This symmetry reduces $\mathbf{S}_{n,k}$ to a one-dimensional profile curve in the orbit space (a two-dimensional spherical wedge), while the property $[t_1, t_2] \subset (0,1)$ ensures that the profile curve does not intersect the symmetry axis.
Therefore, it is diffeomorphic to $\bS^{n-k-1} \times \bS^{k-1} \times \bS^1$: the first two factors arise from the group orbits, while the $\bS^1$-factor comes from closing the interval $[t_1, t_2]$ into a $\bZ_2$-invariant loop by doubling across the equator.
Notably, the cone $\hat{\mathbf{C}}_{n,k} = C( \mathbf{S}_{n,k})$ has an $O(n-k) \times O(k) \times \bZ_2$ symmetry that does not integrate to the full $O(n-k) \times O(k+1)$-invariance of the Lawson cone, and carries different topological structure in its link.
In particular, $\hat{\mathbf{C}}_{n,k}$ is not isoparametric.

\subsection{Analysis of the capillary equation}
The existence of the cones $\tilde{\mathbf{C}}_{n,k,\theta}$ will be obtained by analyzing the capillary equation~\eqref{eqn:ode-star} and its rescaled version~\eqref{eqn:capillary-ODE}, for general $\lambda \in [0,\infty]$.

\begin{figure}[t]
  \centering
  \includegraphics[scale = 0.8]{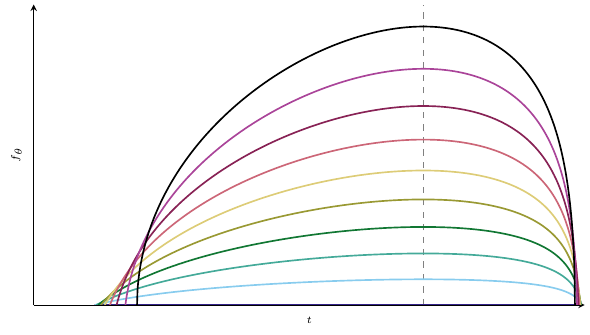}
  \includegraphics[scale = 0.8]{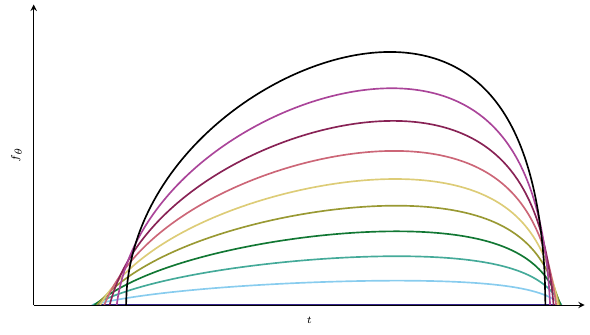}
  \caption{The figures show the graphs of the functions $f_\theta$ for angles $\theta$ ranging from $\ve$ to $\tfrac{\pi}{2}$, with the free-boundary profile $f_{\frac{\pi}{2}}$ in black.
  The left figure illustrates $(n,k) = (4,2)$, when each profile function is symmetric under $t \leftrightsquigarrow \sqrt{1-t^2}$, with a peak at $t = \tfrac{1}{\sqrt{2}}$.
  The right figure illustrates $(n,k) = (5,2)$, which does not have such a symmetry.
  As $\theta \downarrow 0$, the rescaled functions $\frac{1}{\theta} (\rho f_{\theta})$ converge in $C^{\infty}_{\text{loc}}$ to a homogeneous solution of the one-phase problem.}
  \label{fig:interpolations}
\end{figure}

\begin{lemma}\label{lemma:change-of-variables}
For given $(n,k)$, the function $f(t)$ solves equation~\eqref{eqn:capillary-ODE} if and only if the function $f (\sqrt{1-t^2})$ solves this equation for $(n,n-k)$.
Any capillary cones produced by solving this equation with the appropriate free boundary are isometric via the involution $t \leftrightsquigarrow \sqrt{1-t^2}$; in particular, they form the same contact angle.
\end{lemma}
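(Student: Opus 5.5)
The plan is to prove the first assertion by a direct change of variables, and to explain it conceptually by the ambient isometry that swaps the factors $\bR^{n-k}_x$ and $\bR^k_y$.

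\textbf{Conceptual picture.} The ODE~\eqref{eqn:capillary-ODE} is the reduction of the (rescaled) minimal surface equation for $z = \rho f(t)$ with $t = |y|/\rho$. Relabeling $(x,y)\mapsto(y,x)$ is an isometry of $\bR^{n+1}$ that fixes $\Pi$ and $\rho$. It exchanges the roles of $k$ and $n-k$ and replaces $t$ by $|x|/\rho = \sqrt{1-t^2}$. Since the minimal surface equation, and its rescaling by $\sqrt{\lambda}$, is invariant under isometries preserving $z$, the transformed profile must solve the $(n,n-k)$ equation. I will nevertheless verify this by hand, so as not to depend on the derivation in~\cite{FTW-1}.

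\textbf{Direct computation.} Set $s=\sqrt{1-t^2}$ and $g(t)=f(s)$, so that $ds/dt=-t/s$.
\begin{enumerate}[(a)]
\item The derivatives are
\[
g'=-\tfrac{t}{s}f'(s), \qquad g''=\tfrac{t^2}{s^2}f''-\Big(\tfrac1s+\tfrac{t^2}{s^3}\Big)f'.
\]
\item The principal part transforms as
\[
(1-t^2)g''+(g-tg') = (1-s^2)f''+f-sf',
\]
using $s^2+t^2=1$.
\item The nonlinear gradient factor is invariant, because $(1-t^2)(g')^2=t^2(f')^2=(1-s^2)(f')^2$ and $\lambda g^2=\lambda f^2$.
\item For the last factor, use $A_{n,n-k}(t)=t-\frac{n-k-1}{n-2}t^{-1}$. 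Then
\[
g - A_{n,n-k}(t)g' = f+\frac{t^2-\frac{n-k-1}{n-2}}{s}f' = f+\frac{\frac{k-1}{n-2}-s^2}{s}f' = f-A_{n,k}(s)f'.
\]
\end{enumerate}
Hence the $(n,n-k)$ operator applied to $g$ at $t$ equals the $(n,k)$ operator applied to $f$ at $s$. This gives both implications, since the map $t\mapsto\sqrt{1-t^2}$ is an involution of $(0,1)$.

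\textbf{Cones and contact angle.} Zeros $t_*$ of $f$ correspond to zeros $\sqrt{1-t_*^2}$ of $g$. The order of the zeros is reversed, and the singular endpoints $t=0$ and $t=1$ are swapped. The angle formula~\eqref{eqn:boundaryAngleTerm} is preserved, because
\[
\sqrt{1-t^2}\,|g'(t)| = t\,|f'(s)| = \sqrt{1-s^2}\,|f'(s)|.
\]
On the level of cones, the graphs of $\rho f(t)$ and $\rho g(t)$ are images of each other under the coordinate swap $(x,y,z)\mapsto(y,x,z)$. This is visible in the parametrization $F$, which simply exchanges $\sqrt{1-t^2}\,\xi$ with $t\,\eta$. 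So the two cones are isometric and meet $\Pi$ at the same angle.

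The only real obstacle is bookkeeping in step (d): one must check that the $t^{-1}$ coefficient transforms exactly into $-A_{n,k}(s)/s$ with the correct constant $\frac{k-1}{n-2}$.
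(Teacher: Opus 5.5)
Your proposal is correct and follows essentially the paper's argument: the substitution $s=\sqrt{1-t^2}$, the term-by-term transformation of $(1-t^2)f''+(f-tf')$, the invariance of the gradient factor, the identity $A_{n,n-k}(t)\,g'(t)=A_{n,k}(s)\,f'(s)$, and the angle identity $\sqrt{1-t^2}\,|g'(t)|=\sqrt{1-s^2}\,|f'(s)|$ all match the paper's computation. The only difference is that you express $g$'s derivatives through $f$ rather than $f$'s through $\hat f$, and your remark that the cones correspond under the coordinate swap $(x,y,z)\mapsto(y,x,z)$ states the isometry slightly more explicitly than the paper does.
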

\begin{proof}
The equation~\eqref{eqn:capillary-ODE} is invariant under rewriting $\bR^n = \bR^{n-k}_x \times \bR^k_y$ with coordinate $\sqrt{1-t^2} = \frac{|x|}{\sqrt{|x|^2 + |y|^2}}$, instead of $t = \frac{|y|}{\sqrt{|x|^2 + |y|^2}}$.
Indeed, setting $s := \sqrt{1-t^2}$ and $\hat{f}(s) := f(\sqrt{1-s^2}) = f(t)$, we find that $\frac{dt}{ds} = - \frac{s}{t}$ and $\hat{f}'(s) = - \frac{s}{t} f'(t)$.
Moreover, 
\begin{align*}
    A_{n,k}(t) f'(t) &= - \frac{t}{s} \frac{1}{t} \Bigl( t^2 - \frac{k-1}{n-2} \Bigr) \hat{f}'(s) = \frac{1}{s} \left( s^2 - 1 +\frac{k-1}{n-2} \right) \hat{f}'(s) = A_{n,n-k}(s) \hat{f}'(s), \\
    \hat{f}''(s) &= - \frac{d}{ds} \Bigl( \frac{s}{t} \Bigr) f'(t) - \frac{s}{t} \frac{d}{ds} (f'(t)) = - \frac{f'}{t^3} + \frac{s^2}{t^2} f'',
\end{align*}
using the equalities $(\frac{s}{t})' = \frac{t + t^{-1} s^2 }{t^2} = t^{-3}$ and $\frac{d}{ds} (f') = f'' \frac{dt}{ds} = - \frac{s}{t} f''$.
We may therefore express
\begin{align*}
& (1-t^2) f'' = s^2 \bigl( \tfrac{t^2}{s^2} \hat{f}'' - \tfrac{1}{s^3} \hat{f}' \bigr) = t^2 \hat{f}'' - s^{-1} \hat{f}', \qquad f - tf' = \hat{f} + s^{-1} t^2 \hat{f}', \\
& (1-t^2) f'' + (f - tf') = t^2 \hat{f}'' + \hat{f} - s \hat{f}' = (1-s^2) \hat{f}'' + ( \hat{f} - s \hat{f}'), \\
& (1-t^2) \frac{\lambda (f')^2}{1+ \lambda f^2} = s^2 \frac{ \lambda( \frac{t^2}{s^2} (\hat{f}')^2)}{1+ \lambda \hat{f}^2} = t^2 \frac{\lambda (\hat{f}')^2}{1 + \lambda \hat{f}^2} = (1-s^2) \frac{\lambda (\hat{f}')^2}{1 + \lambda \hat{f}^2}. 
\end{align*}
Combining these relations shows that $\hat{f}(s)$ solves the equation~\eqref{eqn:capillary-ODE} in the variable $s$, for the pair $(n,n-k)$, if and only if $f(t)$ does for $(n,k)$.
Under this transformation, a zero $t_i$ of $f$ becomes a zero $s_i$ of $\hat{f}$, with $s_i = \sqrt{1 - t_i^2}$.
Then, the symmetry $\hat{f}'(s) = - \frac{s}{t} f'(t)$ implies that 
\[
(1-s_i^2) \hat{f}'(s_i)^2 = t_i^2 \hat{f}'(s_i)^2 = s_i^2 f'(t_i)^2 = (1-t_i^2) f(t_i)^2
\]
hence the resulting capillary angles $\theta_i = \on{arctan} \sqrt{ (1 - t_i^2) f'(t_i)^2 }$ are the same for $f(t)$ and $\hat{f}(s)$.
The resulting cones are therefore isometric, as claimed.
\end{proof}

We now establish some fundamental properties of solutions of equation~\eqref{eqn:capillary-ODE}, cf.~\cite{FTW-1}*{\S 3}.
We introduce the quantity
    \begin{equation}\label{eqn:hf-for-lemma}
    h_f (t):= f - A(t) f'.
    \end{equation}
Applying the equation to $f''$ from~\eqref{eqn:capillary-ODE}, we see that $h_f$ satisfies the ODE
\begin{equation}\label{eqn:h'-abbrevaited}
    h'_f = A(t) S(t) h_f - \tfrac{\alpha(1-\alpha)}{t^2 (1-t^2)} f' \, .
\end{equation}
where we denoted $S(t) := \frac{n-1}{1-t^2} + (n-2) \frac{\lambda (f')^2}{1+\lambda f^2} > 0$.
Define $\alpha := \frac{k-1}{n-2}$ (meaning $A(t) = t - \alpha t^{-1}$) and $\psi(t) := \sqrt{|t^2 - \alpha|}$, so $\frac{\psi'}{\psi} = \frac{1}{A}$ for $t \in [0,1] \setminus \{ \sqrt{\alpha} \}$.
Then, the function $h_f = f - \frac{\psi}{\psi'} f'$ satisfies
\begin{equation}\label{eqn:integrating-factor-f-psi}
\left( \frac{f}{\psi} \right)' = \frac{f' - \frac{\psi'}{\psi} f}{\psi} = - \frac{h_f}{A \psi}.
\end{equation}
The relation~\eqref{eqn:h'-abbrevaited} also has the following consequence: the function $\frac{f'}{h_f}$ satisfies the Riccati equation
\begin{equation}\label{eqn:riccati-for-ratio}
\begin{split}
    \Bigl( \frac{f'}{h_f} \Bigr)' = \frac{\alpha(1-\alpha)}{t^2(1-t^2)} \Bigl( \frac{f'}{h_f} \Bigr)^2 + \left( \frac{\alpha}{t(1-t^2)} - A(t) S(t) \right) \frac{f'}{h_f} - S(t).
\end{split}
\end{equation}

\begin{proposition}\label{prop:unifying-proposition}
    Let $f$ be a positive solution of equation~\eqref{eqn:capillary-ODE} on an interval $(t_1,b) \subset (0,1)$ with $\{ f(t_1) \geq 0, f'(t_1) > 0\}$.
    Then the following properties hold:
    \begin{enumerate}[$(i)$]
        \item If $f$ is not strictly increasing for all time, then it has a critical point $t_c$. 
        Additionally, $h_f>0$ on $[t_1, b]$ and $h'_f < 0$ for $t < \min \{ \sqrt{\frac{k-1}{n-2}}, t_c \}$ and $h'_f > 0$ for $t > \max \{ \sqrt{\frac{k-1}{n-2}}, t_c \}$.
        
        \item If $f(t_2) = 0$, then $t_2 > \sqrt{\frac{k-1}{n-2}}$.
        If $f(t_1) = f(t_2) = 0$, then $t_1 < \sqrt{\frac{k-1}{n-2}} < t_2$.
        
        \item The function $f$ is strictly increasing if and only if $h_f$ has a zero $t_h$, and $h_f, h'_f < 0$ for $t \geq t_h$.

    \end{enumerate}
\end{proposition}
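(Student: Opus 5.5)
The whole argument runs through the first-order relation~\eqref{eqn:h'-abbrevaited}, $h_f' = A S h_f - \tfrac{\alpha(1-\alpha)}{t^2(1-t^2)} f'$, together with the sign structure of $A(t) = t - \alpha t^{-1}$: $A<0$ on $(0,\sqrt{\alpha})$ and $A>0$ on $(\sqrt{\alpha},1)$. Since $2 \le k \le n-2$, we have $\alpha \in (0,1)$, so $\alpha(1-\alpha)>0$ and the forcing term $-\tfrac{\alpha(1-\alpha)}{t^2(1-t^2)}f'$ has sign opposite to $f'$. The idea is to track $h_f$ and $f'$ together. By~\eqref{eqn:ode-star} in the form $(1-t^2)f'' = -h_f\,(\text{positive}) - (f-tf')$, the sign of $h_f$ controls $f''$ at critical points. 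At a critical point $t_c$ one has $h_f(t_c) = f(t_c) > 0$, so $f''(t_c)<0$. Hence every critical point is a strict local maximum, there is at most one, and $f'<0$ afterwards.

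For (i), suppose $f$ is not strictly increasing. Since $f'(t_1)>0$, there is a first critical point $t_c$, and $f'>0$ on $[t_1,t_c)$ and $f'<0$ on $(t_c,b]$. To show $h_f>0$, I argue by contradiction at a first zero $t_h$ of $h_f$.
\begin{itemize}
\item If $t_h < t_c$, then $f'>0$ there, so $h_f'(t_h) = -\tfrac{\alpha(1-\alpha)}{t^2(1-t^2)}f' < 0$. Thus $h_f$ becomes negative and, by the same computation at any later zero, stays negative while $f'>0$. But $h_f(t_c) = f(t_c) > 0$, a contradiction.
\item If $t_h > t_c$, then $f'<0$, so $h_f'(t_h)>0$ and $h_f$ cannot cross downward.
\item At $t_1$ itself, $h_f(t_1) = f(t_1) - A f'(t_1)$. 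For $t_1<\sqrt\alpha$ this is positive. For $t_1 \ge \sqrt\alpha$, $h_f$ would be negative at $t_1$ if $f(t_1)=0$, and then it stays negative; I will push this case into (iii).
\end{itemize}
With $h_f>0$ established, the monotonicity of $h_f$ follows by reading off signs. For $t<\min\{\sqrt\alpha,t_c\}$ we have $A<0$, $h_f>0$, and $f'>0$, so both terms of $h_f'$ are negative. For $t>\max\{\sqrt\alpha,t_c\}$ we have $A>0$ and $f'<0$, so both terms are positive.

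For (iii), the same crossing analysis gives the equivalence. If $h_f$ vanishes while $f'>0$, then $h_f'<0$ there, and $h_f$ stays negative as long as $f'>0$. No critical point can then occur, since $h_f(t_c) = f(t_c)>0$. Hence $f'>0$ throughout, and $h_f' = ASh_f - (\text{pos})f'$ is negative wherever $A \ge 0$. When $A<0$ one needs more care, and I expect this to be the main obstacle. I would first try to show that a zero of $h_f$ with $f'>0$ forces $t_h>\sqrt\alpha$, using $h_f = f + |A|f' > 0$ when $A<0$ and $f \ge 0$. Then on $[t_h,b]$ both terms of $h_f'$ are negative. Conversely, if $f$ is strictly increasing then $f'>0$ throughout, and I must show $h_f$ vanishes. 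Suppose instead $h_f>0$ for all $t$. Then by~\eqref{eqn:integrating-factor-f-psi} $f/\psi$ is decreasing for $t>\sqrt\alpha$. This seems compatible with increasing $f$ as long as $\psi$ grows. The contradiction has to come from $f'' \le -(\text{pos})h_f - (f-tf')$ near $t\to1$, or from the blow-up of the Riccati equation~\eqref{eqn:riccati-for-ratio} for $f'/h_f$. There the $-S(t)$ term with $S \ge \tfrac{n-1}{1-t^2}$ drives $f'/h_f$ to $-\infty$ in finite time before $t=1$, forcing $h_f$ through zero. This Riccati comparison is where I expect the real work to be.

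For (ii), suppose $f(t_2)=0$ with $f>0$ before. Then $f'(t_2)<0$, so $f$ has a critical point and (i) gives $h_f(t_2) = -A(t_2)f'(t_2) > 0$. This forces $A(t_2)>0$, that is, $t_2>\sqrt\alpha$. If also $f(t_1)=0$, then $h_f(t_1) = -A(t_1)f'(t_1) > 0$ by (i), with $f'(t_1)>0$, which gives $A(t_1)<0$, i.e.\ $t_1<\sqrt\alpha$.
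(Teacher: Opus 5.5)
The gap is the converse in $(iii)$: that a strictly increasing $f$ forces $h_f$ to vanish. It cannot be closed, because it is false.

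Everything else in your proposal is sound and follows the paper's crossing analysis. Critical points are strict maxima since $(1-t^2)f''=-(n-1)f$ there, and at a zero of $h_f$ the sign of $h_f'$ is opposite to that of $f'$. This covers $(i)$, $(ii)$ and the forward half of $(iii)$.

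Two of your steps are actually simpler than the paper's:
\begin{itemize}
\item Your direct proof of $t_1<\sqrt{\alpha}$ in $(ii)$ is valid and avoids the paper's detour through Lemma~\ref{lemma:change-of-variables}. If $h_f(t_1)=-A(t_1)f'(t_1)\le 0$, then $h_f$ stays nonpositive while $f'>0$, contradicting $h_f(t_c)=f(t_c)>0$.
\item The case $A<0$ in the forward half of $(iii)$ is no obstacle. There $h_f=f+|A|f'>0$, so $t_h>\sqrt{\alpha}$, exactly as in the paper.
\end{itemize}

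Here is a counterexample to the converse. Let $\hat f$ solve \eqref{eqn:capillary-ODE} for the pair $(n,n-k)$ with $\hat f'(0)=0$, $\hat f(0)>0$, and first zero $s_2<1$. For $\lambda=0$ take the hypergeometric $f_{n,n-k}$; for $\lambda>0$ take $\hat f(0)$ small.
\begin{itemize}
\item Set $t_1:=\sqrt{1-s_2^2}$ and $f(t):=\hat f(\sqrt{1-t^2})$. Then $f$ is a positive solution for $(n,k)$ on $(t_1,1)$ with $f(t_1)=0<f'(t_1)$.
\item Since $\hat f'<0$ on $(0,s_2]$, $f$ is strictly increasing on all of $(t_1,1)$.
\item The computation in Lemma~\ref{lemma:change-of-variables} gives $A_{n,k}(t)f'(t)=A_{n,n-k}(s)\hat f'(s)$, hence $h_f(t)=h_{\hat f}(s)$.
\item The equation at $s=0$ gives $\hat f''(0)=-\tfrac{n-1}{n-k}\hat f(0)$. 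Therefore $h_{\hat f}(0)=\hat f(0)+\tfrac{n-k-1}{n-2}\hat f''(0)=\tfrac{k-1}{(n-2)(n-k)}\hat f(0)>0$.
\item Because $\hat f'<0$ on $(0,s_2]$, $h_{\hat f}$ can only cross zero upward there.
\end{itemize}
Hence $h_f>0$ on $[t_1,1)$, although $f$ is strictly increasing on its whole interval.

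This also shows why your Riccati mechanism fails, in two ways.
\begin{itemize}
\item Near $t=1$, the positive term $\tfrac{\alpha(1-\alpha)}{t^2(1-t^2)}r^2$ in \eqref{eqn:riccati-for-ratio} carries the same $(1-t^2)^{-1}$ weight as $-S$. The leading-order balance $\alpha(1-\alpha)r^2+\bigl(\alpha-(1-\alpha)(n-1)\bigr)r-(n-1)=0$ has the positive root $r=\tfrac{n-1}{\alpha}$. This is exactly the limit of $f'/h_f$ in the example.
\item The direction is also wrong. Where $f'>0$ and $h_f>0$, the ratio $r$ is positive, and $h_f\downarrow 0$ corresponds to $r\uparrow+\infty$, driven by the quadratic term. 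The $-S$ term instead pushes $r$ toward $0$, i.e.\ toward a critical point of $f$.
\end{itemize}

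The paper's own proof does not establish this direction either. Its dichotomy ``either $h<0$ for $t>t_1$, or $h$ has a unique sign change'' silently omits the alternative $h_f>0$ throughout, which is precisely the example above.

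What is actually true, and what you should prove instead, has two parts:
\begin{itemize}
\item If $h_f$ vanishes, then $f$ is strictly increasing and the zero $t_h$ is unique with $t_h>\sqrt{\alpha}$. Moreover $h_f<0$ on $(t_h,b)$ and $h_f'<0$ on $[t_h,b)$.
\item If $f$ is strictly increasing, then $h_f$ changes sign at most once, and only downward.
\end{itemize}
Your crossing arguments already give both.
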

\begin{proof}
Working with the rescaled function $f = \sqrt{\lambda} f_{\lambda,a}$, we may set $\lambda=1$ in~\eqref{eqn:capillary-ODE}.
At a zero $t_*$ of $f'$, the relation~\eqref{eqn:capillary-ODE} forces $(1-t_*^2) f''(t_*) = - (n-1) f(t_*) < 0$.
If $f'$ had at least two successive zeros $\tau_1 < \tau_2$, then $f''(\tau_1) < 0$ would imply that $f' < 0$ for small $t - \tau_1 > 0$, whereby $f''(\tau_2) \geq 0$.
This would contradict $(1-\tau_2^2) f''(\tau_2) = - (n-1) f(\tau_2) < 0$, so $f'$ has at most one zero, at $t = t_c$.

Any non-trivial solution $f$ must have $f'(t_*) \neq 0$ whenever $f(t_*) = 0$ by ODE uniqueness, since $f \equiv 0$ solves~\eqref{eqn:capillary-ODE}.
    Using the definition of $h_f$ and equation~\eqref{eqn:hf-for-lemma}, the same property holds for the function $\frac{f}{\psi}$ from~\eqref{eqn:integrating-factor-f-psi}, hence $h(t_*) = h'(t_*) = 0$ cannot occur at some point $t_*$, unless $f \equiv 0$.

    We now examine forward intervals $[t_c,b]$ with $f'(t_c) = 0$, and will use their properties to study the situation when $f$ attains a critical point.
    Suppose $t_2$ is the first zero of $f$ to the right of $t_c$, if it exists, so $f(t_2) = 0$ and $f'(t_2) < 0$.
    From the previous step, we know $f'<0$ on $[t_c, t_2]$.
    Since $f'(t_c) = 0$, we have $h(t_c) = f(t_c) > 0$ and $h > 0$ for small $t - t_c > 0$.
    If $h > 0$ fails in $[t_c,b)$, there is a first zero $\tau_1$ with $h(\tau_1) = 0$ and $h'(\tau_1) < 0$.
    Using the definition of $h$ together with~\eqref{eqn:hf-for-lemma}, we obtain
    \begin{equation}\label{eqn:f-h'-properties}
        f(\tau_1) = A(\tau_1) f'(\tau_1), \qquad h'(\tau_1) = - \tfrac{\alpha(1-\alpha)}{\tau_1^2 (1 - \tau_1^2)} f'(\tau_1).
    \end{equation}
    Since $\alpha = \frac{k-1}{n-2} \in (0,1)$, this implies $\on{sgn} f'(\tau_1) = - \on{sgn} h'(\tau_1) > 0$.
    Since $f'<0$ on $[t_c, t_2]$, we must have $\tau_1 > t_2$, so $h>0$ on $[t_c, t_2]$.
    Therefore, 
    \[
    0 < h(t_2) = f(t_2) - A(t_2) f'(t_2) = |f'(t_2)| \, A(t_2) \implies A(t_2) > 0 \implies t_2 > \sqrt{\tfrac{k-1}{n-2}} ,
    \]
    which means that $f$ cannot become zero before $\sqrt{\frac{k-1}{n-2}}$.
    A direct adaptation of~\cite{FTW-1}*{Lemma 3.4} now proves that the properties $h>0$ and $f'<0$ are preserved on $[t_2, b)$; in particular, $f'<0$ on $(t_2, b)$ and $f'$ has at most one zero in $(t_c, b)$.
    Moreover, $f<0$ and $\psi$ is strictly increasing on $[t_2,b)$, while $\frac{f}{\psi}$ is strictly decreasing, so $f$ is a strictly decreasing negative function.
    We conclude that $f'<0$ and $h > 0$ on $[t_c,b]$.
    Therefore, $f$ has at most one critical point, which is a maximum.

    We now prove that $h>0$.
    By definition, $h(t_1) = - A(t_1) f'(t_1) > 0$ and $h(t_c) = f(t_c) > 0$, so if $h > 0$ failed, then $h$ would have at least two sign changes $\tau_1 < \tau_2 \in (t_1, t_c)$.
    Observe that~\eqref{eqn:h'-abbrevaited} forces $\on{sgn} h'(t_*) = - \on{sgn} f'(t_*)$ at a zero of $h$.
    Let $\tau_1$ be the first zero of $h$ and $\tau_2$ be the last zero of $h$ in $(t_1,t_c)$, so that $h'(\tau_1) < 0 $ and $h'(\tau_2) > 0$ with strict inequality, by ODE uniqueness.
    Then $f'(\tau_2) < 0 < f'(\tau_1)$, contradicts the fact that $f'>0$ on $[t_1, t_c]$; we conclude that $h>0$ on $[t_1,t_c]$.

   In the special case where we provide initial data $\{ f(\sqrt{\frac{k-1}{n-2}}) = 0, f'(\sqrt{\frac{k-1}{n-2}}) = a > 0 \}$, $f$ will not have a critical point.
    Suppose otherwise, and $f$ had a critical point at some point $t_c > \sqrt{\frac{k-1}{n-2}}$.
    Then, the symmetry of Lemma~\ref{lemma:change-of-variables} shows that $\hat{f}(t) := f(\sqrt{1-t^2})$ is a forward solution of the corresponding equation~\eqref{eqn:capillary-ODE} for $k \leftrightsquigarrow n-k$ with $\hat{f}'(\hat{t}_c) = 0$ and $\hat{f}( \sqrt{\frac{n-k-1}{n-2}}) = 0$.
    However, applying the previous step to $\hat{f}(t)$, for $k \leftrightsquigarrow n-k$, would imply that $\hat{f}(\sqrt{\frac{n-k-1}{n-2}}) > 0$; this produces a contradiction.
    Therefore, $f'>0$ on the domain of $f$ in this case.
    
    At a zero of $f$, $h_f'$ and $f'$ must have opposite signs.
    To see this, we consider the relation~\eqref{eqn:h'-abbrevaited} at $t_*$, where $f(t_*) = 0$ and $h(t_*) = - A(t_*) f'(t_*) > 0$ (due to $t_* < \sqrt{\frac{k-1}{n-2}}$ and $f'(t_*) > 0$) to obtain
    \begin{align*}
    h'(t_*) &= A(t_*) \Bigl( \tfrac{n-1}{1-t_*^2} + (n-2) f'(t_*)^2 \bigr) (- A(t_*)) f'(t_*) - \tfrac{\alpha(1-\alpha)}{t_*^2 (1-t_*^2)} f'(t_*) \\
    &= - f'(t_*) \left[ A(t_*)^2 \bigl( \tfrac{n-1}{1-t_*^2} + (n-2) f'(t_*)^2 \bigr) + \tfrac{\alpha(1-\alpha)}{t_*^2 (1 - t_*^2)} \right].
    \end{align*}
    The bracketed quantity is strictly positive due to $\alpha = \frac{k-1}{n-2} \in (0,1)$, hence $\on{sgn} h'(t_*) = - \on{sgn} f'(t_*)$.
    To obtain the monotonicity of $h$, we use $h>0$ on $[t_1,b]$ to see that equation~\eqref{eqn:h'-abbrevaited} has $h'$ both strictly negative for $t < \min \{ \sqrt{\frac{k-1}{n-2}}, t_c \}$, and strictly positive for $t > \max \{ \sqrt{\frac{k-1}{n-2}}, t_c \}$.
    This proves $(i)$.

    To see $(ii)$, suppose that $f$ vanishes on $\{t_1,t_2\}$, so there is a unique critical point $t_c\in (t_1,t_2)$.
    We showed $t_2 > \sqrt{\frac{k-1}{n-2}}$ and applying the symmetry of Lemma~\ref{lemma:change-of-variables} shows that the function $\hat{f}(t) := f(\sqrt{1-t^2})$ solves the corresponding equation~\eqref{eqn:capillary-ODE} for $k \leftrightsquigarrow n-k$, with zeros at $\hat{t}_1 := \sqrt{1-t_2^2}$ and $\hat{t}_2 := \sqrt{1 - t_1^2}$.
    Therefore, $\hat{t}_2 > \sqrt{\frac{n-k-1}{n-2}}$, which implies that $t_1 = \sqrt{1 - \hat{t}_2} < \sqrt{\frac{k-1}{n-2}}$, proving $(ii)$.
    
    For $(iii)$, the first direction of this result follows from the previous steps.
    Suppose that $f$ is strictly increasing.
    If $h$ has a sign change at $t_h > t_1$, then the relation~\eqref{eqn:h'-abbrevaited} implies that $\on{sgn} h'(t_h) = - \on{sgn} f'(t_h) < 0$, meaning that $h$ can only cross downward.
    Thus, either $h<0$ for $t>t_1$, or $h$ has a unique sign change $t_h$, and $h<0$ for $t > t_h$.
    In the latter situation, 
    \[
    0 = h(t_h) = f(t_h) - A(t_h) f'(t_h) > - A(t_h) f'(t_h) \implies t_h > \sqrt{\tfrac{k-1}{n-2}}.
    \]
    Finally, for $t \geq t_h > \sqrt{\frac{k-1}{n-2}}$, we have $h <0 $ and $(-f')<0$, hence using the relation~\eqref{eqn:h'-abbrevaited}, as in part $(i)$, shows that $h'<0$ for $t \geq t_h$.
\end{proof}
Taking $\lambda=0$ in equation~\eqref{eqn:capillary-ODE} recovers the equation $\cL_{n,k}f = 0$ from~\eqref{eqn:legendre-form}, for which the function $U = \rho f(t)$ is harmonic.
Moreover, $|\nabla U|^2 = f^2 + (1-t^2) (f')^2$, hence $|\nabla U|$ is constant on $\{ U > 0 \}$ if and only if the expression $c = (1-t_i^2) f'(t_i)^2$ is constant at the zeros of $f$, in which case $\frac{1}{\sqrt{c}} U$ is a solution of the one-phase problem.
In a related paper~\cite{six-way}, we examine the corresponding one-phase solutions of this type and prove that for any $2 \leq k \leq n-2$, there exist exactly two $O(n-k) \times O(k)$-invariant solutions of the one-phase problem: one for which $f$ has positive phase the interval $[-t_{n,k}, t_{n,k}]$, and one for which $f$ has positive phase an interval $[ \tilde{t}_1, \tilde{t}_2] \subset (0,1)$ (with the dependence of the $\tilde{t}_i$ on $n,k$ suppressed).
The first family is considered in~\cites{hong-singular , FTW-stability-one-phase}, whereas the second family of solutions obtained in~\cite{six-way} is novel.

In the first case, the solution is a rescaling of the hypergeometric function 
\[
f_{n,k}(t) := {}_2 F_1\left(\frac{n-1}{2} ,- \frac{1}{2} ; \frac{k}{2} ;t^2\right), \qquad \cL_{n,k} f_{n,k} = 0, \qquad f_{n,k}(0) = 1, \qquad f'_{n,k}(0) = 0,
\]
with a zero at $t_{n,k} > 0$.
In the second case, we denote the corresponding solution by $\tilde{f}_{n,k}(t)$ vanishing at $\tilde{t}_1,\tilde{t}_2 > 0$ with $\tilde{U}_{n,k} := \rho \tilde{f}_{n,k}(t)$, so that $(1-\tilde{t}_1^2) \tilde{f}'(\tilde{t}_1)^2 = (1-\tilde{t}_2^2) \tilde{f}'(\tilde{t}_2)^2 = 1$.
Since $\cL f_{n,k} = \cL\tilde{f}_{n,k} = 0$, the Sturmian theory implies that $t_{n,k} \in (\tilde{t}_1, \tilde{t}_2)$.

We will again use a linear barrier for the solutions of equation~\eqref{eqn:capillary-ODE}, as done in~\cite{FTW-1}*{\S~3}.

\begin{lemma}\label{lemma:harmonic-barrier}
    There exists a $\hat{t}_{n,k} \in (\tilde{t}_1, \sqrt{\frac{k-1}{n-2}})$ such that the unique solution of equation $\cL_{n,k} f = 0$ with initial data $\{ f(t_1) = 0, f'(t_1) = 1\}$ has another zero if $t_1 \leq \hat{t}_{n,k}$, and stays positive if $t_1 > \hat{t}_{n,k}$. 
\end{lemma}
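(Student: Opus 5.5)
Let $g_{s}$ denote the solution of $\cL_{n,k} g = 0$ with $g(s)=0$, $g'(s)=1$, for $s\in(0,1)$. The equation is linear and regular on $(0,1)$, so $g_s$ depends continuously on $s$ in $C^1_{\mathrm{loc}}$. The plan has four steps: a Sturm comparison showing that ``having a second zero'' is monotone in $s$, an analysis of the endpoint $s \to 1$ via Proposition~\ref{prop:unifying-proposition}, the normalization at $s=\tilde t_1$ from the one-phase solution, and an openness argument at the threshold.

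\emph{Monotonicity in $s$.} For $s<s'$, the Wronskian identity for the self-adjoint form $(p g')' + (n-1)\frac{p}{1-t^2} g = 0$ in~\eqref{eqn:legendre-form} shows that zeros of $g_s$ and $g_{s'}$ interlace. Suppose $g_{s'}$ has a second zero $z'$. Since $s<s'<z'$, Sturm separation gives that $g_s$ has a zero in $(s',z')$. Hence the set
\[
T:=\{s : g_s \text{ has a zero in } (s,1)\}
\]
is an interval of the form $(0,\hat t)$ or $(0,\hat t]$.

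\emph{Membership and exclusion.} First, $\tilde t_1\in T$, since the rescaled one-phase profile $\tilde f_{n,k}$ vanishes again at $\tilde t_2<1$. Second, every $s\ge\sqrt{\alpha}$, with $\alpha=\frac{k-1}{n-2}$, lies outside $T$. This follows from Proposition~\ref{prop:unifying-proposition}(ii), which applies with $\lambda=0$: a solution vanishing at $s$ and again at $t_2$ must have $s<\sqrt{\alpha}$. So $\hat t:=\sup T\in[\tilde t_1,\sqrt\alpha]$.

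\emph{Closedness of $T$ at $\hat t$.} This is the step I expect to be the main obstacle. Take $s_j\uparrow\hat t$ with second zeros $z_j$. These lie in $(\sqrt\alpha,1)$ by Proposition~\ref{prop:unifying-proposition}(ii). I must rule out $z_j\to1$. If the second zeros escape to $t=1$, then the limit $g_{\hat t}$ is positive on $(\hat t,1)$ and vanishes at the singular point $1$ in the limiting sense. Since the equation has a regular singular point at $t=1$ with exponents $0$ and $-\frac{n-k-2}{2}\le 0$ (ordinarily, $0$ and something nonpositive), $g_{\hat t}$ is a combination of the analytic solution and the singular one. I will argue that $g_{\hat t}(1^-)$ exists in $[0,\infty]$.
\begin{itemize}
\item If $g_{\hat t}(1^-)>0$, then openness by continuity gives a neighbourhood of $\hat t$ without second zeros, contradicting $s_j\to\hat t$.
\item If $g_{\hat t}(1^-)=0$, I would use Lemma~\ref{lemma:change-of-variables}, or directly Proposition~\ref{prop:unifying-proposition}(iii) via the sign of $h$ near $1$, to derive a contradiction. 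Limits of the $g_{s_j}$ have $f'<0$ near $1$ with $h>0$, so $h=f-Af'$ would force $f(1)>0$.
\end{itemize}
Hence $z_j\to z<1$, so $\hat t\in T$ and $T=(0,\hat t]$.

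\emph{Strict inequalities.} Openness of the complement (positivity on $(s,1)$ with $g_s(1^-)>0$ is stable) gives $\hat t<\sqrt\alpha$, because $g_{\sqrt\alpha}$ is increasing by Proposition~\ref{prop:unifying-proposition}. For $\hat t>\tilde t_1$, I would note that $\tilde t_2<1$ is an interior zero, so $g_{\tilde t_1}$ crosses zero transversally, and nearby $g_s$ still vanish. Hence $\tilde t_1$ is interior to $T$.
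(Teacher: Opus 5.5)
Your monotonicity step (Sturm separation, which plays the role of the paper's domain monotonicity of $\lambda_1(t_1,t_2)$), the membership $\tilde t_1\in T$, and the exclusion of $s\ge\sqrt\alpha$ are fine. The genuine gap is the closedness step. It cannot be repaired, because its conclusion $\hat t\in T$ is false.

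The broken claim is that $g_{\hat t}(1^-)>0$ gives, by continuity, a neighbourhood of $\hat t$ free of second zeros. Continuous dependence only gives convergence on compact subsets of $(0,1)$. Near the singular point, write $g_s=c_1(s)u_{\mathrm{reg}}+c_2(s)u_{\mathrm{sing}}$ with $u_{\mathrm{sing}}\sim(1-t)^{-\frac{n-k-2}{2}}$ (or $\log(1-t)$ if $n-k=2$). Positivity up to $t=1$ is stable only when $c_2\neq 0$ has the sign forcing $g_s\to+\infty$. A finite positive limit means $c_2=0$, and then nearby $g_s$ pick up a small singular component of either sign, hence possibly a zero arbitrarily close to $1$.

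That is exactly the situation at $\sup T$. Your own final argument (a second zero is simple and interior, so it persists when $s$ moves) applies at every point of $T$, so $T$ is open and $\hat t=\sup T\notin T$. Thus the $z_j$ do escape to $1$, while $g_{s_j}\to g_{\hat t}$ only locally. The limit $g_{\hat t}$ is the solution regular at $t=1$, with $g_{\hat t}(1)>0$, i.e.\ the positive first Dirichlet eigenfunction with eigenvalue $n-1$ on $\{t>\hat t\}$. The correct dichotomy is ``second zero iff $t_1<\hat t$''.

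The paper's eigenvalue proof has the same endpoint slip: $\lambda_1(\hat t,1)=n-1$ forces $\lambda_1(\hat t,\tau)>n-1$ for every $\tau<1$, so there is no second zero at $t_1=\hat t$. This is harmless, since Propositions~\ref{prop:small-breather} and~\ref{prop:shoot-infinite-slope} only use $(0,\hat t)\subset T$ and a positivity window below $\sqrt\alpha$. So you should drop closedness and put the endpoint on the positive side.

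The same invalid stability principle is used in your step $\hat t<\sqrt\alpha$. Knowing that $g_{\sqrt\alpha}$ is increasing does not rule out $c_2(\sqrt\alpha)=0$. Evaluating $\cL_{n,k}f=0$ at $t=1$ shows that every regular solution satisfies $(n-1)f(1)=(n-k)f'(1)$, so a regular solution is increasing near $1$ when positive. What you need is $g_{\sqrt\alpha}(1^-)=+\infty$, and this follows from $h=f-Af'$:
\begin{itemize}
\item For $g_{\sqrt\alpha}$ one has $h(\sqrt\alpha)=0$ and $h'(\sqrt\alpha)=-1$ by~\eqref{eqn:h'-abbrevaited}.
\item $h$ cannot cross zero upward while $f'>0$, so $h<0$ on $(\sqrt\alpha,1)$.
\item A regular solution with $f(1)>0$ has $h(1)=f(1)-(1-\alpha)f'(1)=\frac{k-1}{(n-2)(n-k)}f(1)>0$.
\end{itemize}
Hence $c_2(\sqrt\alpha)\neq0$, positivity is genuinely open near $s=\sqrt\alpha$, and $\hat t<\sqrt\alpha$ follows. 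The paper's inference ``$g_{\sqrt\alpha}>0\Rightarrow\lambda_1(\sqrt\alpha,1)>n-1$'' tacitly needs this same input, since positivity alone only gives $\lambda_1(\sqrt\alpha,1)\ge n-1$.
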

\begin{proof} For $t_1$ such that a solution $f$ with a second zero exists, let $t_2 := \tau(t_1)$ denote the second zero, so $\sqrt{\frac{k-1}{n-2}} \in (t_1, t_2)$ by Proposition~\hyperref[prop:unifying-proposition]{\ref{prop:unifying-proposition}$(ii)$}.
Using the form~\eqref{eqn:legendre-form}, we may write
\[
f(t_1) = f(t_2) = 0, \qquad - \tfrac{1-t^2}{p(t)} (pf')' = (n-1) f,
\]
so $(n-1)$ is a Dirichlet eigenvalue of the Sturm-Liouville operator $\cA u := - \frac{1-t^2}{p(t)} (p u')'$ on $(t_1, t_2)$.
In fact, this is the first Dirichlet eigenvalue, meaning that $\lambda_1(t_1, t_2) = n-1$.
The monotonicity of the Dirichlet eigenvalue (together with Sturmian theory) implies that $\lambda_1(t_1,t_2)$ is increasing in $t_1$ and decreasing in $t_2$.
In particular, for any two admissible $t_1 < \bar{t}_1$, this result also shows that $\tau(t_1) < \tau(\bar{t}_1)$.
The result of Proposition~\hyperref[prop:unifying-proposition]{\ref{prop:unifying-proposition}$(ii)$} shows that the solution starting at $\sqrt{\frac{k-1}{n-2}}$ remains positive, so this is not an admissible initial point for a solution that returns to zero; consequently, $\lambda_1 ( \sqrt{\frac{k-1}{n-2}},1) > n-1$, and we can find some $\hat{t}_{n,k} < \sqrt{\frac{k-1}{n-2}}$ such that $\lambda_1 ( \hat{t}_{n,k},1) = n-1$.
Therefore,
\[
t_1 \leq \hat{t}_{n,k} \implies \lambda_1(t_1,1) \leq n-1, \qquad t_1 > \hat{t}_{n,k} \implies \lambda_1(t_1,1) > n-1,
\]
which makes $\lambda_1(t_1,\tau(t_1)) = n-1$ possible if and only if $t_1 \leq \hat{t}_{n,k}$.
\end{proof}

We now obtain some of the key properties behind the proof of Theorem~\ref{thm:MinimalSurfaceExistence}.
The crucial observation is that, somewhat surprisingly, any candidate capillary profiles must satisfy strong uniform bounds.
In particular, solutions of the form $\{ f(t_1) = 0, f'(t_1) = M \}$ with arbitrarily large slope $M$ converge as $M \to \infty$, allowing us to define and study \textit{vertical} shots $\{ f(t_1) = 0, f'(t_1) = \infty\}$.

\begin{lemma}\label{lemma:bound-at-sqrt}
Any solution $f$ of equation~\eqref{eqn:capillary-ODE} with two zeros in $(0,1)$ satisfies $f(\sqrt{\frac{k-1}{n-2}}) \leq \frac{2}{\sqrt{n}} \lambda^{- \frac{1}{2}}$.
\end{lemma}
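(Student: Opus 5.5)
Throughout I would normalize $\lambda = 1$ by replacing $f$ with $\sqrt{\lambda} f$, as in the proof of Proposition~\ref{prop:unifying-proposition}, so the claim becomes $f(t_0) \le 2/\sqrt{n}$ at $t_0 := \sqrt{\frac{k-1}{n-2}}$. The key feature of $t_0$ is $A(t_0) = 0$, so $h_f(t_0) = f(t_0)$. By Proposition~\hyperref[prop:unifying-proposition]{\ref{prop:unifying-proposition}$(ii)$} we have $t_1 < t_0 < t_2$. By part $(i)$ there is a unique critical point $t_c \in (t_1,t_2)$, and $h_f > 0$ on $[t_1,t_2]$.

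\emph{Reduction to one side.} Lemma~\ref{lemma:change-of-variables} exchanges $(n,k) \leftrightsquigarrow (n,n-k)$ and $t_0 \leftrightsquigarrow \sqrt{1-t_0^2} = \sqrt{\frac{n-k-1}{n-2}}$. The value $f(t_0)$ is preserved, and the bound $2/\sqrt{n}$ depends only on $n$. So I may assume $t_c \le t_0$. Then on $[t_0,t_2]$ both $A \ge 0$ and $f' \le 0$ hold, which gives
\[
h_f = f - A f' \ge f \ge 0 .
\]

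\emph{Differential inequality.} I would rewrite \eqref{eqn:capillary-ODE} as
\[
(1-t^2) f'' + (f - t f') + (n-2)\Bigl(1 + (1-t^2)\tfrac{(f')^2}{1+f^2}\Bigr) h_f = 0 .
\]
Inserting $h_f \ge f$ on $[t_0,t_2]$ yields
\[
(1-t^2)\Bigl( f'' + (n-2) \tfrac{f (f')^2}{1+f^2} \Bigr) \le -(n-1) f + t f' \le -(n-1) f .
\]
The left-hand bracket is exactly $G(f)''/G'(f)$ for $G(s) := \int_0^s (1+\sigma^2)^{\frac{n-2}{2}} \, d\sigma$. Hence $w := G(f)$ satisfies
\[
(1-t^2) w'' \le -(n-1) f (1+f^2)^{\frac{n-2}{2}} .
\]
So $w$ is strongly concave on $[t_0,t_2]$, with $w(t_2) = 0$ and $w'(t_0) \le 0$.

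\emph{Closing the bound.} Integrate the concavity from $t_0$: since $w'(t_0) \le 0$, the function $w$ must fall from $G(f(t_0))$ to $0$ on an interval of length $t_2 - t_0 < 1$. Its deceleration is at least $(n-1) f (1+f^2)^{\frac{n-2}{2}}$, which is comparable to $f\, G'(f)$. A first integral of the form $\tfrac12 (w')^2 + \Phi(w) \le \text{const}$, compared with the Jacobi-type equation $w'' = -c\, w$, should show the descent time shrinks when $f(t_0)$ is large. Balancing this against the available room, together with the boundary angle relation \eqref{eqn:boundaryAngleTerm}, should give the explicit threshold $f(t_0)^2 \le 4/n$.

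\emph{Expected obstacle.} The main difficulty is that the linear terms and the factor $(1-t^2)$ prevent a clean conserved quantity, so the explicit constant $2/\sqrt{n}$ needs a careful pointwise argument. Concavity of $w$ by itself bounds shape but not size. If the integrated approach fails, I would instead evaluate the Riccati equation \eqref{eqn:riccati-for-ratio} for $f'/h_f$ at $t_0$, where $A$ vanishes and $h_f = f$. From there I would derive a sign contradiction whenever $f(t_0)^2 > 4/n$, using the monotonicity of $h_f$ from Proposition~\hyperref[prop:unifying-proposition]{\ref{prop:unifying-proposition}$(i)$} on either side of $t_0$.
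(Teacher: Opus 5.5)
Your setup matches the paper's. Normalizing $\lambda=1$, using Lemma~\ref{lemma:change-of-variables} to assume $f'(t_0)\le 0$, and recording $A\ge 0$, $f'\le 0$, $h_f>0$ on $[t_0,t_2]$ is exactly how the paper starts. There is, however, a genuine gap in the core step.

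\textbf{Why the concavity argument cannot work.} The inequality $(1-t^2)w''\le -(n-1)fG'(f)$ is an \emph{upper} bound on $w''$. It only forces $w=G(f)$ to come down at least as fast as some comparison profile. So at best it bounds $t_2-t_0$ from above; via concavity it also gives $G(F)\le |f'(t_2)|\,(t_2-t_0)$, which is a lower bound on the exit slope.
\begin{itemize}
\item It is compatible with arbitrarily large $F$: a very steep concave profile falling from $G(F)$ to $0$ satisfies it. So ``the descent time shrinks when $f(t_0)$ is large'' is no contradiction with the available room $t_2-t_0<1$; it points the wrong way.
\item The boundary relation \eqref{eqn:boundaryAngleTerm} cannot help either. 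The lemma assumes only two zeros and places no constraint on the slopes there.
\item The step $h_f\ge f$ discards the term $-(n-2)\bigl(1+(1-t^2)\tfrac{(f')^2}{1+f^2}\bigr)A|f'|$. Its cubic part, of size $A|f'|^3/(1+f^2)$, is precisely what obstructs a second zero: for large $F$ it drives $f'\to-\infty$ while $f$ is still positive.
\end{itemize}
What is needed is a quantity whose sign at $t_0$ propagates all the way to $t_2$. Your Riccati fallback only yields information at the single point $t_0$ and supplies no such propagation mechanism.

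\textbf{The missing ingredient.} The paper uses $\Psi:=f\,h_f-\frac{1}{n-2}$.
\begin{itemize}
\item A direct computation gives $\Psi'=A\,\bigl(t^{-1}ff'-(f')^2-ff''\bigr)$, so $\Psi'(t_0)=0$.
\item One has $\Psi(t_0)=F^2-\frac{1}{n-2}$, which is positive once $F\ge 2/\sqrt n$, while $\Psi(t_2)=-\frac{1}{n-2}<0$.
\item Eliminating $f''(t_0)$ via \eqref{eqn:ode-star} writes $\tfrac12F^{-2}\Psi''(t_0)$ as a quadratic in $M/F$, where $M=f'(t_0)$, with leading coefficient $\frac{(n-3)F^2-1}{1+F^2}$. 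A discriminant estimate, uniform in $k$, shows this quadratic is nonnegative for \emph{every} $M$ once $F\ge 2/\sqrt n$. This is where the constant comes from.
\item At any critical point $t_*>t_0$ of $\Psi$, the ODE gives $\Psi''(t_*)=(\text{positive factor})\cdot A(t_*)\,\Psi(t_*)$. The positivity of the factor uses $-f'>0$ and $h_f>0$, which come from your one-sided reduction.
\end{itemize}
Hence $\Psi$ cannot turn around while positive, so it stays positive on $(t_0,t_2]$, contradicting $\Psi(t_2)<0$.
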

\begin{proof}
    Working with the rescaled function $\sqrt{\lambda} f$, which solves~\eqref{eqn:ode-star}, we need only prove the result when $\lambda=1$.
    As in~\cite{FTW-1}*{\S 3}, we consider the quantity $\Psi(t) := f (f - Af') - \frac{1}{n-2}$ for a solution of equation~\eqref{eqn:ode-star}.
    The results of~\cite{FTW-1}*{\S 3.3} show that $\Psi'(t_{\alpha}) = 0$, due to
    \begin{align*}
    \Psi'(t) &= A(t) \bigl( t^{-1} ff' - (f')^2 -ff''), \qquad \Psi''( \sqrt{\tfrac{k-1}{n-2}}) = 2 \bigl( t^{-1} f f' - (f')^2 - ff'' \bigr)|_{t = t_{\alpha}}.
    \end{align*}
    For brevity, we set $F:= f( t_{\alpha})$ and $M := f'(t_{\alpha})$.
    Using $A( t_{\alpha}) = 0$, we solve for $f''(t_{\alpha})$ in equation~\eqref{eqn:ode-star}, then apply it to the computation of $\Psi''( t_{\alpha})$ to obtain
    \begin{equation}\label{eqn:Psi-''-expression}
    \tfrac{1}{2} F^{-2}  \Psi''(t_{\alpha}) = \frac{(n-3) F^2 - 1}{1+F^2} \Bigl( \frac{M}{F} \Bigr)^2 - \frac{(2k-n)}{(n-k-1)\sqrt{\frac{k-1}{n-2}}} \frac{M}{F} + \frac{(n-1)(n-2)}{n-k-1} .
    \end{equation}
    For $F > \frac{1}{\sqrt{n-3}}$, the expression of $\Psi''( t_{\alpha}) $ is a convex quadratic $a(F) (\frac{M}{F})^2 + b \frac{M}{F} + c$ in $\frac{M}{F}$, so it attains the minimum value $c - \frac{b^2}{4 a(F)}$.
    Let us write
    \begin{align*}
        4 ac - b^2 &= 4 a \tfrac{(n-1)(n-2)}{n-k-1} - \tfrac{(2k-n)^2}{(n-k-1)^2 \frac{k-1}{n-2}} = \tfrac{n-2}{(n-k-1)^2 (k-1)} \bigl[ 4 a (n-1)(n-k-1)(k-1) - (2k-n)^2 \bigr]
    \end{align*}
    which is non-negative if and only if $a \geq \max_{2 \leq k \leq n-2} \frac{(2k-n)^2}{4(n-1)(n-k-1)(k-1)}$.
    The function $x \mapsto \frac{(2x-m)^2}{x(m-x)}$ is a convex parabola on $[0,m]$, symmetric over the midpoint $\frac{m}{2}$, hence it attains its maximum over $[1,m-1]$ at $x=1$.
Applying this observation for $x=k-1 \in [1,n-3]$ and $m = n-2$, we find that $4 ac - b^2 \geq 0$ provided that $a \geq \frac{(n-4)^2}{4(n-1)(n-3)}$.
Finally, taking $F \geq \frac{2}{\sqrt{n}}$, we compute
\[
a(F) - \frac{(n-4)^2}{4(n-1)(n-3)} =  \frac{(n-2)(n-4)(11n - 26)}{4(n-3)(n-1)(n+4)} \geq 0, \qquad \text{for } \; n \geq 4.
\]
Thus, $\Psi''( t_{\alpha}) \geq 2 F^2 \bigl[ \min_M a(F) (\frac{M}{F})^2 + b \frac{M}{F} + c \bigr] \geq 0$ holds for $F = f( t_{\alpha}) \geq \frac{2}{\sqrt{n}}$ and any $M \in \bR$.

Since the properties of $f$ are invariant under the involution $( k ,f(t) )\leftrightsquigarrow (n-k, f(\sqrt{1-t^2}) )$, which preserves solutions by Lemma~\ref{lemma:change-of-variables}, we may assume without loss of generality that $f'(t_{\alpha}) \leq 0$, since $f'(t_{\alpha}) > 0$ would imply that $\hat{f}'(t_{\hat{\alpha}}) < 0$ for $\hat{f}(t) := f(\sqrt{1-t^2})$ and $t_{\hat{\alpha}}:= \sqrt{\frac{n-k-1}{n-2}}$.
Having assumed that the function $f$ attains two zeros, we know from Proposition~\ref{prop:unifying-proposition} that these occur with $t_1 < t_{\alpha} < t_2$, while $h = f - Af'$ is strictly positive on $[t_1,t_2]$ and $f'<0$ on $[t_{\alpha},t_2]$.
We now recall the arguments of~\cite{FTW-1}*{Proposition 3.16}: at a critical point $t_*$ of $\Psi$, we have 
\begin{equation}\label{eqn:Psi-at-critical-point}
\Psi'(t_*)= 0 \implies \Psi''(t_*) = \frac{2 (n-2)^2 (-f') (f - A f')}{(1-t_*^2) (1+f^2)} \left( 1 + \frac{(1-t_*^2) (f')^2}{1 + f^2} \right) A (t_*)  \Psi(t_*).
\end{equation}
Since $(-f')(f - Af') > 0$ on $[t_{\alpha}, t_2]$, equation~\eqref{eqn:Psi-at-critical-point} implies that $\on{sgn} \Psi''(t_*) = \on{sgn} \Psi(t_*)$ at a critical point $t_* > t_{\alpha}$, due to $A(t_*) > 0$.
Therefore, any solution of equation~\eqref{eqn:ode-star} with $\{ \Psi(t_{\alpha}) > 0, \Psi''( t_{\alpha}) \geq 0 \}$ must have $\Psi$ strictly positive and strictly increasing for $t > t_{\alpha}$, so $f$ cannot become zero after $t_{\alpha}$.
For $F \geq \frac{2}{\sqrt{n}}$, we find $\Psi( t_{\alpha}) = F^2 - \frac{1}{n-2} \geq \frac{3n-8}{n(n-2)} > 0$ for $n \geq 4$, so the above arguments imply that $f$ cannot reach a zero $t_2 > t_{\alpha}$; this produces a contradiction, proving the claimed bound.
\end{proof}

\begin{lemma}\label{lemma:slope-infinity-solution}
    Fix some $t_1 < \sqrt{\frac{k-1}{n-2}}$ and let $f^{(M)}$ denote the solution of equation~\eqref{eqn:capillary-ODE}, for $\lambda>0$, with initial data $\{ f^{(M)}(t_1) =0, f^{(M)}{}'(t_1) = M \}$.
    Then, there exists a solution $f^{(\infty)}$ of equation~\eqref{eqn:capillary-ODE} defined on an interval $[t_1,b]$ such that $f^{(\infty)}(t_1) = 0$ and $f^{(M)} \to f^{(\infty)}$ in $C^{\infty}_{\textup{loc}}(t_1,b)$ as $M \to \infty$.
    We call $f^{(\infty)}$ the solution of equation~\eqref{eqn:capillary-ODE} with initial data $\{ f^{(\infty)}(t_1) =0 , f^{(\infty)}{}'(t_1) = \infty\}$.
\end{lemma}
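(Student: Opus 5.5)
Fixing $\lambda=1$ by the rescaling used in Proposition~\ref{prop:unifying-proposition}, the idea is to avoid the infinite initial slope by making $f$ the independent variable near $t_1$. On the region where $f'>0$, write the solution as an inverse function $t = T(f)$, with $T(0)=t_1$ and $T'(0)=1/M$. Using $f' = 1/T'$ and $f'' = -T''/(T')^3$, and multiplying \eqref{eqn:capillary-ODE} by $-(T')^3$, gives
\[
(1-T^2)\,T'' - (T')^2\bigl(f T' - T\bigr) - (n-2)\Bigl( (T')^2 + \tfrac{(1-T^2)}{1+ f^2}\Bigr)\bigl(f T' - A(T)\bigr) = 0 .
\]
I may have mishandled powers of $T'$ in this line and will recheck it. The essential feature is that the equation is polynomial in $T'$, with leading coefficient $1-T^2$ in front of $T''$. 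Since $1-T^2>0$ and $A(T)$ is smooth near $t_1\in(0,1)$, this is a regular second-order ODE in $(T,T')$ near $(t_1,0)$. The initial data $(T(0),T'(0))=(t_1,1/M)$ therefore depend continuously, indeed smoothly, on $1/M\in[0,\varepsilon)$. The value $1/M=0$ is admissible and yields a solution $T_\infty$ on some interval $f\in[0,\delta]$.

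Next I check that $T_\infty$ really inverts to a solution. At $f=0$ with $T'=0$, the equation gives $(1-t_1^2)T''(0) = -(n-2)(1-t_1^2)A(t_1) > 0$, because $t_1<\sqrt{(k-1)/(n-2)}$ means $A(t_1)<0$. So $T_\infty'>0$ for small $f>0$, and $T_\infty$ is strictly increasing on $(0,\delta]$. Its inverse $f^{(\infty)}$ is then defined on $[t_1, T_\infty(\delta)]$, satisfies \eqref{eqn:capillary-ODE} on the open interval, and has $f^{(\infty)}(t_1)=0$ and $f^{(\infty)}{}'(t_1^+)=+\infty$. The behaviour is square-root-like: $f^{(\infty)}\sim c\sqrt{t-t_1}$. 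This is where the hypothesis $t_1<\sqrt{(k-1)/(n-2)}$ is used.

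For convergence, continuous dependence gives $T_M\to T_\infty$ in $C^2([0,\delta])$, and uniformly for $M$ large we have $T_M'\ge c>0$ on $[\delta/2,\delta]$. On compact subintervals of $(t_1,T_\infty(\delta))$ this yields $f^{(M)}\to f^{(\infty)}$ in $C^2$, after possibly shrinking $\delta$ slightly to handle the endpoint. At the point $t_0=T_\infty(\delta/2)$ the data $(f^{(M)}(t_0),f^{(M)}{}'(t_0))$ converge to those of $f^{(\infty)}$. Standard continuous dependence for the regular ODE \eqref{eqn:capillary-ODE} on $(0,1)$ then propagates $C^2$ convergence to any compact subinterval of the maximal interval $(t_1,b)$ of $f^{(\infty)}$. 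Differentiating the equation bootstraps this to $C^\infty_{\rm loc}$.

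The main obstacle is the inverse-function step: deriving the hodograph equation correctly and verifying that it is nondegenerate at $T'=0$, in particular that the coefficient of $T''$ does not vanish there. If that fails, I would fall back on the phase variable $\phi=\arctan f'$ or on arclength parametrization of the profile curve $(t,f)$. In either of these the vertical tangent corresponds to regular initial data, and the same continuous-dependence argument applies.
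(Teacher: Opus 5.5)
Your proposal is correct, and it takes a genuinely different route from the paper. Your hodograph equation is right as written: substituting $f'=1/T'$ and $f''=-T''/(T')^3$ and multiplying by $-(T')^3$ gives exactly your display. The coefficient of $T''$ is $1-T^2\neq 0$, and at $(f,T,T')=(0,t_1,0)$ one gets $T''(0)=-(n-2)A(t_1)>0$, so no fallback is needed.

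\textbf{The paper's route.} The paper stays in the $t$-variable and proves $M$-independent a priori bounds. On $\{f'>0\}$ with $t<T_1<\sqrt{\frac{k-1}{n-2}}$ it derives
\[
f''<cf'-(n-2)|A(T_1)|(1-f^2)(f')^3 .
\]
It then compares $f'$ with the Bernoulli solution of $y'=cy-y^3$ that blows up at $t_1$. Integrating gives $f^{(M)}\le\frac12$ on an interval $[t_1,L_1]$ with $L_1=L_1(n,k,t_1)$, together with bounds on $(f^{(M)})'$ and $(f^{(M)})''$ on $[t_1+\varepsilon,L_1]$. A limit is then extracted by Arzel\`a--Ascoli and a diagonal argument, and extended as a maximal solution.

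\textbf{What your route gains.} Your regularization gets more with less work:
\begin{enumerate}
\item[(a)] convergence of the whole family, whereas the compactness argument by itself only yields subsequential limits;
\item[(b)] uniqueness of the vertical shot;
\item[(c)] its $\sqrt{t-t_1}$ asymptotics and $C^{1/2}$ regularity for free;
\item[(d)] smooth dependence on $(t_1,\lambda,1/M)$.
\end{enumerate}
Point (d) is exactly what the paper must establish separately in Lemma~\ref{lemma:convergence-of-solutions}, via expansions in $\sqrt{|t-t_1|}$ and the substitution $u=f^2$. In addition, your $T$-equation is invariant under $f\mapsto -f$, which shows directly that the profile curve is smooth through its vertical tangent, as required for doubling.

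\textbf{What the paper's route gains.} It gives explicit quantitative estimates, and the same Bernoulli comparison is reused in Step~2 of Proposition~\ref{prop:shoot-infinite-slope}. Your interval $[0,\delta]$, by contrast, comes only from soft continuous dependence.

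\textbf{One point to make explicit.} For large finite $M$, you need $T_M'>0$ on all of $[0,\delta]$. This follows from $T_M'(0)=1/M>0$ and $T_M''\to T_\infty''>0$ uniformly, after shrinking $\delta$. Then $T_M^{-1}$ is defined on $[t_1,T_M(\delta)]$, and by uniqueness for the regular initial value problem it coincides with $f^{(M)}$.
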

\begin{proof}
If $\lambda=0$, the result is clear from Lemma~\ref{lemma:harmonic-barrier}, so we consider $\lambda>0$ and rescale each $f$ by $\sqrt{\lambda}$, if needed, to assume that $\lambda=1$ and $f$ solves equation~\eqref{eqn:ode-star}.
The function $A(t)$ is strictly increasing, so $A(t_1) < A(t) < A(T_1) < 0$ for any $t \in (t_1,T_1)$ with $T_1 < \sqrt{\frac{k-1}{n-2}}$.
Therefore,
\[
f - Af' \geq f - A(T_1) f', \qquad 1 + (1-t^2) \tfrac{(f')^2}{1+f^2} > 1 + (1-t^2) (f')^2 (1-f^2)
\]
while $f' \geq 0$; the latter inequality is due to $\frac{1}{1+x}>1-x$.
Combining these bounds in equation~\eqref{eqn:ode-star} produces the inequality
\begin{equation}\label{eqn:f''-inequality}
(1-t^2) f'' + (f - tf')+ (n-2) \bigl[ 1 + (1-t^2) (f')^2 (1-f^2) \bigr](f - A(T_1) f') < 0.
\end{equation}
Given a point $b > t_1$, we consider the following property:
\begin{equation}\label{eqn:uniform-boundedness-property}
    \text{for any small} \; 0 < \ve < \tfrac{b - t_1}{4}, \qquad \limsup_{M \to \infty} \sup_{ [t_1 + \ve, b-\ve] }  \| f^{(M)} \|_{C^2} \leq C(\ve)
\end{equation}
for a constant $C(\ve)$ that is independent of $M$.
On any interval where the property~\eqref{eqn:uniform-boundedness-property} holds, we can now apply the Arzel\`a-Ascoli theorem, together with a standard diagonal argument along the compact exhaustion $[t_1 + \frac{1}{j}, b - \frac{1}{j}]$ (for $j \gg 1$), to deduce that $f^{(M)} \to f^{(\infty)}$ in $C^2_{\text{loc}}(t_1,b)$, and therefore in $C^{\infty}_{\text{loc}}$ because the $f^{(M)}$ are solutions of the elliptic equation~\eqref{eqn:ode-star}.
Moreover, $f^{(\infty)} \in C^{\infty}(t_1,b) \cap C^{\frac{1}{2}}([t_1,b))$, and taking limits makes $f^{(\infty)}{}'(t_1) = \infty$ by construction.

We now prove that there exists some $L_1 \in (t_1, \sqrt{\frac{k-1}{n-2}})$, depending only on $(n,k,t_1)$, such that the property~\eqref{eqn:uniform-boundedness-property} holds on $[t_1,L_1]$.
Dropping the positive term
\[
f + (n-2) (f - A(T_1) f') + (n-2) (1-t^2) \tfrac{(f')^2}{1+f^2} f
\]
from equation~\eqref{eqn:ode-star}, we may further simplify the inequality~\eqref{eqn:f''-inequality} into
\begin{equation}\label{eqn:further-rearrangement}
    f'' < c f' - (n-2) \, |A(T_1)| \, (1-f^2) (f')^3
\end{equation}
as long as $f' > 0$.
Here, we divided by $1-t^2$ and used $\frac{t}{1-t^2} \leq c := \frac{\sqrt{(k-1)(n-2)}}{n-k-1}$ for $t \leq \sqrt{\frac{k-1}{n-2}}$.
On the interval $[t_1,L]$ where $f \leq \sqrt{1-\delta}$, for some $\delta \in (0,1)$, we use $1-f^2 \geq \delta$ to see that the function $\tilde{y}(t) := \sqrt{(n-2) \, |A(T_1)| \, \delta} f'(t)$ satisfies $\tilde{y}'(t) < c \cdot \tilde{y}(t) - \tilde{y}(t)^3$ due to~\eqref{eqn:further-rearrangement}.
The associated Bernoulli equation $y'(t) = c\cdot y(t) - y(t)^3$ has the explicit solution $y(t) = \frac{\sqrt{a} e^{c(D_1 + t)}}{\sqrt{e^{2c(D_1+t)}}-1}$.
Imposing the initial condition $\lim_{t \downarrow 0} y(t) = \infty$ resolves $D_1 = 0$ and $y(t) = \frac{\sqrt{c} e^{ct}}{\sqrt{e^{2ct}-1}}$.
Applying the ODE comparison principle, we conclude that if $f' \geq 0$ on $[t_1,L]$, then
\begin{equation}\label{eqn:ODE-comparison}
    f' \Bigl(t_1 + \frac{s}{c(T_1)} \Bigr) \leq \sqrt{\frac{c(T_1)}{(n-2) \, |A(T_1)| \, \delta} } \, \cdot \frac{e^{s}}{\sqrt{e^{2s}-1}} \qquad \text{for } \; s \in \Bigl[0,\frac{L - t_1}{c(T_1)} \Bigr].
\end{equation}
To bound the above terms further, we use $e^{-2s} \leq \frac{1}{1+2s}$ and $1 - e^{-2s} \geq \frac{2s}{1+s}$ to estimate 
\[
\int_0^s \frac{dx}{\sqrt{1 - e^{-2x}}} \leq \int_0^s \sqrt{\frac{1 + 2x}{2x}} \leq \int_0^s \Bigl( \frac{1}{\sqrt{2x}} + \sqrt{\frac{x}{2}} \Bigr) \, dx = \sqrt{2s} \Bigl( 1 + \frac{s}{3} \Bigr).
\]
Let $L_*$ be the first instance such that $f(L_*) = \sqrt{1-\delta^2}$, if this occurs, or $L_* = \sqrt{\frac{k-1}{n-2}}$ otherwise.
Then, we can integrate~\eqref{eqn:ODE-comparison} on $[0, \frac{L_* - t_1}{c}]$ and bound $L_* - t_1 \leq \sqrt{\frac{k-1}{n-2}}$ to obtain
\begin{equation}\label{eqn:key-integral-bound}
\sqrt{1-\delta^2} \leq \sqrt{\frac{c}{(n-2) \, |A(T_1)| \, \delta}} \int_0^{\frac{L_* - t_1}{c}} \frac{dx}{\sqrt{1 - e^{-2x}}} \leq C(n,k) \sqrt{\frac{2(L_* - t_1)}{(n-2) \, |A(T_1)| \,  \delta}} \; ,
\end{equation}
which implies that $L_* - t_1 \geq C(n,k) |A(T_1)| \delta(1-\delta^2)$.
Finally, we may choose $T_1 = \sqrt{\frac{k-1}{n-2}} - \ve$ (for small $\ve>0$, depending on $t_1$) and $\delta = \frac{1}{2}$ to guarantee the existence of an $L_1(n,k,t_1)>t_1$ such that $f(t)<\frac{1}{2}$ on $[ t_1, L_1]$, independently of $f'(t_1)$.
Applying the bound~\eqref{eqn:ODE-comparison}, we conclude that
\[
\sup_M \sup_{[t_1,L_1]} f^{(M)} \leq \tfrac{1}{2}, \qquad \sup_M \sup_{[t_1+\ve,L_1]} f^{(M)}{}' \leq C(\ve) \quad \text{for any } \; \ve>0,
\]
whereby the inequality~\eqref{eqn:further-rearrangement} ensures that $\sup_M \sup_{[t_1,+\ve,L_1]} f^{(M)}{}'' \leq C(\ve)$ as well, uniformly in $M$.
This shows that~\eqref{eqn:uniform-boundedness-property} is satisfied on $[t_1,L_1]$ as desired.

Using the above construction, we now take $\ve>0$ and extend $\tilde{f}^{(\infty)}$ to be the maximal solution of equation~\eqref{eqn:ode-star} with initial condition 
\[
\{ f^{(\infty)}(t_1+\ve) = \lim_{M \to \infty} f^{(M)}(t_1+\ve), \quad f^{(\infty)}{}'(t_1+\ve) = \lim_{M \to \infty} f^{(M)}{}'(t_1+\ve) \}.
\]
The previous discussion shows that $\tilde{f}^{(\infty)} = f^{(\infty)}$ on $[t_1+\ve,b_*]$, where $\ve>0$ is arbitrary and $b_* \leq 1$ is the maximal time of definition of $f^{(\infty)}$.
Since the $f^{(M)}$ and $f^{(\infty)}$ are solutions of~\eqref{eqn:ode-star}, we conclude that $f^{(M)} \to f^{(\infty)}$ in $C^{\infty}_{\text{loc}}(t_1,b_*)$.
\end{proof}

\begin{lemma}\label{lemma:convergence-of-solutions}
    Consider an interval $[d_1,d_2] \subset (0, \sqrt{\frac{k-1}{n-2}})$ and fix $a \in (0,\infty]$.
    Suppose that the solution $f_{t_1}$ of equation~\eqref{eqn:capillary-ODE} with initial data $\{ f(t_1) = 0, f'(t_1) = a \}$ reaches a second zero $\tau(t_1) > \sqrt{\frac{k-1}{n-2}}$ with finite derivative when $t_1 = d_1$.
    Then, either this property holds for all $t_1 \in [d_1, d_2]$, or there exists a $t_* \in (d_1,d_2]$ so that the solution $f_{t_*}$ has a zero $\tau(t_*) > \sqrt{\frac{k-1}{n-2}}$ with $f'_{t_*}(t) \to - \infty$ as $t \uparrow \tau(t_*)$.
\end{lemma}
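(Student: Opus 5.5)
We argue by continuity in $t_1$. Let $G\subset[d_1,d_2]$ be the set of $t_1$ for which $f_{t_1}$ reaches a second zero $\tau(t_1)>\sqrt{\frac{k-1}{n-2}}$ with finite derivative. By hypothesis $d_1\in G$. If $G=[d_1,d_2]$ we are done. Otherwise set $t_*:=\sup\{s\in[d_1,d_2]: [d_1,s]\subset G\}$. The plan is to show three things: $G$ is relatively open, so the first failure is not attained inside the good set; the limit at $t_*$ exists with a uniform upper bound; and failure at $t_*$ can only occur by a vertical slope at the second zero.

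\textbf{Openness.} For $t_1\in G$ we have $f_{t_1}(\tau)=0$ with $f'_{t_1}(\tau)\in(-\infty,0)$, and $f'_{t_1}\neq 0$ at the zero by ODE uniqueness. Since $\tau<1$, the equation~\eqref{eqn:capillary-ODE} is regular near $[t_1+\ve,\tau]$. We get continuous dependence on initial data for finite $a$, and for $a=\infty$ we use the locally uniform convergence of Lemma~\ref{lemma:slope-infinity-solution}, which must be made uniform in $t_1$ on compact subsets of $(0,\sqrt{\frac{k-1}{n-2}})$. Indeed, the constants $L_1$ and $C(\ve)$ from that proof depend only on $(n,k,t_1)$ and are locally uniform in $t_1$. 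The transversal zero then persists by the implicit function theorem, so $G$ is relatively open and $t_*\notin G$ unless $t_*=d_2\in G$.

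\textbf{Uniform bounds at $t_*$.} Take $s_j\uparrow t_*$ with $s_j\in G$. By Lemma~\ref{lemma:bound-at-sqrt}, $f_{s_j}(\sqrt{\frac{k-1}{n-2}})\leq \frac{2}{\sqrt n}\lambda^{-1/2}$. Proposition~\ref{prop:unifying-proposition} gives each $f_{s_j}$ a single maximum $t_c$, with $h>0$ on $[s_j,\tau(s_j)]$ and $\frac{f}{\psi}$ monotone on each side of $\sqrt{\frac{k-1}{n-2}}$. From these we extract uniform $C^0$ bounds on $f_{s_j}$, and interior $C^2$ bounds on compact subsets of $(s_j,1)$ away from the zeros. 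The limit $f_{t_*}$ is the solution with the limiting data, using again the uniform version of Lemma~\ref{lemma:slope-infinity-solution} when $a=\infty$. We also get $\tau(s_j)\in(\sqrt{\frac{k-1}{n-2}},1)$. We pass to a subsequence $\tau(s_j)\to\bar\tau$ and must rule out $\bar\tau=1$. The plan is a Sturm comparison: near $t=1$ the profile satisfies $f'<0$ and $h>0$, so $f$ is bounded below by a multiple of $\psi$ via~\eqref{eqn:integrating-factor-f-psi}. We expect this to keep $\bar\tau\le 1-\eta$, though this step needs care.

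\textbf{Classifying the failure (main obstacle).} Since $t_*\notin G$, either $f_{t_*}$ stays positive up to its maximal time or its derivative blows up. By $C^\infty_{\rm loc}$ convergence of $f_{s_j}$ to $f_{t_*}$ on $(t_*,\bar\tau)$, together with $f_{s_j}(\tau(s_j))=0$ and monotone decrease past $t_c$, we get $f_{t_*}(t)\to0$ as $t\uparrow\bar\tau$. So $f_{t_*}$ has a zero at $\bar\tau$, and $\bar\tau>\sqrt{\frac{k-1}{n-2}}$ by Proposition~\ref{prop:unifying-proposition}$(ii)$. If $f'_{t_*}(\bar\tau)$ were finite, the zero would be transversal, giving $t_*\in G$, a contradiction. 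Hence $f'_{t_*}\to-\infty$ at $\tau(t_*):=\bar\tau$, since $f'<0$ there and the only way to leave the regular regime at a point of $(0,1)$ is gradient blowup. The hardest part is justifying that the zeros converge, that is, that $f_{t_*}$ does not lose its zero, and excluding $\bar\tau\to 1$. For this we would use the mirror image of Lemma~\ref{lemma:slope-infinity-solution}, i.e.\ the symmetry of Lemma~\ref{lemma:change-of-variables}, applied to backward shots from the second zero. It yields a uniform interval beyond which steep solutions must have already vanished, and so controls $\tau(s_j)$.
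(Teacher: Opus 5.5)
There is a genuine gap, and it is the closedness step you flag yourself. Write $t_\alpha:=\sqrt{\frac{k-1}{n-2}}$. Nothing in the proposal shows that along $s_j\uparrow t_*$ the zeros $\tau(s_j)$ stay in a compact subset of $(t_\alpha,1)$, nor that $f_{t_*}$ is defined up to $\bar\tau$ and vanishes there.

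The mechanism you suggest for $\bar\tau<1$ does not work. On $(t_\alpha,\tau)$ one has $A>0$ and $h>0$, so $(f/\psi)'=-h/(A\psi)<0$. Thus $f/\psi$ is decreasing, and you only get the upper bound $f\le C\psi$, not a lower bound. More fundamentally, everything you use there holds verbatim for $\lambda=0$: $h>0$, $f'<0$ past the maximum, the $\psi$-identity, and Sturm comparison. In that case the shots really do lose their second zero through $\tau(t_1)\uparrow 1$, with no blow-up. By Lemma~\ref{lemma:harmonic-barrier}, $\tau$ is increasing and $\lambda_1(\hat t_{n,k},1)=n-1$, so $\tau(t_1)\to 1$ as $t_1\uparrow \hat t_{n,k}$. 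Any argument excluding $\bar\tau=1$ must therefore use $\lambda>0$.

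Two further steps are also unsupported. Monotone decrease past $t_c$ does not give $f_{t_*}(t)\to 0$ as $t\uparrow\bar\tau$: the $f_{s_j}$ could fall from a fixed positive height within a shrinking window. And Lemma~\ref{lemma:bound-at-sqrt} together with the monotonicity of $f/\psi$ (which degenerates at $t_\alpha$) gives neither a bound on $\max f_{s_j}$ nor the interior $C^2$ bounds you assert.

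The missing ingredient is the cubic term in \eqref{eqn:capillary-ODE}. Let $M$ be a uniform bound on $f$; one is available, e.g., by running the $\Psi$-argument from the proof of Lemma~\ref{lemma:bound-at-sqrt} at $t_c$ instead of $t_\alpha$. On the decreasing branch, for $t\ge T>t_\alpha$, the function $y:=-f'$ satisfies
\[
y' \;\ge\; \frac{t+(n-2)A(t)}{1-t^2}\, y \;+\; \gamma\, y^3, \qquad \gamma:=\frac{(n-2)\lambda A(T)}{1+\lambda M^2}>0 .
\]
This gives two consequences.
\begin{itemize}
\item Dropping the linear term yields, uniformly in $j$,
\[
|f'_{s_j}(t)|\le (2\gamma(\tau(s_j)-t))^{-1/2}, \qquad f_{s_j}(t)\le \sqrt{2(\tau(s_j)-t)/\gamma}.
\]
This rules out the cliff. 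It is the correct form of your ``mirror image of Lemma~\ref{lemma:slope-infinity-solution}'', but by itself it does not control $\tau(s_j)$.
\item Keeping the linear term, whose coefficient behaves like $\frac{n-k}{2(1-t)}$ with $n-k\ge 2$, and integrating the resulting linear inequality for $y^{-2}$, shows $|f'_{s_j}(t)|\to 0$ at each fixed $t$ if $\tau(s_j)\to 1$. This forces a trivial limit, so it excludes $\bar\tau=1$; this is exactly where $\lambda>0$ enters.
\end{itemize}
Separately, $h_{t_*}\ge 0$ gives $(1-t^2)f''\le t f'$, which rules out $f'_{t_*}\to+\infty$.

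The paper argues differently. It analyzes vertical-slope points locally, using square-root expansions with coefficients uniform in $(t_1,\lambda,c_0)$. It also uses that the equation for $u=f^2$ is regular across $u=0$ when $u'=-\frac{2}{(n-2)\lambda A}$. It then reads off persistence of the zero from continuity of $t_1\mapsto\tau(t_1)$. That local analysis also supplies the uniqueness of the $a=\infty$ shot that your openness step needs. The uniform bounds of Lemma~\ref{lemma:slope-infinity-solution} alone only give subsequential limits.
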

\begin{proof}
    For $a \in (0,\infty)$, the result follows by the smooth dependence of ODE solutions on the initial parameters.
    For $a = \infty$, the mechanism is heuristically the same, and the analogous property is valid upon formalizing the convergence of solutions with smoothly varying data and vertical initial slope (per Lemma~\ref{lemma:slope-infinity-solution}).    
    Let us examine the local behavior of a solution to equation~\eqref{eqn:capillary-ODE} near a point $t_1$ with $\{ f(t_1) = c_0, f'(t_1) = \pm \infty \}$.
As in Proposition~\hyperref[prop:unifying-proposition]{\ref{prop:unifying-proposition}$(iii)$} and Lemma~\ref{lemma:harmonic-barrier}, we will consider $t$ approaching with $- (t - t_1) A(t_1) > 0$, i.e., $t > t_1$ if $t_1 < \sqrt{\frac{k-1}{n-2}}$ (forward solution) and $t < t_1$ if $t_1 > \sqrt{\frac{k-1}{n-2}}$ (solution with endpoint $t_1$).
As $|f'| \to \infty$ with $f \to c_0$, the factor $1 + (1-t^2) \frac{\lambda (f')^2}{1+\lambda f^2} \sim (1-t_1^2) \frac{\lambda (f')^2}{1 + \lambda c^2}$, and $f - Af' \sim - A(t_1) f'$.
Since $t_1 < 1$, the equation has leading-order balance
\begin{equation}\label{eqn:f''-equation-balancing-blowup}
(1-t_1^2) f'' \sim (n-2) (1-t_1^2) \tfrac{\lambda (f')^2}{1 + \lambda c^2} A(t_1) f' \implies f'' \sim \tfrac{(n-2) \lambda A(t_1)}{1 + \lambda c^2} (f')^3,
\end{equation}
with solution $f'(t)^2 \sim - \frac{1 + \lambda c^2}{2 (n-2) A(t_1)} \frac{1}{t - t_1}$ for $- (t - t_1)A(t_1) > 0$.
Integrating once shows that $f$ has a square-root singularity at the point $t_1$, with 
\begin{equation}\label{eqn:f(t)f'(t)-expansion}
\begin{split}
f(t) &= c_0 + \sqrt{\tfrac{2 (1 + \lambda c_0^2)}{\lambda(n-2) |A(t_1)|}} |t - t_1|^{\frac{1}{2}} + O( |t - t_1|), \\
f'(t) &= \on{sgn}(t-t_1) \sqrt{\tfrac{1 + \lambda c_0^2}{2 \lambda (n-2) |A(t_1)|}} |t - t_1|^{- \frac{1}{2}} + O (1).
\end{split}
\end{equation}
Note that $\on{sgn}(t - t_1) = - \on{sgn} A(t_1)$, by assumption, so $f(t)$ has a series expansion in $\sqrt{|t - t_1|}$, and 
\[
f''(t) = \mp \tfrac{1}{4} \sqrt{\tfrac{2 (1 + \lambda c_0^2)}{\lambda (n-2) |A(t_1)| }} \, |t - t_1|^{- \frac{3}{2}} + O ( |t - t_1|^{- \frac{1}{2}}).
\]
Iterating this discussion produces all the terms in the expansion of $f(t) = \sum_{k=0}^{\infty} c_k(t_1,\lambda,c_0) s^k$ in terms of $s := \sqrt{|t - t_1|}$.
Notably, we observe that the coefficients $c_k(t_1,\lambda,c_0)$ are uniformly bounded on any compact domain of the form $(t_1,\lambda) \in [d_1,d_2] \times [\delta,\infty)$, for any $\delta>0$ and $[d_1,d_2] \subset (0,\sqrt{\frac{k-1}{n-2}})$.

In terms of the function $u := f^2$, the equation~\eqref{eqn:capillary-ODE} is expressible as
\begin{equation}\label{eqn:u-equation}
\begin{split}
0 &= (1-t^2) u'' + (2u - tu') \\
& \quad - (1-t^2) \frac{(u')^2}{2u} \left( 1 + (n-2) \frac{\lambda A u'}{2(1+ u)} \right) + (n-2) \left[  2u - Au' + (1-t^2) \frac{\lambda (u')^2}{2(1+ u)} \right] 
\end{split}
\end{equation}
which is regular across $\{ u = 0 \}$ provided that $u'(t_1)=0$ or $u'(t_1) = - \frac{2}{(n-2)\lambda A(t_1)}$ at the zero.
An entirely analogous property holds for the function $u := (f - c_0)^2$ if $f$ in the general case considered above.
Applying the above uniformity discussion, let us define
\[
f_{t_1, \lambda, c_0} (t) := \text{solution of~\eqref{eqn:capillary-ODE} with initial data } \; \{ f_{t_1,\lambda, c_0}(t_1) = c_0 \;, \; \on{sgn}A(t_1) f'_{t_1,\lambda, c_0}(t_1) = -\infty \}.
\]
Then, we conclude that for any sequence of $(t_1^{(i)},\lambda^{(i)},c_0^{(i)}) \in [d_1,d_2] \times [\delta, \infty) \times [-M,M]$ as above, converging to some $(t_1^{(\infty)}, \lambda^{(\infty)}, c_0^{(\infty)}) \in [d_1,d_2] \times [\delta, \infty) \times [-M,M]$, we have uniform bounds on all the coefficients $c_k(t_1^{(i)}, \lambda^{(i)}, c_0^{(i)})$ of the series expansions $f^{(i)} := f_{t_1^{(i)}, \lambda^{(i)}, c_0^{(i)}}$ and may therefore extract subsequential limits $c_k^{(\infty)} := \lim_{j_k \to \infty} c_k^{(j_k)}$.
By a diagonalization argument, this limit is realizable by a single (sub)sequence of functions $f^{(i)}$.
Moreover, the uniform bounds on the coefficients (also obtained as a consequence of the uniform estimates in Lemma~\ref{lemma:slope-infinity-solution}) imply a uniform lower bound on the radius of convergence, so the series $f^{(\infty)} := \sum_{k=0}^{\infty} c_k(t_1^{(\infty)}, \lambda^{(\infty)}, c_0^{(\infty)}) |t - t_1^{(\infty)}|^{\frac{k}{2}}$ is defined and $C^{\infty} ( t_1^{(\infty)},b) \cap C^{\frac{1}{2}}( [t_1^{(\infty)},b))$ on an interval, with $f^{(i)} \to f^{(\infty)}$.
The corresponding functions $u^{(i)} := (f^{(i)})^2$ solve equation~\eqref{eqn:u-equation}, so they satisfy $u^{(i)} \to u^{(\infty)}$ in $C^{\infty}$ on any common interval where $\sup_i \|u^{(i)}\|_{C^2} < \infty$.

Finally, we may suppress the previous notation to $f_{t_1} := f_{t_1,\lambda,0}$ for $c_0 = 0$ and $\lambda \in (0,\infty]$ fixed.
As in the case of $a < \infty$, we now define the instance $t_* \in [d_1,d_2]$ by
\begin{equation}\label{eqn:t-star-definition}
    t_* := \sup \left\{ t_1 \in [d_1,d_2] : f_{t_1} \; \text{reaches another zero} \right\} \, .
    \end{equation}
It follows by the above discussion applied to $f_{t_1}$ directly, or to $u_{t_1} := f_{t_1}^2$, that the smooth dependence of ODE solutions on the initial parameters holds for this family.
Notably, if some $f_{t_1}$ reaches another zero at $\tau(t_1) > \sqrt{\frac{k-1}{n-2}}$ with $| f'_{t_1}(\tau(t_1)) | < \infty$, then so does $f_{t_1+\delta}$, for $\delta \in (-\delta_0, \delta_0)$ sufficiently small.
By continuity of the map $t_1 \mapsto \tau(t_1)$, we also find $f_{t_*}(\tau(t_*)) = 0$.
We conclude that either $t_* = d_2$, or $t_* < d_2$ and at the limit endpoint $t_*$, the solution $f_{t_*}$ reaches zero at $\tau(t_*)$ with $f'_{t_*} (\tau(t_*)) = - \infty$.
\end{proof}

The next key step is to produce a small window near $\sqrt{\frac{k-1}{n-2}}$, from which no solutions, \textit{shots}, reach a second zero; see Proposition~\ref{prop:small-breather}.
The following problem 
 \begin{equation}\label{eqn:phi-limit-equation}
    \phi'' + c \frac{(\phi')^2}{1 + \phi^2} ( \phi - 2 s \phi') = 0, \qquad \{ \phi(-1) = 0, \; \phi'(-1) = \infty \}
\end{equation}
appears as the limit problem in the rescaled variable $s = \epsilon_i^{-1} (t - \sqrt{\frac{k-1}{n-2}})$ for solutions of equation~\eqref{eqn:ode-star} starting from $\sqrt{\frac{k-1}{n-2}} - \epsilon_i$, where $\epsilon_i \downarrow 0$.
Here, $\phi$ can be understood as the $C^{\infty}_{\text{loc}}$ subsequential limit of solutions $\phi^{(M)}$ to equation~\eqref{lemma:analysis-of-limit-equation} with data $\{ \phi^{(M)}(-1) = 0, \phi^{(M)}{}'(-1) = M \}$ as $M \to \infty$, by arguing as in Lemma~\ref{lemma:slope-infinity-solution}.
The proof of Proposition~\ref{prop:small-breather}, ruling out solutions with two zeros near $\sqrt{\frac{k-1}{n-2}}$, relies on a blowup argument using the following property of this limiting equation.
\begin{lemma}\label{lemma:analysis-of-limit-equation}
    For $c \geq 2$, let $\phi$ be the maximally extended solution of equation~\eqref{eqn:phi-limit-equation}.
Then, the quantity $\phi - 2 s \phi'$ becomes negative at some $s_* > -1$.
\end{lemma}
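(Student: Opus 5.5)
The plan is to argue by contradiction: assume $h := \phi - 2s\phi' > 0$ on the whole maximal interval of existence, and produce an explicit point $s_*$ where $h$ must vanish.

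\textbf{Structural facts.} As in Proposition~\ref{prop:unifying-proposition}, set $g := \frac{(\phi')^2}{1+\phi^2}$. The equation then reads $\phi'' = -c\, g\, h$, and differentiating $h$ gives
\[
h' = -\phi' + 2cs\, g\, h .
\]
\begin{itemize}
\item Near $s=-1$ the expansion of Lemma~\ref{lemma:convergence-of-solutions} gives $\phi \sim C\sqrt{s+1}$ and $h \to +\infty$.
\item $\phi'$ cannot vanish. If $\phi'(s_0)=0$, then $\phi''(s_0)=0$, and uniqueness for $\phi''=F(\phi,\phi',s)$ with $F(\cdot,0,\cdot)=0$ would force $\phi$ to be constant. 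This contradicts the vertical initial slope, so $\phi'>0$ throughout.
\item While $h>0$ we have $\phi''<0$. Hence $\phi$ is concave and increasing with $\phi'$ bounded away from $s=-1$, so the solution extends to all $s>-1$.
\item For $s \le 0$, $h = \phi + 2|s|\phi' > 0$ automatically. So the crossing must occur at some $s_*>0$, and the whole content of the lemma is a quantitative statement on a bounded window $s\in[0,S]$.
\end{itemize}

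\textbf{Why a bounded-window estimate is needed.} For $s>0$ one has $(\phi/\sqrt{s})' = -h/(2s^{3/2})$. So under the hypothesis $h>0$, $\phi/\sqrt s$ is decreasing, and $\sqrt s$ is exactly the borderline profile with $h\equiv 0$.

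Large-$s$ asymptotics alone will not give the contradiction. When $\phi$ is large, $h\approx \phi$ and $\phi''\approx -c(\phi')^2/\phi$, which makes $\phi^{c+1}$ nearly affine. Thus $\phi \sim s^{1/(c+1)}$, which is compatible with $h>0$. The crossing must therefore happen at moderate $s$, while $\phi'$ is still large because of the infinite initial slope.

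\textbf{Plan for the key estimate.}
\begin{enumerate}
\item Rescale $\phi = \psi/\sqrt c$. This gives
\[
\psi'' + \frac{(\psi')^2}{1+\psi^2/c}\,(\psi - 2s\psi') = 0,
\]
with the same singular initial data.
\item Prove a comparison principle in $c$ for $c\ge 2$ (via a Sturm/ODE-comparison argument on $\psi$ and $\psi'$), so that it suffices to treat the extreme case $c=2$. Alternatively, compare against the $c=\infty$ limit $\psi''+(\psi')^2(\psi-2s\psi')=0$.
\item On $[-1,0]$, bound $\psi'$ from below. Use $\psi'' \ge -(\psi')^2 h$ together with $h \le \psi + 2\psi'$, plus the Bernoulli-type comparison already used in Lemma~\ref{lemma:slope-infinity-solution}.
\item On $[-1,0]$, bound $\psi$ from above using concavity and the square-root expansion.
\item From these bounds, find an explicit $S$ with $2S\psi'(S) > \psi(S)$. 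Then $h(S)<0$, which is the contradiction.
\end{enumerate}

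\textbf{Main obstacle.} The hard step is making this moderate-$s$ comparison sharp enough, since both $\phi$ and $\phi'$ are governed by a nonlinear equation with a singular start. I would first look for explicit sub- and supersolutions of the Riccati-type equation satisfied by $\phi'/h$, mirroring~\eqref{eqn:riccati-for-ratio}. The hypothesis $c\ge 2$ should enter through the coefficient in that Riccati comparison.
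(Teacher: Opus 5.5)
\textbf{Assessment.} Your preliminary facts are correct and match the opening of the paper's proof: $\phi'$ never vanishes, $\phi$ is concave while $h>0$, the solution exists globally, and $h>0$ automatically for $s\le 0$. But there is a genuine gap, because the proposal stops exactly where the proof has to begin. Steps 3--5 contain no estimate that actually produces an $S$ with $h(S)<0$, and the reductions in step 2 cannot work.

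\textbf{Why step 2 fails.} After your rescaling $\phi=\psi/\sqrt c$, the $c=\infty$ problem $\psi''+(\psi')^2(\psi-2s\psi')=0$ has the explicit solution $\psi=\sqrt{1+s}$. It has the same singular data at $s=-1$, and $\psi-2s\psi'=(1+s)^{-1/2}>0$ for \emph{all} $s$. So comparison with that limit can never produce a crossing. Moreover, by continuous dependence on $1/c$, the first crossing point $s_*(c)$ tends to infinity as $c\to\infty$. Hence no pointwise comparison in $c$ can transfer a crossing at a fixed $S$ from $c=2$ to all $c\ge 2$. The hard regime is large $c$, not $c=2$.

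\textbf{The asymptotic heuristic.} Your heuristic is also not self-consistent. The profile $\phi\sim s^{1/(c+1)}$ gives $2s\phi'/\phi=\frac{2}{c+1}$, which is not negligible, so the approximation $h\approx\phi$ fails. A consistent ansatz $\phi\sim As^{\beta}$ requires $2c\beta^2-(c+1)\beta+1=0$, which has no real root for $2\le c<3+2\sqrt2$. For these $c$ there is no large-$s$ regime compatible with $h>0$ at all. The infinite initial slope becomes essential only for $c\ge 3+2\sqrt2$.

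\textbf{The missing idea: reduction of order.} The equation is invariant under $s\mapsto \mu s$. The paper therefore takes $\tau=\operatorname{arcsinh}\phi$ as the independent variable and the scale-invariant quantity $L=\sqrt{1+\phi^2}/h$ as the unknown. Using $2s\tau'=\tanh\tau-L^{-1}$ to eliminate $\tau''$ removes $s$ entirely and gives the closed Riccati equation
\[
L'(\tau)=L^2-(c-1)\tanh\tau\,L+c,\qquad L(0)=0,
\]
where $L(0)=0$ encodes $h(-1)=+\infty$. By contrast, your Riccati equation for $\phi'/h$ in the variable $s$ does not close, since $g$ still involves $\phi$ and $s$.

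\textbf{How the contradiction follows.} Under the contradiction hypothesis, $L$ is finite and positive on all of $[0,\infty)$. The paper checks that $\tau$ really exhausts $[0,\infty)$: for $s\ge0$ one has $\phi''\ge-\frac c2(\phi')^2$, which forces $\phi\to\infty$. On the other hand, $c\tanh\tau$ is a strict subsolution: substituting it gives $L'-L^2+(c-1)\tanh\tau\,L-c=-2c\tanh^2\tau$. Hence $L>c\tanh\tau$, and so $L'>\tanh\tau\,L+c$. Integrating gives $L>c\cosh\tau\arctan(\sinh\tau)>c\cosh\tau$ for $\tau\ge 2$. Then $(c-1)\tanh\tau<\frac12 L$, so $L'>\frac12L^2$, and $L$ blows up at a finite $\tau$. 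That is, $h$ vanishes at a finite $s$, which is the contradiction. Without this closed first-order reduction, or an equivalent one, your plan has no mechanism for locating the crossing.
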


\begin{proof}
Suppose, for contradiction, that the result is false, so the function $h := \phi - 2 s \phi'$ is everywhere non-negative.
    At a point $s_*$ where $\phi'(s_*) = 0$, equation~\eqref{eqn:phi-limit-equation} implies $\phi''(s_*) = 0$.
Since $\phi = 0$ is a solution of~\eqref{eqn:phi-limit-equation}, the uniqueness of solutions of ODE would force $\phi \equiv 0$, contradicting the initial data at $s=-1$.
Thus, $\phi' > 0$ for all $s>0$.
The function $h$ satisfies $h' = 2cs \tfrac{(\phi')^2}{1+\phi^2} h - \phi'$, analogous to the Riccati equation~\eqref{eqn:h'-abbrevaited}.
Since $\phi' >0$, this would imply that $h'(s_*) < 0$ at any point where $h(s_*) = 0$, hence $h(s_*+\delta)<0$ for small $\delta>0$, contradicting $h \geq 0$; therefore, $h>0$ for all $s\geq -1$.
Let $S_b \leq \infty$ denote the blowup time of the solution $\phi$ of~\eqref{eqn:phi-limit-equation}.
We write $\phi'' = - c \frac{(\phi')^2}{1 + \phi^2} h < 0$ to see that $\phi'$ is strictly decreasing and positive for all $s \in (-1,S_b)$, therefore $\phi$ cannot have finite-time blowup at any $S_b < \infty$.
This implies that $\phi$ is defined for all $s \in [-1,\infty)$ and $\phi \in C^{\infty}(-1,\infty) \cap C^{\frac{1}{2}}([-1,\infty))$.
Moreover, $\hat{\ell}:= \lim_{s \to \infty} \phi'(s)$ is finite, with $\hat{\ell} \geq 0$.
In fact, $\hat{\ell} > 0$ is impossible, else $\phi'' < 0$ would produce some $S_0$ with $\phi' < \frac{3}{2} \hat{\ell}$ for $s \geq S_0$.
Then,
\[
h(s) = \phi(s) - 2s \phi'(s) \leq \tfrac{3}{2} \hat{\ell} (s - S_0) + \phi(S_0) - 2 \hat{\ell} s < \phi(S_0) - \tfrac{1}{2} \hat{\ell} s
\]
which becomes negative for $s > S_0 + 4 \hat{\ell}^{-1} \phi(S_0)$, contradicting $h(s)>0$.
Thus, $\lim_{s \to \infty} \phi'(s) = 0$.
On the other hand, using $\phi' > 0$ and $\frac{\phi}{1 + \phi^2} \leq \frac{1}{2}$ in equation~\eqref{eqn:phi-limit-equation}, we find $\phi'' \geq - \frac{c}{2} (\phi')^2$, so
\[
(\tfrac{1}{\phi'})' \leq \tfrac{c}{2} \implies \tfrac{1}{\phi'(s)} \leq \tfrac{c}{2}(s + B_1) \implies \phi(s) \geq \tfrac{2}{c} \log(s+B_1) + B_0.
\]
In particular, $\phi \to \infty$ is unbounded as $s \to \infty$.

Since $\phi$ is an increasing function which is positive for $s>-1$, the map $\tau(s) := \on{arcsinh} \phi(s)$ is strictly increasing with image $[0, \infty)$.
Then, $\tau'(s) = \frac{\phi'}{\sqrt{1+ \phi^2}} = \frac{\phi'}{\cosh \tau}$.
We define
\begin{equation}\label{eqn:L(tau)-function}
    L(\tau) := \frac{\sqrt{1 + \phi(s)^2}}{h} = \frac{\cosh \tau}{\phi - 2 s \phi'} = \frac{1}{\tanh \tau - 2 s \tau'(s)}, \qquad \text{for } \; s = \phi^{-1}( \sinh \tau).
\end{equation}
Given the properties $h > 0$ and $h(-1) = \infty$, the function $\frac{\sqrt{1+\phi(s)^2}}{h}$ is well-defined on all of $[-1,\infty)$, and $L(0)=0$.
Moreover, $L(\tau)$ satisfies the equation
\begin{equation}\label{eqn:L(tau)-Riccati-equation}
    \cP[L] :=L'(\tau) - L(\tau)^2 + (c-1) \tanh \tau \, L(\tau) - c = 0 , \qquad L(0) = 0.
\end{equation}
To see this, note that $\phi' = \tau'(s) \cosh \tau$ and $\phi'' = \tau'' \cosh \tau + (\tau')^2 \sinh \tau$, so
\[
\tau'' + (\tau')^2 \tanh \tau + c \, (\tau')^2 ( \tanh \tau - 2 s \tau')  = 0.
\]
On the other hand, the definition of $L$ in~\eqref{eqn:L(tau)-function} implies that $2 s \tau' = \tanh \tau - L^{-1}$, and differentiating this relation allows us to eliminate $\tau''$ from the above equation and obtain~\eqref{eqn:L(tau)-Riccati-equation}.

The properties of $\phi$ obtained above, under the assumption $h>0$, make $L(\tau)$ defined and positive for all $\tau \in [0,\infty)$.
Observe that the function $c \tanh \tau$ is a Riccati subsolution for the operator $\cP$, with $\cP [ c \tanh \tau] = - 2 c \tanh^2 \tau < 0$, so $L(\tau) > c \tanh \tau$ for $\tau>0$.
Equation~\eqref{eqn:L(tau)-Riccati-equation} leads to
\begin{align*}
& L'(\tau) = L(\tau) ( L(\tau) - (c-1) \tanh \tau) + c > \tanh \tau \, L(\tau) + c , \\
& \implies L(\tau) > \tfrac{1}{2} c \cosh \tau \bigl( \pi - 2 \on{arccot}( \sinh \tau) \bigr) > c \cosh \tau, \qquad \text{for } \tau \ge 2.
\end{align*}
Using $\sinh^2 \tau + 1 > 2 \sinh \tau$ for $\tau \in \bR \setminus \{ \on{arcsinh} 1 \}$ implies $\cosh \tau > 2 \tanh \tau$, so for any $c, \tau \geq 2$, we have $(c-1) \tanh \tau < \frac{1}{2} c \cosh \tau < \frac{1}{2} L (\tau)$.
Returning to equation~\eqref{eqn:L(tau)-function}, we find $L'(\tau) > \tfrac{1}{2} L(\tau)^2$ for $\tau \geq 2$, and $L(2) \geq c \cosh 2$.
This forces $L(\tau) \to \infty$ as $\tau \uparrow \tau_*$ for a finite time $\tau_* \leq 2(1+\frac{1}{c})$, a contradiction. 
Therefore, $h = \phi - 2 s \phi'$ must cross zero, so it becomes negative at some $s_* > -1$.
\end{proof}
We can now show that there is a gap between $\sqrt{\frac{k-1}{n-2}}$ and the two zeros of solutions to equation~\eqref{eqn:capillary-ODE}.
\begin{proposition}\label{prop:small-breather}
There exists a $\bar{\ve}_n>0$ such that for any $t_1 \geq \sqrt{\frac{k-1}{n-2}} - \bar{\ve}_n$ and $a>0$, the solution of equation~\eqref{eqn:capillary-ODE} with initial data $\{ f(t_1) = 0, f'(t_1) = a \}$ does not reach a second zero.    
\end{proposition}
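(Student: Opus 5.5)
We argue by contradiction with a blowup argument, using the limit problem~\eqref{eqn:phi-limit-equation} and Lemma~\ref{lemma:analysis-of-limit-equation}. Write $t_\alpha := \sqrt{\frac{k-1}{n-2}}$. Suppose there are $\epsilon_i \downarrow 0$, slopes $a_i \in (0,\infty]$ and parameters $\lambda_i \in [0,\infty]$ such that the solution $f_i$ with data $\{f_i(t_\alpha - \epsilon_i) = 0,\ f_i'(t_\alpha-\epsilon_i) = a_i\}$ reaches a second zero $\tau_i$. Solutions starting at or past $t_\alpha$ never return by Proposition~\ref{prop:unifying-proposition}, so we may take $t_1 < t_\alpha$.

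First we reduce to the worst case. By Proposition~\ref{prop:unifying-proposition} we have $h_{f_i} > 0$ on $[t_\alpha - \epsilon_i, \tau_i]$ and a unique maximum $t_c$. A Sturm-type comparison through the Riccati equation~\eqref{eqn:riccati-for-ratio} for $f'/h_f$ should show that increasing the slope or $\lambda$ only helps the shot return. Lemmas~\ref{lemma:slope-infinity-solution} and~\ref{lemma:convergence-of-solutions} then let us pass to the vertical shot $a_i = \infty$. Rescaling by $\sqrt{\lambda}$ normalizes to~\eqref{eqn:ode-star}. If this monotonicity is not clean, we instead carry $a_i$ along and treat finite bounded slopes by the linear barrier of Lemma~\ref{lemma:harmonic-barrier}, since $\hat t_{n,k} < t_\alpha$ gives a definite gap for small slopes/$\lambda$.

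Next we blow up. Set $s := \epsilon_i^{-1}(t - t_\alpha)$ and $\phi_i(s) := f_i(t_\alpha + \epsilon_i s)$; no vertical rescaling is needed because $f$ stays bounded near $t_\alpha$ by Lemma~\ref{lemma:bound-at-sqrt}. Expand $A(t) = 2(t - t_\alpha) + O((t-t_\alpha)^2) = \epsilon_i(2s + O(\epsilon_i))$ and $\phi_i' = \epsilon_i f_i'$. Multiplying~\eqref{eqn:ode-star} by $\epsilon_i^2/(1-t^2)$, the terms $f'' \cdot \epsilon_i^2$ and $(n-2)\frac{(f')^2}{1+f^2} A f' \epsilon_i^2$ are of order one after rescaling. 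The cubic term together with the $f(f')^2$ term yield $c\frac{(\phi')^2}{1+\phi^2}(\phi - 2s\phi')$ with $c = n-2 \geq 2$. All remaining terms are $O(\epsilon_i^2)$ or $O(\epsilon_i)$. The uniform estimates of Lemma~\ref{lemma:slope-infinity-solution}, adapted to the rescaled equation, give $C^\infty_{\mathrm{loc}}((-1,\infty))$ subsequential convergence of $\phi_i$ to the maximal solution $\phi$ of~\eqref{eqn:phi-limit-equation}, with initial data $\phi(-1) = 0$ and $\phi'(-1) = \infty$.

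Then we derive the contradiction. By Lemma~\ref{lemma:analysis-of-limit-equation}, $\phi - 2s\phi'$ is negative at some $s_* > -1$. The $C^1$ convergence transfers this: $h_{f_i}(t_\alpha + \epsilon_i s_*) = \phi_i - A\epsilon_i^{-1}\phi_i' \to \phi(s_*) - 2s_*\phi'(s_*) < 0$. Proposition~\ref{prop:unifying-proposition}(iii) then forces $f_i$ to be strictly increasing, so it never reaches $\tau_i$, a contradiction. Since $n$ determines $c$ and the $k$-dependence enters only through compact choices of $\alpha \in (0,1)$, the gap $\bar\ve_n$ can be taken uniform in $k$.

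The main obstacle is uniformity and compactness at the singular initial point $s = -1$, which enters twice. First, the limit must genuinely have vertical initial slope rather than degenerating, for instance to $\phi$ with $\phi'(-1)$ finite or to $\phi \equiv 0$ when $\lambda_i \to 0$ or $a_i$ stays bounded. Second, $s_*$ must lie in the region of local convergence. I would handle the first by the square-root expansion~\eqref{eqn:f(t)f'(t)-expansion}, whose coefficients are uniformly controlled near $t_1$, and the degenerate small-slope regimes separately by the harmonic barrier.
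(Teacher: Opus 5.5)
Your proposal is correct in outline and is essentially the paper's own proof. The paper normalizes to the single parameter $\Lambda=\lambda a^2$, disposes of $\Lambda\downarrow 0$ with Lemma~\ref{lemma:harmonic-barrier}, and reduces $\Lambda\uparrow\infty$ to vertical shots of \eqref{eqn:ode-star} via Lemmas~\ref{lemma:slope-infinity-solution} and~\ref{lemma:convergence-of-solutions}. It then blows up at scale $\epsilon_i$ about $t_\alpha$ to obtain \eqref{eqn:phi-limit-equation} with $c=n-2$, and contradicts $h_{f_i}>0$ from Proposition~\ref{prop:unifying-proposition}$(iii)$ using Lemma~\ref{lemma:analysis-of-limit-equation}.

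Two small corrections. The speculative Riccati monotonicity in $a$ and $\lambda$ is not needed. The paper covers intermediate $\Lambda$ by compactness of $[0,\infty]$ and continuous dependence from the start point $t_\alpha$, where the solution stays increasing. That same continuity argument, not the harmonic barrier, is what handles your bounded-slope case with $\lambda=1$, because Lemma~\ref{lemma:harmonic-barrier} only controls $\Lambda\downarrow 0$.
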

\begin{proof}
Following the derivation of equation~\eqref{eqn:capillary-ODE}, we note that if $f$ solves this equation with parameter $\lambda$ and $f'(t_1) = a$, then $\frac{1}{a}f$ solves~\eqref{eqn:capillary-ODE} with parameter $\Lambda := \lambda a^2$ and $f'(t_1) = 1$.
From Proposition~\ref{prop:unifying-proposition}, we know that for any $\lambda,a \in [0,\infty]$, the solution of equation~\eqref{eqn:capillary-ODE} with initial data $\{ f(t_1) = 0, f'(t_1) = a \}$ starting from $t_1 = \sqrt{\frac{k-1}{n-2}}$ remains strictly increasing while defined.
Therefore, the continuous dependence theorem for ODE (applicable also for $a = \infty$, in view of Lemmas~\ref{lemma:slope-infinity-solution} and~\ref{lemma:convergence-of-solutions}) implies this property for all $t_1 \in (\sqrt{\frac{k-1}{n-2}} - \ve(n,k,\lambda,a), \sqrt{\frac{k-1}{n-2}})$, where the size of the interval may additionally depend on the parameters $\lambda,a$.
We seek to prove that $\ve(n,k,\lambda,a)$ has a uniform positive lower bound, independent of any $\lambda,a$, i.e., $\ve_{n,k} := \liminf_{\lambda, a  \in [0,\infty]} \ve(n,k,\lambda,a) > 0$.
Applying the above compactness and continuity properties of the solutions, it suffices to prove the existence of such a bound as $\lambda, a \downarrow 0$ or $\lambda,a \uparrow \infty$.

If $\lambda = \Lambda a^2 \downarrow 0$, the discussions of Section 3.2 and Corollary 3.11 in~\cite{FTW-1} show that the solutions $\frac{1}{a} f$ converge uniformly to the solution of $\cL_{n,k} f_0 = 0$ with initial data $\{ f_0(t_1) = 0, f'_0(t_1) = 1 \}$.
Applying Lemma~\ref{lemma:harmonic-barrier} shows that there exists a $\ve_{n,k,0}>0$ such that this function remains positive and does not reach a second zero, if $t_1 > \sqrt{\frac{k-1}{n-2}} - \ve_{n,k,0}$; this addresses the first case.
Next, we apply the rescaling property of~\eqref{eqn:capillary-ODE} again to work with the function $\sqrt{\lambda} f$, which solves~\eqref{eqn:ode-star} (i.e., $\lambda=1$).
By compactness and the above discussion, it remains to prove the uniform positivity of $\ve_{n,k} := \liminf_{a \to \infty} \ve(n,k,a) > 0$ among solutions of $\{ f(t_1) = 0, f'(t_1) = a \}$. 
Applying Lemmas~\ref{lemma:slope-infinity-solution} and~\ref{lemma:convergence-of-solutions} again, we are reduced to proving the result for solutions $f_{t_1}$ of~\eqref{eqn:ode-star} with vertical slope starting from $t_1$, i.e., $\{ f(t_1) = 0, f'(t_1) = \infty\}$.

Suppose, for contradiction, that the result does not hold, so there exists a sequence of $\epsilon_i \downarrow 0$ such that the above solution $f_i := f_{t_{\alpha}- \epsilon_i}$ starting from $t_{\alpha} - \epsilon_i$ reaches a second zero; here and in what follows, we set $t_{\alpha} := \sqrt{\frac{k-1}{n-2}}$, for brevity.
Applying Proposition~\hyperref[prop:unifying-proposition]{\ref{prop:unifying-proposition}$(iii)$}, we have $h_i = f_i - A f'_i > 0$.
We define $t_i(s) := t_{\alpha} + \epsilon_i s$ and let $\phi_i(s) := f_i(t_i(s))$, so $\phi'_i(s) = \epsilon_i f'_i(t_i(s))$ and $\phi''_i(s) = \epsilon_i^2 f''_i(t_i(s))$.
Moreover, $A(t_{\alpha}) = 0, A'(t_{\alpha}) = 2$ expresses $A(t_i(s)) = 2 \epsilon_i s + O (\epsilon_i^2)$, and 
\begin{equation}\label{eqn:hi-term-to-h-infty}
h_i = \phi_i - \epsilon_i^{-1} A \phi'_i = \phi_i - 2 s \phi'_i - O(\epsilon_i) \phi'_i.
\end{equation}
The initial condition for $f_i$ becomes $\phi_i(-1) = 0 , \phi'_i(-1) = \infty$.
The rescaled equation becomes
\[
(1-t^2) \phi''_i + (n-2) (1-t^2) \tfrac{(\phi'_i)^2}{1 + \phi_i^2} h_i = \epsilon_iR_i(s), \qquad R_i(s) := t_i(s) \phi'_i -\epsilon_i\phi_i - (n-2) \epsilon_i \tfrac{h_i}{1 + \phi_i^2},
\]
and we see $R_i$ is uniformly bounded, so $\epsilon_i R_i \downarrow 0$. 
Since $h_i > 0$ for every $i$, the above equation implies 
\[
(1-t^2) \phi''_i \leq - (n-2) (1-t^2) \tfrac{(\phi'_i)^2}{1 + \phi_i^2} h_i + \epsilon_i R_i \leq C \epsilon_i|\phi_i'| + C \epsilon_i^2|\phi_i|
\]
which implies that $\limsup_{\epsilon_i \downarrow 0} \| \phi''_i \|_{C^0(-1+\delta,S)}$ is uniformly bounded by a constant depending only on $\delta,S>0$.
Note that the uniform boundedness on intervals with $s \geq -1+\delta$ follows as in the convergence and compactness arguments of Lemmas~\ref{lemma:slope-infinity-solution} and~\ref{lemma:convergence-of-solutions}.
Therefore, $\phi_i, \phi'_i$ are also uniformly bounded, and we can apply the Arzel\`a-Ascoli theorem and a diagonal argument over intervals $[ -1 + \frac{1}{k}, S_k]$, as in Lemma~\ref{lemma:slope-infinity-solution}, to extract a subsequential limit $\phi_{i_k} \to \phi_{\infty}$ in $C^2$, hence also in $C^{\infty}$, as solutions of elliptic equations with right-hand sides converging uniformly.
The uniform bounds on $\| \phi_i \|_{C^2}$ produce $R_i(s) \to 0$ uniformly as $\epsilon_i \to 0$, hence $\phi$ satisfies equation~\eqref{eqn:phi-limit-equation} with $c=n-2$.

To obtain the limit equation~\eqref{eqn:phi-limit-equation}, we first observe that the factor $1-t^2$ can be cancelled because it is uniformly positive due to $t_i(s) \to t_{\alpha}$ as $\epsilon_i \downarrow 0$, for any $s \in [-1,S]$.
The initial condition $\phi_{\infty}(-1) = 0$ comes from $\phi_i(-1) = 0$; to see that $\phi'_{\infty}(-1) = \infty$, observe that each $f_i(t)$ satisfies the expansion $f_i(t) \sim \sqrt{\frac{t - t_i(-1)}{(n-2) \epsilon_i}}$ and $f'_i(t) \sim \frac{1}{2} \sqrt{\frac{1}{(n-2) \epsilon_i}} \frac{1}{\sqrt{t - t_i(-1)}}$ for $0 < t- (t_{\alpha} - \epsilon_i) \ll \epsilon_i$.
This property comes from the expansions~\eqref{eqn:f(t)f'(t)-expansion} together with $A(t_{\alpha} - \epsilon_i) = - 2 \epsilon_i + O(\epsilon_i^2)$ as above.
For $t = t_i(s)$ with $s \in (-1,-1+\delta]$, this transformation produces $\phi_i(s) \sim \sqrt{\frac{s+1}{n-2}}$ and $\phi'_i(s) \sim \frac{1}{2} \sqrt{\frac{1}{(n-2)(s+1)}}$.
Therefore, the limit solution $\phi_i \xrightarrow{C^{\infty}_{\text{loc}}} \phi_{\infty}$ has vertical initial slope, and can be obtained via the local series expansion $\phi_{\infty}(s) = \sqrt{\frac{s+1}{n-2}} + O(s+1)$ near $s=-1$, as in Lemma~\ref{lemma:convergence-of-solutions}.

By the considerations of Lemmas~\ref{lemma:slope-infinity-solution} and~\ref{lemma:convergence-of-solutions}, the function $\phi_{\infty}$ agrees with the $C^{\infty}_{\text{loc}}$ subsequential limit of solutions $\phi^{(M)}$ to~\eqref{eqn:phi-limit-equation} with initial data $\{ \phi^{(M)}(-1) = 0, \phi^{(M)}{}'(-1) = M \}$ as $M \to \infty$, which is precisely the solution $\phi$ considered in Lemma~\ref{lemma:analysis-of-limit-equation}.
Moreover, the functions $h_i(s)$ from~\eqref{eqn:hi-term-to-h-infty} satisfy $h_i \to h_{\infty} := \phi_{\infty} - 2 s \phi'_{\infty}$, hence $h_{\infty} \geq 0$ is everywhere non-negative due to $h_i >0$.
However, this contradicts Lemma~\ref{lemma:analysis-of-limit-equation}, which shows that $h_{\infty}$ must become negative.
We conclude that such a sequence of $\epsilon_i \downarrow 0$ as above cannot exist, hence $f_{t_1}$ cannot reach a second zero if $t_1 > t_{\alpha} - \bar{\ve}_{n,k}$.
Letting $\bar{\ve}_n := \min_{2 \leq k \leq n-2} \bar{\ve}_{n,k}$ and applying the above arguments completes the proof.
\end{proof}

The other key ingredient of our proof is the following result, which shows that for any given initial slope $a \in (0,\infty]$, solutions starting sufficiently close to $t_1 = 0$ must reach a second root.

\begin{proposition}\label{prop:shoot-infinite-slope}
There exists a $\ve_n > 0$ such that for any $t_1 \in (0,\ve_n)$ and $a \in (0,\infty]$, the solution of equation~\eqref{eqn:capillary-ODE} with initial data $\{ f(t_1) = 0, f'(t_1) = a \}$ reaches a second zero in its domain.
\end{proposition}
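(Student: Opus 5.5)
Fix $\lambda\in[0,\infty]$ and use the scaling from the proof of Proposition~\ref{prop:small-breather}: $\frac1a f$ solves~\eqref{eqn:capillary-ODE} with parameter $\Lambda=\lambda a^2$ and unit initial slope. The statement therefore reduces to the family $\{f(t_1)=0,\ f'(t_1)=1\}$ with $\Lambda\in[0,\infty]$. The endpoint $\Lambda=\infty$ is the vertical shot $\{f(t_1)=0, f'(t_1)=\infty\}$ for~\eqref{eqn:ode-star}, given by Lemma~\ref{lemma:slope-infinity-solution}. I will first handle the two endpoints $\Lambda=0$ and $\Lambda=\infty$. I will then argue that "reaching a second zero" is stable and that the intermediate range can be controlled uniformly.

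\emph{Endpoint $\Lambda=0$.} Lemma~\ref{lemma:harmonic-barrier} already gives a second zero whenever $t_1\le \hat t_{n,k}$. More generally, as $t_1\downarrow 0$ the Dirichlet eigenvalue satisfies $\lambda_1(t_1,\tau)<n-1$ with room to spare. I will use this margin, together with the convergence $\frac1a f\to f_0$ as $\Lambda\downarrow 0$ (FTW-1, Corollary 3.11), to cover small $\Lambda$. The second zero is transversal with finite slope, so the property persists under small perturbations.

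\emph{Main obstacle: large $\Lambda$, equivalently vertical shots from $t_1\to 0$.} Here I plan a blow-up at $t=0$, mirroring Proposition~\ref{prop:small-breather}. Near $0$ we have $A(t)\approx -\alpha/t$, so the natural rescaling is $s=t/t_1$, $\phi(s)=f(t_1 s)$. The limit equation is
\[
\phi''-\tfrac{\phi'}{s}\cdot(\text{lower order})+(n-2)\tfrac{(\phi')^2}{1+\phi^2}\bigl(\phi+\tfrac{\alpha}{s}\phi'\bigr)\approx 0 ,
\]
with a vertical start at $s=1$. I am not confident this alone produces the zero.

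The alternative I would try first is a direct barrier. Suppose the vertical shot from $t_1$ never returns to zero. By Proposition~\ref{prop:unifying-proposition}$(iii)$, $h_f$ then vanishes at some $t_h>\sqrt{\alpha}$ and $h_f$ becomes negative. On the other hand, Lemma~\ref{lemma:bound-at-sqrt} says that any returning shot has $f(\sqrt\alpha)\le 2/\sqrt n$. The $\Psi$-argument in its proof, run in reverse, should show the following: if $\Psi(\sqrt\alpha)<0$, i.e.\ $f(\sqrt\alpha)$ is small, and $f'(\sqrt\alpha)\le 0$, then $f$ must descend to zero after $\sqrt\alpha$.

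So the key estimate is that the vertical shot from a very small $t_1$ reaches its maximum before $\sqrt\alpha$, at a height below $1/\sqrt{n-2}$. Two ingredients should give this:
\begin{enumerate}[(a)]
\item The bound~\eqref{eqn:ODE-comparison} caps $f'$ in terms of $|A(T_1)|$. Since $|A|\sim\alpha/t$ is huge near $0$, the cubic damping $(n-2)|A|(f')^3$ forces the slope down to $O(1)$ within a short distance of $t_1$. This keeps $f$ of order $\sqrt{t_1}$ there.
\item Afterwards, on $[t_1,\sqrt\alpha]$ the term $(n-2)f\bigl(1+\dots\bigr)$ together with $-A f'>0$ makes $f''\lesssim -c f$. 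Sturm comparison against the harmonic solution, which turns over on the long interval $(0,\sqrt\alpha)$, then forces a critical point with small height.
\end{enumerate}
Once this holds for $\Lambda=\infty$ and every $t_1<\ve_n$, compactness in $(t_1,\Lambda)$ closes the argument. The compactness comes from Lemma~\ref{lemma:convergence-of-solutions} and continuity in $\Lambda$, provided we use that the second zero is reached with finite slope. That finiteness holds because $h_f>0$ forces $t_2>\sqrt\alpha$, where $A>0$ and no vertical blow-up of the type~\eqref{eqn:f(t)f'(t)-expansion} can occur from the left. Taking the minimum over $2\le k\le n-2$ then gives $\ve_n$.
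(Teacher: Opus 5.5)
Your barrier argument has a genuine gap at its decisive step. You claim that a shot with $f(t_\alpha)<1/\sqrt{n-2}$ and $f'(t_\alpha)\le 0$, where $t_\alpha:=\sqrt{(k-1)/(n-2)}$, must descend to zero, and you propose running the $\Psi$-argument of Lemma~\ref{lemma:bound-at-sqrt} in reverse. That argument does not give this.

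\begin{itemize}
\item \emph{What the sign rule gives.} It only says $\on{sgn}\Psi''(t_*)=\on{sgn}\Psi(t_*)$ at critical points $t_*>t_\alpha$. So while $\Psi<0$ it has no local minima, but nothing prevents a negative $\Psi$ from increasing through zero.
\item \emph{$\Psi$ does increase in your regime.} For~\eqref{eqn:ode-star} with $f'(t_\alpha)=0$ one gets $f''(t_\alpha)=-\frac{(n-1)(n-2)}{n-k-1}f(t_\alpha)$. Hence $\Psi''(t_\alpha)=\frac{2(n-1)(n-2)}{n-k-1}f(t_\alpha)^2>0$, and the same holds for $f'(t_\alpha)$ slightly negative.
\item \emph{Why this matters.} Crossing zero is exactly what precedes a vertical blow-up at positive height. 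If $f'\to-\infty$ as $t\uparrow t_b>t_\alpha$ with $f(t_b)>0$, then $h_f=f-Af'\to+\infty$ and $\Psi\to+\infty$, whereas $\Psi=-\frac{1}{n-2}$ at any zero of $f$. So the alternative you must exclude (turning over, then blowing up before reaching zero) is precisely the one the sign rule cannot see.
\item \emph{Monotonicity.} A non-returning shot need not be monotone; by continuity, vertical shots from just beyond the starting point $t_*$ of Corollary~\ref{cor:shooting-matching} turn over and fail to return. So your appeal to Proposition~\ref{prop:unifying-proposition}$(iii)$ is incomplete.
\item \emph{Finite slope.} Your argument for finite slope at the second zero is backwards. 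By~\eqref{eqn:f(t)f'(t)-expansion}, blow-up with $f'\to-\infty$ approached from the left is exactly what can occur beyond $t_\alpha$, where $A>0$. It is how the free-boundary profile ends.
\end{itemize}

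The missing idea is the paper's Step~1, linearization at small data. Suppose at some small time $\hat t_1$ one has $f(\hat t_1),f'(\hat t_1)\in[0,\delta)$, not both zero. Then $f/\delta$ solves~\eqref{eqn:capillary-ODE} with $\lambda=\delta^2$ and data in a fixed bounded set. The limiting linear solutions $c_0\tilde f_{\hat t_1,0}+c_1\tilde f_{\hat t_1,1}$ with $c_0,c_1\ge 0$ all cross zero transversally beyond $t_\alpha$, by the hypergeometric solution and Lemma~\ref{lemma:harmonic-barrier}. Continuous dependence as $\lambda\downarrow 0$ then transfers a second zero \emph{with finite slope} to $f$.

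Small data is reached by the Bernoulli comparison $f''<-\frac{k-1}{2}t^{-1}f'(1+(f')^2)$ for small $t$ and $f$. This gives $f'\le((t/t_1)^{k-1}-1)^{-1/2}$ and $f\le\frac{2}{\sqrt{k-1}}\sqrt{t_1(t-t_1)}$. Your ingredient (a) is in this spirit, but the target should be small height \emph{and} small slope at a time $\hat t_1=O(t_1)$, not a maximum below the $\Psi$-threshold.

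This comparison works for every initial slope $a\in(0,\infty]$ simultaneously once $\lambda=1$, and you need that. Your closing appeal to compactness in $(t_1,\Lambda)$ cannot recover intermediate $\Lambda$ from the endpoints $\Lambda\in\{0,\infty\}$. The $t_1$-range accumulates at the singular point $t=0$, so an open good set containing the two edges need not contain any uniform strip $(0,\ve)\times[0,\infty]$.
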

\begin{proof}
Applying the reductions in the first half of Proposition~\ref{prop:small-breather}, it suffices to prove this result in the cases $\lambda=0$ and $\lambda=1$, from which a uniform $\ve_{n,k}>0$ may be extracted.
We then let $\ve_n := \min_{2 \leq k \leq n-2} \ve_{n,k} > 0$.
The first case follows from Lemma~\ref{lemma:harmonic-barrier}, so we focus on solutions of equation~\eqref{eqn:ode-star}.
The proof proceeds in two steps.

\smallskip \noindent \textbf{Step 1:}
First, we prove that for a small $\delta_{n,k}>0$, any solution of equation~\eqref{eqn:ode-star} with initial data
\begin{equation}\label{eqn:original-assumption}
t_1 \in (0,\delta), \qquad (f(t_1) , f'(t_1) ) \in (0,\delta) \times (0,\delta)
\end{equation}
reaches a zero $t_2> \sqrt{\frac{k-1}{n-2}}$ with finite derivative.
We start by considering the linear equation $\cL_{n,k} \tilde{f} = 0$.
From Proposition~\ref{prop:unifying-proposition} (see also~\cite{FTW-1}*{Lemma 3.6}), any solution $\tilde{f}$ of $\cL_{n,k} \tilde{f} = 0$ that is not everywhere positive has a unique critical point, and a zero $t_2 < 1$ where $\tilde{f}'(t_2) < 0$, while $\tilde{f}'<0$ on $(t_2,1)$.
Since the hypergeometric function $\tilde{f}(t) = {}_2F_1(\frac{n-1}{2}, - \frac{1}{2} ; \frac{k}{2} ; t^2)$ is the solution corresponding to the initial data $\{ \tilde{f}(0) = 1, \tilde{f}'(0) = 0 \}$, and reaches a zero strictly before $t=1$ with finite derivative, the smooth dependence of solutions on the initial data shows that the same property holds for solutions $\tilde{f}_{t_1,0}$ with data $\{ \tilde{f}(t_1) = 1, \; \tilde{f}'(t_1)= 0 \}$ starting from any $t_1 \in (0, 2 \hat{\delta})$, provided that $\delta = \hat{\delta}_{n,k}$ is sufficiently small.
On the other hand, Lemma~\ref{lemma:harmonic-barrier} implies that solutions $\tilde{f}_{t_1,1}$ with data $\{ \tilde{f}(t_1) = 0, \tilde{f}'(t_1) = 1 \}$ have the same property, for $t_1 \in (0,2 \delta)$.
Finally, we observe that the solution $\tilde{f}_{t_1,c_0,c_1}$ to equation $\cL_{n,k} \tilde{f} = 0$ with initial data $\{ \tilde{f}(t_1) = c_0, \tilde{f}'(t_1) = c_1 \}$ is given by
\[
\tilde{f}_{t_1,c_0,c_1}(t) = c_0 \tilde{f}_{t_1,0} (t) + c_1 \tilde{f}_{t_1,1}(t)
\]
where each $\tilde{f}_{t_1,i}(t)$ reaches a zero at $t_{2,i} > \sqrt{\frac{k-1}{n-2}}$, has a unique critical point, and is strictly decreasing for $t > t_{2,i}$, as observed above.
Consequently, $\tilde{f}_{t_1,c_0,c_1}$ must also reach zero at a point $t_{2,c_0,c_1} \leq \max \{ t_{2,0}, t_{2,1}\}$.
Shrinking the $t_1$-interval, if needed, and shrinking the initial data space to $c_0,c_1 \in (0,2)$ (since the equation $\cL_{n,k} \tilde{f} = 0$ is scale-invariant), we may again apply the smooth dependence theorem to obtain a small $\lambda_{n,k}>0$ such that every solution of equation~\eqref{eqn:capillary-ODE} with parameter $\lambda \in [0,\lambda_{n,k})$ and initial data
\begin{equation}\label{eqn:initial-data-existence}
t_1 \in (0,\delta), \qquad (f(t_1), f'(t_1)) \in (0,2) \times (0,2)
\end{equation}
reaches a zero $t_2 > \sqrt{\frac{k-1}{n-2}}$ with finite derivative.
Finally, we observe that property~\eqref{eqn:original-assumption} implies that the function $f_{\delta} (t):= \frac{1}{\delta} f(t)$ is a solution of equation~\eqref{eqn:capillary-ODE} with constant $\lambda = \delta^2$ and initial data that satisfy~\eqref{eqn:initial-data-existence}.
Consequently, we may redefine $\delta_{n,k} := \min \{ \sqrt{\lambda_{n,k}} , \frac{1}{2} \hat{\delta}_{n,k} \}$ so that all the above arguments are applicable, and the property~\eqref{eqn:original-assumption} holds, as desired.

\smallskip \noindent \textbf{Step 2:}
We claim that for $\ve>0$ sufficiently small and $t_1 \in (0,\ve)$, the solution $f$ of equation~\eqref{eqn:ode-star} with initial data $\{ f(t_1) = 0 , f'(t_1) = a \}$ with \textit{arbitrary} $a \in (0,\infty]$ satisfies the conditions~\eqref{eqn:original-assumption} at a slightly later time $\hat{t}_1$.
Step 1 will then imply that this function reaches a zero $t_2 > \sqrt{\frac{k-1}{n-2}}$ with finite derivative.
This result is true for solutions of equation~\eqref{eqn:capillary-ODE} by the previous rescaling considerations.
To prove the claim, we bound the quantity $f''$ by differential inequalities for~\eqref{eqn:ode-star}, as in Lemma~\ref{lemma:slope-infinity-solution}.
Let $C(\delta) := 2 (1 + 10^2 \delta^{-2})$ for $\delta$ as in~\eqref{eqn:original-assumption} and  take $0 < \ve \ll \frac{\delta^2}{200 \, C(\delta)^2}$ sufficiently small.
For $f \leq \frac{\delta}{10}$ and $t \leq 2C(\delta)\ve$, we bound $A_{n,k}(t) \leq - \tfrac{3}{4} \frac{k-1}{n-2} t^{-1}$ and
\begin{equation}\label{eqn:f''-Bernoulli-bound-v2}
f'' <- \tfrac{k-1}{2} t^{-1} f'(1+ (f')^2) \quad \implies \quad f' \leq \frac{1}{\sqrt{\left(1 + \tfrac{1}{f'(t_1)^2}\right)\left(\frac{t}{t_1}\right)^{k-1}-1}} \leq \frac{1}{\sqrt{(\frac{t}{t_1})^{k-1}-1}} 
\end{equation}
for any solution of~\eqref{eqn:ode-star} starting from $t_1 \in (0,\ve)$ with $f(t_1) = 0$ and $f'(t_1) \in (0,\infty]$ arbitrary.
The second bound comes from comparing this ODE with the solution of the associated Bernoulli equation $y' =- \frac{k-1}{2} t^{-1} y(1+y^2)$, where $y(t) := f'(t)$.
Letting $U(t) := 1 + \frac{1}{y^2}$, we find
\[
U'(t)=- 2 y^{-3} y' \geq (k-1) t^{-1} ( 1 + y^{-2})  = (k-1) t^{-1} U(t).
\]
Integrating this inequality yields $U(t)\geq U(t_1)\bigl(\frac{t}{t_1}\bigr)^{k-1}$, and inverting gives~\eqref{eqn:f''-Bernoulli-bound-v2}. 
Consequently,
\begin{equation}\label{eqn:fBoundFromPolySqrt}
f' \leq \frac{1}{\sqrt{(t/t_1)^{k-1}-1}} \qquad \implies \qquad f(t) \leq \frac{2}{\sqrt{k-1}}\sqrt{t_1(t -t_1)}
\end{equation}
by direct integration.
Using $f \leq \frac{\delta}{100}$ in $\{ f'>0 \} \cap [t_1, 2C(\delta)t_1]$ and $f' \leq \frac{\delta}{10}$ in $\{ f'>0 \} \cap [ C(\delta) t_1, 2 C(\delta) t_1]$, we conclude that at least one of 
\[
\{ f \leq \tfrac{\delta}{2},\ f' = 0 \}
\qquad\text{or}\qquad
\{ f \leq \tfrac{\delta}{2},\ f' \leq \tfrac{\delta}{2} \}
\]
must occur at some point $\hat{t}_1 < 2 C(\delta)t_1 < \delta$, where we used $\ve \ll \frac{\delta^2}{200 C(\delta)^2}$ and $\frac{1}{\sqrt{C(\delta)^{k-1}-1}} \leq \frac{\delta}{10}$.
Therefore, either $f'$ vanishes somewhere in $[t_1,C(\delta)t_1]$, in which case at the first such point $\hat{t}_1$ we have $f(\hat{t}_1)\le \delta/100$ and $f'(\hat{t}_1)=0$, or $f'>0$ on $[t_1,C(\delta)t_1]$ and taking $\hat{t}_1=C(\delta)t_1$ yields $f(\hat{t}_1)\le \delta/100$ and $f'(\hat{t}_1)\le \delta/10$; in either case $\hat{t}_1<2C(\delta)t_1<\delta$ follows from $\ve \ll \frac{\delta^2}{200 C(\delta)^2}$.

In both cases, this shows that $f$ satisfies the assumption~\eqref{eqn:original-assumption} of Step 1, therefore it reaches a zero $t_2 > \sqrt{\frac{k-1}{n-2}}$ with finite derivative as desired.
\end{proof}

\begin{corollary}\label{cor:shooting-matching}
    For any $a \in (0,\infty]$, there exists a $t_1$ such that the solution of equation~\eqref{eqn:ode-star} with initial data $\{ f(t_1) = 0, f'(t_1) = a \}$ reaches a second zero in its domain, at a point $t_2$, where
    \[
    (1 - t_1^2) f'(t_1)^2 = (1 - t_2^2) f'(t_2)^2.
    \]
    If $a = \infty$, this means that $|f'(t)| \to \infty$ as $t \uparrow t_2$.
    The graph of the function $U = \rho f(t)$ defines a capillary minimal cone in $\bR^{n+1}$ with contact angle $\tan \theta = a \sqrt{1 - t_1^2}$.
\end{corollary}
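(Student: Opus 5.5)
Fix $a\in(0,\infty]$ and let $f_{t_1}$ denote the solution of \eqref{eqn:ode-star} with data $\{f(t_1)=0,\ f'(t_1)=a\}$, understood via Lemma~\ref{lemma:slope-infinity-solution} when $a=\infty$. The plan is a continuity (intermediate value) argument in the shooting parameter $t_1\in(0,t_\alpha)$, where $t_\alpha:=\sqrt{\frac{k-1}{n-2}}$. The quantity to match is
\[
G(t_1):=(1-\tau(t_1)^2)f_{t_1}'(\tau(t_1))^2-(1-t_1^2)a^2 ,
\]
with $\tau(t_1)$ the second zero. For $a=\infty$ it is cleaner to use the angle $\theta_2(t_1):=\arctan\bigl(\sqrt{1-\tau^2}\,|f'_{t_1}(\tau)|\bigr)\in(0,\tfrac{\pi}{2}]$ and ask that it equal $\theta_1:=\arctan(a\sqrt{1-t_1^2})$. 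For finite $a$ this $\theta_1$ depends continuously on $t_1$, and for $a=\infty$ it is identically $\tfrac{\pi}{2}$.

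\textbf{Step 1 (an interval of admissible shots).} By Proposition~\ref{prop:shoot-infinite-slope}, for $t_1\in(0,\ve_n)$ the shot reaches a second zero $\tau(t_1)>t_\alpha$, and Step~1 of that proof shows the derivative there is finite. By Proposition~\ref{prop:small-breather}, for $t_1\ge t_\alpha-\bar\ve_n$ there is no second zero. Set
\[
t_*:=\sup\{t_1: f_{s}\text{ reaches a second zero with finite slope for all } s\in[\ve_n/2,t_1]\}.
\]
Lemma~\ref{lemma:convergence-of-solutions}, applied on $[d_1,d_2]=[\ve_n/2,\,t_\alpha-\bar\ve_n]$, shows that $t_*<d_2$ and that $f_{t_*}$ reaches zero at $\tau(t_*)$ with $f'_{t_*}\to-\infty$. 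In other words, $\theta_2(t_*)=\tfrac{\pi}{2}$.

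\textbf{Step 2 (sign change).} On $[\ve_n/2,t_*)$ the map $t_1\mapsto\theta_2(t_1)$ is continuous, by smooth dependence on data together with Lemma~\ref{lemma:convergence-of-solutions} when $a=\infty$, and it tends to $\tfrac{\pi}{2}$ as $t_1\uparrow t_*$. The aim is to show that $\theta_2-\theta_1$ changes sign on this interval.
\begin{itemize}
\item \emph{Case $a<\infty$.} At the right end, $\theta_2\to\tfrac{\pi}{2}>\theta_1(t_*)$. At the left end, one shows $\theta_2<\theta_1$ by pushing $t_1\downarrow0$ further, which is allowed since Proposition~\ref{prop:shoot-infinite-slope} holds on all of $(0,\ve_n)$. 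As $t_1\to0$, Step~2 of that proof shows that $f$ and $f'$ become small, at most of order $\delta$, after a short time. Step~1 of the same proof then shows the solution behaves like a small multiple of a linear solution, so $|f'(\tau)|\lesssim\delta$ stays bounded by a small quantity. Meanwhile $\theta_1\to\arctan a>0$. Hence $\theta_2<\theta_1$ for small $t_1$, and the intermediate value theorem yields a matching $t_1$.
\item \emph{Case $a=\infty$.} Here $t_1=t_*$ itself works directly, since $\theta_2(t_*)=\tfrac{\pi}{2}=\theta_1$.
\end{itemize}

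\textbf{Step 3 (the cone).} By \eqref{eqn:boundaryAngleTerm} and Proposition~\ref{prop:unifying-proposition}, $f>0$ on $(t_1,t_2)$. The graph of $U=\rho f(t)$ is then a type $(ii)$ cone whose contact angle at both boundary components is $\theta$ with $\tan\theta=a\sqrt{1-t_1^2}$.

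The main obstacle is Step~2, specifically the uniform smallness of $|f'(\tau)|$ as $t_1\to0$ for a fixed finite $a$. To settle it, I would track the bounds \eqref{eqn:f''-Bernoulli-bound-v2}–\eqref{eqn:fBoundFromPolySqrt} to obtain $f,f'=O(\delta)$ at $\hat t_1$, and then use the rescaling $\frac1\delta f$ with $\lambda=\delta^2\to0$. As $\delta\to0$ this converges to a solution of $\cL_{n,k}$ with bounded data, so its slope at the second zero is $O(1)$. This gives $|f'(\tau)|=O(\delta)$, which is what Step~2 needs.
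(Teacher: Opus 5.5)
Your proposal is correct and follows essentially the same route as the paper. Both run an intermediate-value argument in $t_1$ for the mismatch $R[t_1]=(1-t_1^2)a^2-(1-\tau(t_1)^2)f'_{t_1}(\tau(t_1))^2$, which is equivalent to your $\theta_1-\theta_2$. The endpoint $t_*$ from Lemma~\ref{lemma:convergence-of-solutions} forces $R\to-\infty$ at the right end and gives the free-boundary profile directly when $a=\infty$, while Proposition~\ref{prop:shoot-infinite-slope} makes the exit slope $O(\delta)$ for small $t_1$ at the left end. Your statement of that left-end step as an $O(\delta)$ bound via the rescaling $\frac{1}{\delta}f$ with $\lambda=\delta^2$ is a cleaner version of what the paper phrases as $C^2$-closeness to the even solution with $f(0)=\delta$.
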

\begin{proof}
For each $t_1 \in ( 0, \sqrt{\frac{k-1}{n-2}} - \bar{\ve}_n)$, let $f_{t_1}$ denote the solution of equation~\eqref{eqn:ode-star} with initial data $\{ f_{t_1}(t_1) = 0 , f'_{t_1}(t_1) = a\}$, starting from the point $t_1$.
    Proposition~\ref{prop:shoot-infinite-slope} shows that if $t_1 \in (0,\ve_n)$, the solution $f_{t_1}$ reaches zero again, at a point $\tau(t_1) > \sqrt{\frac{k-1}{n-2}}$, by Proposition~\hyperref[prop:unifying-proposition]{\ref{prop:unifying-proposition}$(iii)$}.
    On the other hand, taking $t_1 \uparrow \sqrt{\frac{k-1}{n-2}} - \bar{\ve}_n$ makes the solution $f_{t_1}$ remain strictly positive until blowup, by Lemma~\ref{lemma:slope-infinity-solution}.
    Applying Proposition~\ref{prop:shoot-infinite-slope}, we can extract some $t_* \in ( \ve_n , \sqrt{\frac{k-1}{n-2}} - \bar{\ve}_n)$ such that the solution $f_{t_*}$ of $\{ f_{t_*}(t_*) = 0 , f'_{t_*}(t_*) = a \}$ reaches another zero, at a point $\tau(t_*) > \sqrt{\frac{k-1}{n-2}}$, with $f'_{t_*}(\tau(t_*)) = -\infty$.

    If $a = \infty$, then this discussion shows that $|f'(t)| \to \infty$ as $t \to t_*$ or $t \to \tau(t_*)$, so the solution $f_{t_*}$ is the desired profile curve of a free-boundary minimal cone in $\bR^{n+1}_+$. 
    
    If $a < \infty$, we consider any $t_1 \in ( \ve_n, t_*)$ and the corresponding solution $f_{t_1}$ as above, reaching a second zero at $\tau(t_1) > \sqrt{\frac{k-1}{n-2}}$, and form the quantity
    \[
    R[t_1] := (1 - t_1^2) f'_{t_1}(t_1)^2 - (1 - \tau(t_1)^2) f'_{t_1} (\tau (t_1))^2 \,.
    \]
    The smooth dependence on the initial conditions for solutions to ODE with smooth coefficients shows that the map $s \mapsto R[s]$ is smooth whenever finite.
    The definition of $t_*$ in~\eqref{eqn:t-star-definition} shows that $R[t_1] \to - \infty$ as $t_1 \uparrow t_*$, since $f'_{t_*}(\tau(t_*)) = - \infty$.
    On the other hand, Proposition~\ref{prop:shoot-infinite-slope} ensures for any small $\delta>0$, there exists an $\ve(\delta)>0$ such that for any $t_1 \in (0,\ve)$, the function $f_{t_1}$ remains close in $C^2$ to the solution of equation~\eqref{eqn:ode-star} (equivalently, also~\eqref{eqn:legendre-form}, due to $|f|_{C^2} = O (\delta)$) with initial data $\{ f(0) = \delta, f'(0) = 0 \}$.
    The analysis of~\cite{FTW-1}*{Corollary 3.11} shows that for $\delta > 0$ sufficiently small, the solution of equation~\eqref{eqn:ode-star}  reaches zero at a point $t_2(\delta)$, where 
    \[
    (1- t_2(\delta)^2 ) f'(t_2(\delta))^2 = c_{n,k}^{-2} \delta^2 + O(\delta^4) 
    \]
    for a dimensional constant $c_{n,k}$ coming from the hypergeometric function ${}_2F_1(\frac{n-1}{2}, - \frac{1}{2} ; \frac{k}{2} ; t^2)$.
    Therefore, taking $\delta$ sufficiently small and using the smooth dependence of the solutions, we see that at the zero $\tau(t_1) > \sqrt{\frac{k-1}{n-2}}$ of $f_{t_1}$ with $t_1 \in (0 , \ve(\delta))$, it holds that
    \[
    (1 - \tau(t_1)^2 ) f'_{t_1} ( \tau(t_1))^2 < 2 \, c_{n,k}^{-2} \delta^2.
    \]
    We conclude that, for any such $t_1$,
    \[
    R[t_1] = (1 - t_1^2) f'_{t_1}(t_1)^2 - (1 - \tau(t_1)^2) f'_{t_1}(\tau(t_1))^2 \geq (1 - \delta^2) a^2 - 2 c_{n,k}^{-2} \delta^2
    \]
    which implies that $R[t_1] > 0$ for $\delta$ sufficiently small (in particular, $\delta < \frac{c_{n,k}}{2}a$).
    By the continuity of $R[t_1]$ in $t_1 \in (\ve(\delta), t_*)$, we conclude that there exists a $t_1(a) \in (\ve(\delta), t_*)$ such that $R[t_1(a)] = 0$.
    The discussion of Section~\ref{section:topology} (cf.~\cite{FTW-1}*{Proposition 3.2}) shows that the graph of $U = \rho f_{t_1}(t)$ is a capillary minimal cone in $\bR^{n+1}_+$ with contact angle $\tan\theta = a \sqrt{1 - t_1^2}$.
\end{proof}
For fixed $\lambda \in (0,\infty)$, a solution of equation~\eqref{eqn:capillary-ODE} whose graph $U = \rho f(t)$ is a capillary minimal cone can be identified by the data $\{ t_1, a = f'(t_1), t_2 = \tau(t_1) \}$ via $\theta = \arctan \sqrt{(1 - t_i^2) f'(t_i)^2}$.
In view of Lemma~\ref{lemma:change-of-variables}, we obtain the symmetries
\begin{equation}\label{eqn:interchange-n-k-symmetries}
\begin{aligned}
    t_2(n,n-k,a) &= \sqrt{1 - t_1(n,k,a)^2}, \qquad & \theta(n,k,a, t_1) &= \theta(n,n-k,a, \sqrt{1-t_1^2}).
\end{aligned}
\end{equation}

\section{Proof of the main theorems}

Using the above results, notably Corollary~\ref{cor:shooting-matching}, we can now prove Theorems~\ref{thm:MinimalSurfaceExistence}, ~\ref{thm:capillaryConeExistence}, and~\ref{thm:carlotto-schulz}.

\begin{proof}[Proof of Theorem~\ref{thm:MinimalSurfaceExistence}]
Using Corollary~\ref{cor:shooting-matching} for $a = \infty$, we obtain a solution $f_{t_*}$ of~\eqref{eqn:capillary-ODE} such that $\tilde{\mathbf{C}}_{n,k,\frac{\pi}{2}} := \text{graph}( \rho f_{t_*})$ is a free-boundary minimal cone in $\bR^{n+1}_+$.
Doubling $\tilde{\mathbf{C}}_{n,k,\frac{\pi}{2}}$ over the plane $\{z=0\}$ produces a complete minimal hypercone $\hat{\mathbf{C}}_{n,k} \subset \bR^{n+1}$.
    The discussion of Section~\ref{section:topology} shows that the link $\mathbf{S}_{n,k} := \hat{\mathbf{C}}_{n,k} \cap \bS^n$ is a minimal hypersurface diffeomorphic to $\bS^{n-k-1} \times \bS^{k-1} \times \bS^1$, for given $n \geq 4$ and $2 \leq k \leq n-2$.
Setting $p=n-k-1 \geq 1$ and $q = k-1 \geq 1$ proves Theorem~\ref{thm:MinimalSurfaceExistence}.
\end{proof}

\begin{proof}[Proof of Theorem~\ref{thm:capillaryConeExistence}]
    The proof of this result is analogous to Corollary~\ref{cor:shooting-matching}.
    Since the case $\theta = \frac{\pi}{2}$ is already settled, we consider $\theta \in (0, \frac{\pi}{2})$.
    For each $t_1 \in (0, \sqrt{\frac{k-1}{n-2}} - \bar{\ve}_n)$, let $f_{t_1}$ denote the solution of equation~\eqref{eqn:ode-star} with initial data $\bigl\{ f_{t_1}(t_1) = 0, f'_{t_1}(t_1) = \frac{\tan \theta}{\sqrt{1 - t_1^2}} \bigr\}$.
    As in the previous proof, for $t_1 \in (0, \ve(\theta))$ sufficiently small, the solution $f_{t_1}$ reaches a second zero at $\tau(t_1) > \sqrt{\frac{k-1}{n-2}}$, where
    \[
    R[t_1]:= (1-t_1^2)f'_{t_1}(t_1)^2 - (1 - \tau(t_1)^2) f'_{t_1}(\tau(t_1))^2 > \tan^2 \theta - 2 c_{n,k}^{-2} \delta^2 > 0.
    \]
    On the other hand, $t_1 \uparrow t_*$ makes $R[t_1] \to - \infty$, so there exists a $t_1(\theta) \in (\ve(\theta), \sqrt{\frac{k-1}{n-2}} - \bar{\ve}_n)$ such that $R[t_1(\theta)] = 0$ and $(1-t_1^2) f'_{t_1}(t_1)^2 = (1 - t_2^2) f'_{t_1}(\tau(t_1))^2 = \tan^2 \theta$. 
    Then, $\tilde{\mathbf{C}}_{n,k,\theta} := \text{graph} ( \rho f_{t_1(\theta)})$ is a capillary cone with contact angle $\theta$ and the topology described in Section~\ref{section:topology}, as desired.
\end{proof}
Finally, as discussed in the introduction, our methods give a short proof of the existence of embedded minimal hypertori $\bS^p \times \bS^p \times \bS^1 \hookrightarrow \bS^{2p+2}$ with an interpolation between the free-boundary and homogeneous one-phase solutions by capillary cones that produce every contact angle.
The $O(k) \times O(k)$-invariant one-phase cone in $\bR^{2k}$ was studied by Hong~\cite{hong-singular}.
\begin{proof}[Proof of Theorem~\ref{thm:carlotto-schulz}]
    For every $a>0$, let $f_a$ denote the solution of equation~\eqref{eqn:ode-star} with $n=2k$ and initial data $\{ f(\frac{1}{\sqrt{2}}) = a, f'(\frac{1}{\sqrt{2}}) = 0 \}$.
    For any such solution $f_a$ reaching a zero at $t_2 > \frac{1}{\sqrt{2}}$, the invariance of equation~\eqref{eqn:ode-star} (for $n=2k$) under the involution $t \leftrightsquigarrow \sqrt{1-t^2}$ allows us to reflect $f_a$ about its critical point at $\frac{1}{\sqrt{2}}$ into a smooth solution on an interval $[t_1,t_2]$, with $t_1^2 + t_2^2 = 1$.
    For this, we define $f_a(t) := f_a(\sqrt{1-t^2})$ which produces a smooth function on $[t_1,t_2]$.
    Moreover, the computation of Lemma~\ref{lemma:change-of-variables} shows that $(1-t_1^2) f'_a(t_1)^2 = (1 - t_2^2) f'_a(t_2)^2$, hence the graph of $\rho f_a(t)$ over $[t_1,t_2]$ defines a capillary cone of contact angle $\theta = \arctan ( a \sqrt{1-t_2^2})$. 
    
    For small $a>0$, rescaling $\frac{1}{a} f_a$ produces a solution of equation~\eqref{eqn:capillary-ODE} with parameter $\lambda = a^2 \downarrow 0$.
    As in the discussion of Proposition~\ref{prop:small-breather} and Corollary~\ref{cor:shooting-matching} (after~\cite{FTW-1}*{\S 3}), the rescaled functions converge uniformly in $C^{\infty}_{\text{loc}}$ to the solution $f_0$ of the linear problem
    \[
    \cL_{2k,k} f := (1-t^2) f''_0 + (f_0 - tf'_0) + 2(k-1) \bigl( f_0 - ( t - \tfrac{1}{2t}) f'_0 \bigr) = 0, \qquad \bigl\{ f_0(\tfrac{1}{\sqrt{2}}) = 1, f'_0(\tfrac{1}{\sqrt{2}}) = 0 \bigr\}.
    \]
    We prove that $f_0$ reaches a zero at $t<1$, hence all functions $f_a$ will have such a zero for small $a>0$.
    Let $F(t) := {}_2 F_1 ( \frac{2k-1}{2}, - \frac{1}{2} ; k ; t^2)$ be the solution of $\cL_{2k,k} f_0 = 0$ with initial data $\{ f_0(0) = 1, f'_0(0) = 0 \}$ (see also~\cite{FTW-stability-one-phase}*{\S 2}).
    The involution symmetry discussed above shows that the function $F(t) + F(\sqrt{1-t^2}) \in \ker \cL_{2k,k}$ as well, and has a critical point at $\frac{1}{\sqrt{2}}$, by symmetry; consequently,
    \[
    f_0(t) = \tfrac{1}{c} ( F(t) + F(\sqrt{1-t^2})), \qquad \text{for some } \; c > 0.
    \]
    Here, we used the fact that $F(\frac{1}{\sqrt{2}}) > 0$ by an established analysis of the generalized equation~\eqref{eqn:capillary-ODE} (see, for example,~\cite{FTW-1}*{Lemma 3.6}). 
    The hypergeometric function ${}_2F_1(a,b;c;s)$ has a pole $\sim (1-s)^{c-a-b}$, cf.~\cites{FTW-1, FTW-stability-one-phase}.
    In our situation, this implies that $F(t) \sim - (1-t)^{- \frac{n-4}{4}}$ for $n >4$ and $F(t) \sim \log (1-t)$ for $n=4$.
    In either case, we conclude that $f_0(t) \to - \infty$ as $t \uparrow 1$, hence $f_0(t)$ becomes zero in its domain of definition, and so does each $f_a$ for $a \in (0,\ve_0)$.

    On the other hand, if $a > \frac{2}{\sqrt{2k}}$, then Lemma~\ref{lemma:bound-at-sqrt} shows that the solution $f_a$ has finite-time blowup before reaching zero.
    Therefore, we have some 
    \[
    a^*_k := \sup \{ a > 0 : f_a \; \text{ reaches a zero} \} < \tfrac{2}{\sqrt{k}} < \infty.
    \]
    The same argument by continuity used in Corollary~\ref{cor:shooting-matching} shows that the solution $f_{a^*_k}$ reaches zero at a point $t_{a^*_k}$ where $f_{a^*_k}(t_{a^*_k}) = 0$ and $f'_{a^*_k}(t_{a^*_k}) = - \infty$.
    Consequently, each $\bar{\mathbf{C}}_{k,a} := \text{graph} ( \rho f_a(t))$ for $a \leq a^*_k$ is a capillary minimal cone.
    The above discussion shows that the rescaled cones $\frac{1}{a} \bar{\mathbf{C}}_{k,a}$ converge to $\text{graph} ( \rho f_0(t))$, as claimed; in particular, their contact angles $\theta(a)$ satisfy $\theta(a) \asymp a$ for small $a$.
    The smooth dependence theorem for solutions of ODE in terms of their initial data shows that the association $a \mapsto f_a$ is $C^{\infty}_{\text{loc}}(0,a^*_k)$, so the map $a \mapsto \bar{\mathbf{C}}_{k,a}$ produces a smooth family of cones.
    Since $\theta(a^*_k) = \frac{\pi}{2}$, we conclude that the map $a \mapsto \theta(a)$ is surjective on $(0,\frac{\pi}{2}]$, hence the cones $\bar{\mathbf{C}}_{k,a}$ produce all capillary angles.
\end{proof}

Carlotto-Schulz~\cite{carlotto-schulz} additionally proved that their methods extended to the construction of a minimal torus $\bT^4 = \bS^1 \times \bS^1 \times \bS^1 \times \bS^1$ in the $5$-sphere (more generally, a minimally embedded $\bS^{n-1} \times \bS^{n-1} \times \bS^{n-1} \times \bS^1$ in the round sphere $\bS^{3n-1}$) conjectured by Hsiang-Hsiang~\cite{hsiang-hsiang}. 
It would be interesting to produce more general examples of minimal surfaces with four spherical factors by applying the techniques developed in the present paper.

\nocite{*}
\bibliography{ref}

\end{document}